\numberwithin{equation}{section}
\numberwithin{figure}{section}
\theoremstyle{plain}
\newtheorem{thm}{\protect\theoremname}
  \theoremstyle{plain}
  \newtheorem{lem}[thm]{\protect\lemmaname}
  \theoremstyle{plain}
  \newtheorem{prop}[thm]{\protect\propositionname}
  \theoremstyle{remark}
  \newtheorem{rem}[thm]{\protect\remarkname}
  \theoremstyle{definition}
  \newtheorem{defn}[thm]{\protect\definitionname}
  \theoremstyle{plain}
  \newtheorem{cor}[thm]{\protect\corollaryname}
\newcommand{\xyR}[1]{\xydef@\xymatrixrowsep@{#1}}
\newcommand{\xyC}[1]{\xydef@\xymatrixcolsep@{#1}}
\newcommand{\Red}{\mathsf{Red}}
\newcommand{\Ext}{\mathsf{Ext}}
\newcommand{\Top}{\mathsf{Top}}
\newcommand{\CCAT}{\mathsf{CCat(0)}}
\newcommand{\CAT}{\mathsf{Cat(0)}}
\newcommand{\HV}{\mathsf{HV}}
\renewcommand{\C}{\mathsf{C}}
\newcommand{\A}{\mathsf{A}}
\newcommand{\F}{\mathsf{F}}
\newcommand{\Bun}{\mathsf{Bun}}
\newcommand{\Fil}{\mathsf{Fil}}
\newcommand{\Vect}{\mathsf{Vect}}
\newcommand{\Rep}{\mathsf{Rep}}
\newcommand{\Norm}{\mathsf{Norm}}
\newcommand{\Gr}{\mathrm{Gr}}
\newcommand{\rank}{\mathrm{rank}}
\newcommand{\Hom}{\mathrm{Hom}}
\newcommand{\End}{\mathrm{End}}
\newcommand{\Gal}{\mathrm{Gal}}
\newcommand{\Aut}{\mathrm{Aut}}
\newcommand{\Sym}{\mathrm{Sym}}
\newcommand{\loc}{\mathrm{loc}}
\newcommand{\Spec}{\mathrm{Spec}}
\newcommand{\Jump}{\mathrm{Jump}}
\newcommand{\Ji}{\mathrm{Ji}}
\newcommand{\Atom}{\mathrm{Atom}}
\newcommand{\height}{\mathrm{height}}
\newcommand{\length}{\mathrm{length}}
\newcommand{\im}{\mathrm{im}}
\newcommand{\coim}{\mathrm{coim}}
\newcommand{\coker}{\mathrm{coker}}
\newcommand{\Sub}{\mathrm{Sub}}
\newcommand{\sk}{\mathrm{sk}}
\newcommand{\eqd}{\stackrel{\mathrm{def}}{=}}
  \providecommand{\corollaryname}{Corollary}
  \providecommand{\definitionname}{Definition}
  \providecommand{\lemmaname}{Lemma}
  \providecommand{\propositionname}{Proposition}
  \providecommand{\remarkname}{Remark}
\providecommand{\theoremname}{Theorem}
\begin{document}

\title{On Harder-Narasimhan filtrations and their compatibility with tensor
products}

\author{Christophe Cornut}

\maketitle
\tableofcontents{}

\section{Introduction}

The Harder-Narasimhan formalism, as set up for instance by André in~\cite{An09},
requires a category $\C$ with kernels and cokernels, along with rank
and degree functions 
\[
\rank:\sk\,\C\rightarrow\mathbb{N}\quad\mbox{and}\quad\deg:\sk\,\C\rightarrow\mathbb{R}
\]
on the skeleton of $\C$, subject to various axioms. It then functorially
equips every object $X$ of $\C$ with a Harder-Narasimhan filtration
$\mathcal{F}_{HN}(X)$ by strict subobjects. This categorical formalism
is very nice and useful, but it does not say much about what $\mathcal{F}_{HN}(X)$
really is. The build-in characterization of this filtration only involves
the restriction of the rank and degree functions to the poset $\Sub(X)$
of strict subobjects of $X$, and a first aim of this paper is to
pin down the relevant formalism.

André's axioms on $(\C,\rank)$ imply that the poset $\Sub(X)$ is
a modular lattice of finite length \cite{Gr11}. Thus, starting in
section~\ref{sec:HN4ModLatt} with an arbitrary modular lattice $\mathcal{X}$
of finite length, we introduce a space $\mathbf{F}(\mathcal{X})$
of $\mathbb{R}$-filtrations on $\mathcal{X}$. This looks first like
a combinatorial object with building-like features: apartments, facets
and chambers. The choice of a rank function on $\mathcal{X}$ equips
$\mathbf{F}(\mathcal{X})$ with a distance $d$, and we show that
$(\mathbf{F}(\mathcal{X}),d)$ is a complete, CAT(0)-metric space,
whose underlying topology and geodesic segments do not depend upon
the chosen rank function. The choice of a degree function on $\mathcal{X}$
amounts to the choice of a concave function on $\mathbf{F}(\mathcal{X})$,
and we show that a closely related continuous function has a unique
minimum $\mathcal{F}\in\mathbf{F}(\mathcal{X})$: this is the Harder-Narasimhan
filtration for the triple $(\mathcal{X},\rank,\deg)$. The fact that
modular lattices provide a natural framework for the Harder-Narasimhan
theory was discovered independently by Hugues Randriambololona, see~\cite[\S 1]{Ra16}.

In section~\ref{sec:HN4Cat}, we derive our own Harder-Narasimhan
formalism for categories from this Harder-Narasimhan formalism for
modular lattices. It differs slightly from André's: we are perhaps
a bit more flexible in our axioms on $\C$, but a bit more demanding
in our axioms for the rank and degree functions. 

When the category $\C$ is also equipped with a $k$-linear tensor
product, is the Harder-Narasimhan filtration compatible with this
auxiliary structure? Many cases have already been considered and solved
by ad-hoc methods, often building on Totaro's pioneering work \cite{To96},
which itself relied on tools borrowed from Mumford's Geometric Invariant
Theory \cite{MuFoKi94}. Trying to understand and generalize the latest
installment of this trend \cite{LeWE16}, we came up with some sort
of axiomatized version of its overall strategy in which the GIT tools
are replaced by tools from convex metric geometry. This is exposed
in section~\ref{sec:HN4QT}, which gives a numerical criterion for
the compatibility of HN-filtrations with various tensor product constructions.
Our approach simultaneously yields some results towards exactness
of HN-filtrations, which classically required separate proofs, often
using Haboush's theorem \cite{Ha75}.

In the last section, we verify our criterion in three cases (which
could be combined as explained in section~\ref{subsec:FiberProductGood}):
filtered vector spaces (\ref{subsec:FilteredVectorSpaces}), normed
vector spaces (\ref{subsec:NormedVectorSpaces}) and normed $\varphi$-modules
(\ref{subsec:NormedPhiModules}). The first case has been known for
some times, see for instance~\cite{DaOrRa10}. The second case seems
to be new, and it applies for instance to the isogeny category of
sthukas with one paw, as considered in Scholze's Berkeley course or
in \cite{BaMoSc16}. The third case is a mild generalization of \cite[3.1.1]{LeWE16}.

\thanks{I would like to thank Brandon Levin and Carl Wang-Erickson for their
explanations on \cite{LeWE16}. In my previous attempts to deal with
the second and third of the above cases, a key missing step was part
$(3)$ of the proof of proposition~\ref{prop:CompBusemProj}. The
related statement appears to be lemma~$3.6.6$ of \cite{LeWE16}.
Finally, I would like to end this introduction with a question: in
all three cases, the semi-stable objects of slope $0$ form a full
subcategory $\C^{0}$ of $\C$ which is a neutral $k$-linear tannakian
category. }

\thanks{\begin{center}
What are the corresponding Tannaka groups?
\par\end{center}}

\section{The Harder-Narasimhan formalism for modular lattices\label{sec:HN4ModLatt}}

\subsection{Basic notions}

We refer to \cite{Gr11} for all things pertaining to basic lattice
theory.

\subsubsection{~\label{subsec:definitions4lattices}}

A \emph{lattice} is a partially ordered set (a poset) $(X,\leq)$
such that every pair of elements $(x,y)\in X$ has a \emph{meet} $x\vee y:=\sup\{x,y\}$
and a \emph{join} $x\wedge y:=\inf\{x,y\}$. It is \emph{bounded}
if it has both a minimal element $0_{X}$ and a maximal element $1_{X}$.
It is \emph{distributive} (resp.~\emph{modular}) if and only if $x\wedge(y\vee z)=(x\wedge y)\vee(x\wedge z)$
for all $x,y,z\in X$ (resp.~for all $x,y,z\in X$ with $z\leq x$).
A \emph{subposet} of $X$ is a subset equipped with the induced partial
order, a \emph{sublattice} is a subposet stable under the meet and
join operators of $X$, and a \emph{chain} in $X$ is a totally ordered
subposet. A chain of \emph{length} $\ell$ is a finite chain of order
$\ell+1$ and the \emph{length} of $X$ is the supremum of the length
of its finite chains (with values in $\mathbb{N}\cup\{\infty\}$).
An element $x$ of a bounded lattice $X$ is \emph{join-irreducible}
if $x\neq0_{X}$ and $x=y\vee z$ implies $x=y$ or $x=z$; it is
an \emph{atom} if $x\neq0_{X}$ and $y\leq x$ implies $y=0_{X}$
or $y=x$. We denote by $\Atom(X)\subset\Ji(X)$ the set of atoms
and join-irreducible elements of $X$. A \emph{complement} of $x$
is an element $y$ of $X$ such that $x\wedge y=0_{X}$ and $x\vee y=1_{X}$.
A \emph{complemented} lattice is a bounded lattice in which every
element as a complement. A \emph{boolean} lattice is a complemented
distributive lattice. A non-decreasing map between bounded lattices
is a \emph{lattice map} (resp.~a $\{0,1\}$-\emph{map}) if it is
compatible with the meet and join operators (resp.~with the minimal
and maximal elements). For $x\leq y$ in $X$, we denote by $[x,y]$
or $\frac{y}{x}$ the subposet $\{z\in X:x\leq z\leq y\}$ of $X$. 

\subsubsection{~}

Let $X$ be a fixed bounded modular lattice of finite length $r$.
An \emph{apartment} in $X$ is a maximal distributive sublattice $S$
of $X$. Any such $S$ is finite \cite[Theorem 4.28]{Ro08}, of length
$r$ \cite[Corollary 2]{Le96}, with also $\left|\Ji(S)\right|=r$
by \cite[Corollary 108]{Gr11}. The formula $c_{i}=c_{i-1}\wedge s_{i}$
yields a bijection between the set of all maximal chains $C=\{c_{0}<\cdots<c_{r}\}$
in $S$ and the set of all bijections $i\mapsto s_{i}$ from $\{1,\cdots,r\}$
to $\Ji(S)$ whose inverse $s_{i}\mapsto i$ is non-decreasing. The
theorem of Birkhoff and Dedekind \cite[Theorem 363]{Gr11} asserts
that any two chains in $X$ are contained in some apartment.

\subsubsection{~}

A \emph{degree function} on $X$ is a function $\deg:X\rightarrow\mathbb{R}$
such that 
\[
\deg(0_{X})=0\quad\mbox{and}\quad\deg(x\vee y)+\deg(x\wedge y)\geq\deg(x)+\deg(y)
\]
for every $x$, $y$ in $X$. We say that it is \emph{exact} if also
$-\deg$ is a degree function, i.e.
\[
\deg(x\vee y)+\deg(x\wedge y)=\deg(x)+\deg(y)
\]
for every $x$, $y$ in $X$. A \emph{rank function} on $X$ is an
increasing exact degree function. Thus a rank function on $X$ is
a function $\rank:X\rightarrow\mathbb{R}_{+}$ such that $\rank(0_{X})=0$,
\[
\rank(x\vee y)+\rank(x\wedge y)=\rank(x)+\rank(y)
\]
for every $x$, $y$ in $X$ and $\rank(x)<\rank(y)$ if $x<y$. The
\emph{standard rank function} is given by $\rank(x)=\height(x)$,
the length of any maximal chain in $[0_{X},x]$. 

\subsubsection{~}

For a chain $C=\{c_{0}<\cdots<c_{s}\}$ in $X$, set 
\[
\Gr_{C}^{\bullet}\eqd\prod_{i=1}^{s}\Gr_{C}^{i}\quad\mbox{with}\quad\Gr_{C}^{i}\eqd[c_{i-1},c_{i}].
\]
For the direct product partial order on $\Gr_{C}^{\bullet}$ defined
by 
\[
(x_{1},\cdots,x_{s})\leq(y_{1},\cdots,y_{s})\stackrel{\mathrm{def}}{\iff}\forall i\in\{1,\cdots,s\}:\quad x_{i}\leq y_{i},
\]
this is again plainly a modular lattice of finite length $\leq r$,
which is even a finite boolean lattice of length $r$ if $C$ is maximal.
We denote by $\varphi_{C}:X\rightarrow\Gr_{C}^{\bullet}$ the non-decreasing
$\{0,1\}$-map which sends $x\in X$ to $\varphi_{C}(x)=((x\wedge c_{i})\vee c_{i-1})_{i=1}^{s}$.
The restriction of $\varphi_{C}$ to any apartment containing $C$
is a lattice $\{0,1\}$-map. 

\subsubsection{~\label{subsec:GrC}}

For $\deg:X\rightarrow\mathbb{R}$, $\rank:X\rightarrow\mathbb{R}_{+}$
and $C$ as above, we still denote by 
\[
\deg:\Gr_{C}^{\bullet}\rightarrow\mathbb{R}\quad\mbox{and}\quad\rank:\Gr_{C}^{\bullet}\rightarrow\mathbb{R}_{+}
\]
the induced degree and rank functions on $\Gr_{C}^{\bullet}$ defined
by 
\begin{eqnarray*}
\deg\left((z_{i})_{i=1}^{s}\right) & \eqd & \sum_{i=1}^{s}\deg(z_{i})-\deg(c_{i-1})\\
\rank\left((z_{i})_{i=1}^{s}\right) & \eqd & \sum_{i=1}^{s}\rank(z_{i})-\rank(c_{i-1})
\end{eqnarray*}
for $z_{i}\in\Gr_{C}^{i}=[c_{i-1},c_{i}]$. If $C$ is a $\{0,1\}$-chain,
i.e.~$c_{0}=0_{X}$ and $c_{s}=1_{X}$, then 
\[
\deg(x)\leq\deg\left(\varphi_{C}(x)\right)\quad\mbox{and}\quad\rank(x)=\rank\left(\varphi_{C}(x)\right)
\]
for every $x$ in $X$. Indeed since $x\wedge c_{i-1}=(x\wedge c_{i})\wedge c_{i-1}$
for all $i\in\{1,\cdots,s\}$, 
\[
\begin{array}{ccc}
\underbrace{{\textstyle \sum_{i=1}^{s}}\deg(x\wedge c_{i})-\deg(x\wedge c_{i-1})} & \leq & \underbrace{{\textstyle \sum_{i=1}^{s}}\deg((x\wedge c_{i})\vee c_{i-1})-\deg(c_{i-1})}\\
=\deg(x) &  & =\deg\left(\varphi_{C}(x)\right)
\end{array}
\]
with equality if and only if for every $i\in\{1,\cdots,s\}$, 
\[
\deg(x\wedge c_{i})+\deg(c_{i-1})=\deg\left((x\wedge c_{i})\vee c_{i-1}\right)+\deg(x\wedge c_{i-1}).
\]
This occurs for instance if $\deg$ is exact on the sublattice of
$X$ spanned by $C\cup\{x\}$. 

\subsubsection{~}

In particular, a rank function on $X$ is uniquely determined by its
values on any maximal chain $C=\{c_{0}<\cdots<c_{r}\}$ of $X$. Indeed
for every $x\in X$,
\[
\rank(x)=\sum_{\begin{subarray}{c}
i\in\{1,\cdots,r\}\\
(x\wedge c_{i})\vee c_{i-1}=c_{i}
\end{subarray}}\rank(c_{i})-\rank(c_{i-1}).
\]
If $C$ is a maximal chain in $X$, the degree map on $\Gr_{C}^{\bullet}$
is exact and 
\[
\deg(x)\leq\sum_{\begin{subarray}{c}
i\in\{1,\cdots,r\}\\
(x\wedge c_{i})\vee c_{i-1}=c_{i}
\end{subarray}}\deg(c_{i})-\deg(c_{i-1})
\]
for every $x\in X$. In particular, $\deg:X\rightarrow\mathbb{R}$
is bounded above.

\subsubsection{~\label{subsec:RankImpliesModular}}

We started with a modular lattice of finite length, but the definition
of a rank function makes sense for an arbitrary bounded lattice $X$.
We claim that:
\begin{lem}
\label{lem:CaractModularbyExistRank}A bounded lattice $X$ is modular
of finite length if and only if it has an integer-valued rank function
$\rank:X\rightarrow\mathbb{N}$. 
\end{lem}
\begin{proof}
One direction is obvious: if $X$ is modular of finite length, then
the standard rank function $\height:X\rightarrow\mathbb{N}$ works.
Suppose conversely that $\rank:X\rightarrow\mathbb{N}$ is a rank
function. Then $\rank(1_{X})$ bounds the length of any chain in $X$,
thus $X$ already has finite length. For modularity, we have to show
that for every $a,b,c\in X$ with $a\leq c$, $(a\vee b)\wedge c=a\vee(b\wedge c).$
Replacing $c$ by $c'=(a\vee b)\wedge c$, we may assume that $a\leq c\leq a\vee b$,
thus $a\vee b=c\vee b$. Replacing $a$ by $a'=a\vee(b\wedge c)$,
we may assume that also $a\wedge b=c\wedge b$. In other words, we
have to show that if $a\leq c$, $a\wedge b=c\wedge b$ and $a\vee b=c\vee b$,
then $a=c$. But these assumptions imply that 
\[
\begin{array}{rcccl}
\rank(a)+\rank(b) & = & \rank(a\vee b)+\rank(a\wedge b)\\
 & = & \rank(c\vee b)+\rank(c\wedge b) & = & \rank(c)+\rank(b)
\end{array}
\]
thus $\rank(a)=\rank(c)$ and indeed $a=c$ since otherwise $\rank(a)<\rank(c)$.
\end{proof}

\subsubsection{~}

An apartment $S$ of $X$ is \emph{special} if $S$ is a (finite)
boolean lattice.
\begin{lem}
\label{lem:XcomplImpliesExistsSpecial}Suppose that $X$ is complemented.
Then any chain $C$ in $X$ is contained in a special apartment $S$
of $X$. 
\end{lem}
\begin{proof}
Indeed, we may assume that $C=\{c_{0}<\cdots<c_{r}\}$ is maximal.
Since $X$ is complemented, an induction on the length $r$ of $X$
shows that there is another maximal chain $C'=\{c_{0}^{\prime}<\cdots<c_{r}^{\prime}\}$
in $X$ such that $c_{r-i}^{\prime}$ is a complement of $c_{i}$
for all $i\in\{0,\cdots,r\}$ \textendash{} we then say that $C'$
is \emph{opposed} to $C$. We claim that any apartment $S$ of $X$
containing $C$ and $C'$ is special. Indeed, if $\Ji(S)=\{x_{1},\cdots,x_{r}\}$
with $c_{i}=c_{i-1}\vee x_{i}$ for all $i\in\{1,\cdots,r\}$, then
$c_{i}^{\prime}=c_{i-1}^{\prime}\vee x_{r+1-i}$ for all $i\in\{1,\cdots,r\}$,
thus $x_{i}\mapsto i$ and $x_{i}\mapsto r+1-i$ are non-decreasing
bijections $\Ji(S)\rightarrow\{1,\cdots,r\}$, so $\Ji(S)$ is unordered
and $S$ is indeed boolean by \cite[II.1.2]{Gr11}.
\end{proof}

\subsection{$\mathbb{R}$-filtrations}

Let again $X$ be a modular lattice of finite length $r$. 

\subsubsection{~}

An $\mathbb{R}$\emph{-filtration} on $X$ is a function $f:\mathbb{R}\rightarrow X$
which is non-increasing, exhaustive, separated and left continuous:
$f(\gamma_{1})\geq f(\gamma_{2})$ for $\gamma_{1}\leq\gamma_{2}$,
$f(\gamma)=1_{X}$ for $\gamma\ll0$, $f(\gamma)=0_{X}$ for $\gamma\gg0$
and $f(\gamma)=\inf\{f(\eta):\eta<\gamma\}$ for $\gamma\in\mathbb{R}$.
We set
\[
f_{+}(\gamma)\eqd\sup\left\{ f(\eta):\eta>\gamma\right\} \leq f(\gamma)\quad\mbox{and}\quad\Gr_{f}^{\gamma}\eqd\left[f_{+}(\gamma),f(\gamma)\right].
\]
Note that $f_{+}(\gamma)$ is indeed well-defined since $f(\mathbb{R})$
is a (finite) chain in $X$. Equivalently, an $\mathbb{R}$-filtration
on $X$ is a pair $(C,\underline{\gamma})$ where $C=\{c_{0}<\cdots<c_{s}\}$
is a $\{0,1\}$-chain in $X$ (i.e.~with $c_{0}=0_{X}$, $c_{s}=1_{X}$)
and $\underline{\gamma}=(\gamma_{1}>\cdots>\gamma_{s})$ is a decreasing
sequence in $\mathbb{R}$. The correspondence $f\leftrightarrow(C,\underline{\gamma})$
is characterized by 
\[
C=F(f)\eqd f(\mathbb{R})\quad\mbox{and}\quad\underline{\gamma}=\Jump(f)\eqd\left\{ \gamma\in\mathbb{R}:\Gr_{f}^{\gamma}\neq0\right\} ,
\]
where $\Gr_{f}^{\gamma}\neq0$ means $f_{+}(\gamma)\neq f(\gamma)$.
Thus for every $\gamma\in\mathbb{R}$, 
\[
f(\gamma)=\begin{cases}
c_{0}=0_{X} & \mbox{for }\gamma>\gamma_{1},\\
c_{i} & \mbox{for }\gamma_{i+1}<\gamma\leq\gamma_{i},\,i\in\{1,\cdots,s-1\},\\
c_{s}=1_{X} & \mbox{for }\gamma\leq\gamma_{s}.
\end{cases}
\]

\subsubsection{~}

We denote by $\mathbf{F}(X)$ the set of all $\mathbb{R}$-filtrations
on $X$. We say that $f,f'\in\mathbf{F}(X)$ are in the same \emph{facet}
if $F(f)=F(f')$. We write $F^{-1}(C)\eqd\{f:f(\mathbb{R})=C\}$ for
the facet defined by a chain $C$; thus $\Jump$ yields a bijection
from $F^{-1}(C)$ to
\[
\mathbb{R}_{>}^{s}\eqd\left\{ (\gamma_{1},\cdots,\gamma_{s})\in\mathbb{R}^{s}:\gamma_{1}>\cdots>\gamma_{s}\right\} ,\quad s=\mbox{length}(C).
\]
The \emph{closed facet} of $C$ is $\mathbf{F}(C)=\{f:f(\mathbb{R})\subset C\}$,
isomorphic to 
\[
\mathbb{R}_{\geq}^{s}\eqd\left\{ (\gamma_{1},\cdots,\gamma_{s})\in\mathbb{R}^{s}:\gamma_{1}\geq\cdots\geq\gamma_{s}\right\} .
\]
We call \emph{chambers} (open or closed) the facets of the maximal
$C$'s. 

\subsubsection{~}

For any $\mu\in\mathbb{R}$, we denote by $X(\mu)$ the unique element
of $F^{-1}(\{0_{X},1_{X}\})$ such that $\Jump(X(\mu))=\mu$, i.e.~$X(\mu)(\gamma)=1_{X}$
for $\gamma\leq\mu$ and $X(\mu)(\gamma)=0_{X}$ for $\gamma>\mu$.
We define a scalar multiplication and a symmetric addition map 
\[
\mathbb{R}_{+}\times\mathbf{F}(X)\rightarrow\mathbf{F}(X)\quad\mbox{and}\quad\mathbf{F}(X)\times\mathbf{F}(X)\rightarrow\mathbf{F}(X)
\]
by the following formulas: for $\lambda>0$, $f,g\in\mathbf{F}(X)$
and $\gamma\in\mathbb{R}$, 
\[
(\lambda\cdot f)(\gamma)\eqd f(\lambda^{-1}\gamma)\quad\mbox{and}\quad(f+g)(\gamma)\eqd\bigvee\left\{ f(\gamma_{1})\wedge g(\gamma_{2}):\gamma_{1}+\gamma_{2}=\gamma\right\} ,
\]
while for $\lambda=0$, we set $0\cdot f=X(0)$. Note that the formula
defining $f+g$ indeed makes sense since $f(\mathbb{R})$ and $g(\mathbb{R})$
are finite. One checks easily that 
\begin{eqnarray*}
X(\mu_{1})+X(\mu_{2}) & = & X(\mu_{1}+\mu_{2})\\
\lambda\cdot X(\mu) & = & X(\lambda\mu)\\
\lambda\cdot(f+g) & = & \lambda\cdot f+\lambda\cdot g\\
\mbox{and}\quad(f+X(\mu))(\gamma) & = & f(\gamma-\mu)
\end{eqnarray*}
for every $\mu_{1},\mu_{2},\mu\in\mathbb{R}$, $\lambda\in\mathbb{R}_{+}$,
$f,g\in\mathbf{F}(X)$ and $\gamma\in\mathbb{R}$. 

\subsubsection{Examples\label{subsec:Examples}}

If $(X,\leq)=\{c_{0}<\cdots<c_{r}\}$ is a finite chain, the formula
\[
f_{i}^{\sharp}\eqd\sup\left\{ \gamma\in\mathbb{R}:c_{i}\leq f(\gamma)\right\} 
\]
yields a bijection $f\mapsto f^{\sharp}$ between $\left(\mathbf{F}(X),\cdot,+\right)$
and the closed cone 
\[
\mathbb{R}_{\geq}^{r}\eqd\left\{ (\gamma_{1},\cdots,\gamma_{r})\in\mathbb{R}^{r}:\gamma_{1}\geq\cdots\geq\gamma_{r}\right\} .
\]
Note that the left continuity of $f$ implies that for all $i\in\{1,\cdots,r\}$,
also 
\[
f_{i}^{\sharp}=\max\left\{ \gamma\in\mathbb{R}:c_{i}\leq f(\gamma)\right\} .
\]
More generally if $(X,\leq)$ is a finite distributive lattice (and
thus also a bounded modular lattice of finite length, so that $\mathbf{F}(X)$
is well-defined), the formula
\begin{eqnarray*}
f^{\sharp}(x) & \eqd & \sup\left\{ \gamma:x\leq f(\gamma)\right\} \\
 & = & \max\left\{ \gamma:x\leq f(\gamma)\right\} 
\end{eqnarray*}
yields a bijection $f\mapsto f^{\sharp}$ between $\left(\mathbf{F}(X),\cdot,+\right)$
and the cone of all non-increasing functions $f^{\sharp}:\Ji(X)\rightarrow\mathbb{R}$,
where $\Ji(X)\subset X$ is the subposet of all join-irreducible elements
of $X$ (compare with \cite[II.1.3]{Gr11}). The inverse bijection
is given by 
\[
f(\gamma)=\bigvee\left\{ x\in\Ji(X):f^{\sharp}(x)\geq\gamma\right\} .
\]
In particular if $(X,\leq)$ is a finite boolean lattice, $\Ji(X)=\Atom(X)$
is the unordered finite set of atoms in $X$ and the above formula
yields a bijection between $\left(\mathbf{F}(X),\cdot,+\right)$ and
the finite dimensional $\mathbb{R}$-vector space of all functions
$\Atom(X)\rightarrow\mathbb{R}$. 

\subsubsection{Functoriality}

Let $\varphi:X\rightarrow Y$ be a non-decreasing $\{0,1\}$-map between
bounded modular lattices of finite length. Then $\varphi$ induces
a map 
\[
\mathbf{F}(\varphi):\mathbf{F}(X)\rightarrow\mathbf{F}(Y),\qquad f\mapsto\varphi\circ f.
\]
Plainly for every $\mu\in\mathbb{R}$, $\lambda\in\mathbb{R}_{+}$
and $f\in\mathbf{F}(X)$, 
\[
\mathbf{F}(\varphi)(X(\mu))=Y(\mu)\quad\mbox{and}\quad\mathbf{F}(\varphi)(\lambda\cdot f)=\lambda\cdot\mathbf{F}(\varphi)(f).
\]
If moreover $\varphi$ is a lattice map, i.e.~if it is compatible
with the meet and join operations on $X$ and $Y$, then $\mathbf{F}(\varphi)$
is also compatible with the addition maps:
\[
\mathbf{F}(\varphi)(f+g)=\mathbf{F}(\varphi)(f)+\mathbf{F}(\varphi)(g).
\]

\subsubsection{~}

An \emph{apartment} of $\mathbf{F}(X)$ is a subset of the form $\mathbf{F}(S)$,
where $S$ is an apartment of $X$, i.e.~a maximal distributive sublattice
of $X$. Thus $\left(\mathbf{F}(S),\cdot,+\right)$ is isomorphic
to the cone of non-increasing maps $\Ji(S)\rightarrow\mathbb{R}$
by~\ref{subsec:Examples}. The map $S\mapsto\mathbf{F}(S)$ is a
bijection between apartments in $X$ and $\mathbf{F}(X)$. The apartment
$\mathbf{F}(S)$ is a finite disjoint union of facets of $\mathbf{F}(X)$,
indexed by the $\{0,1\}$-chains in $S$. By~\cite[Theorem 363]{Gr11},
for any $f,g\in\mathbf{F}(X)$, there is an apartment $\mathbf{F}(S)$
which contains $f$ and $g$.

We also write $0\in\mathbf{F}(X)$ for the trivial $\mathbb{R}$-filtration
$X(0)$ on $X$. It is a neutral element for the addition map on $\mathbf{F}(X)$.
More precisely, for every $f,g\in\mathbf{F}(X)$, $f+g=f$ if and
only if $g=0$: this follows from a straightforward computation in
any apartment $\mathbf{F}(S)$ containing $f$ and $g$. We say that
two $\mathbb{R}$-filtrations $f$ and $f'$ are \emph{opposed} if
$f+f'=0$. If $f$ belongs to a special apartment $\mathbf{F}(S)$
(i.e.~one with $S$ boolean), then there is a unique $f'\in\mathbf{F}(S)$
which is opposed to $f$. Thus if $X$ is complemented, any $f\in\mathbf{F}(X)$
has at least one opposed $\mathbb{R}$-filtration by lemma~\ref{lem:XcomplImpliesExistsSpecial}.

\subsubsection{~}

For any chain $C$ in $X$, the $\{0,1\}$-map $\varphi_{C}:X\rightarrow\Gr_{C}^{\bullet}$
induces a map 
\[
r_{C}:\mathbf{F}(X)\rightarrow\mathbf{F}(\Gr_{C}^{\bullet}),\qquad r_{C}\eqd\mathbf{F}(\varphi_{C}).
\]
If $S$ is an apartment of $X$ which contains $C$, the restriction
of $\varphi_{C}$ to $S$ is a lattice $\{0,1\}$-map and the restriction
of $r_{C}$ to $\mathbf{F}(S)$ is compatible with the addition maps. 

If $C$ is maximal, then $\Gr_{C}^{\bullet}=\prod_{i=1}^{r}\Gr_{C}^{i}$
is a finite boolean lattice and 
\[
\Atom(\Gr_{C}^{\bullet})=\{c_{1}^{\ast},\cdots,c_{r}^{\ast}\}
\]
with $c_{i}^{\ast}$ corresponding to the atom $c_{i}$ of $\Gr_{C}^{i}=\{c_{i-1},c_{i}\}$.
For $C\subset S\subset X$ as above, the $\{0,1\}$-lattice map $\varphi_{C}\vert_{S}:S\rightarrow\Gr_{C}^{\bullet}$
then induces a bijection 
\[
\Ji(\varphi_{C}\vert_{S}):\Atom(\Gr_{C}^{\bullet})=\Ji(\Gr_{C}^{\bullet})\stackrel{\simeq}{\longrightarrow}\Ji(S)
\]
mapping $c_{i}^{\ast}$ to $s_{i}$, characterized by $c_{i}=c_{i-1}\vee s_{i}$
for all $i\in\{1,\cdots,r\}$. Then
\[
r_{C}:\mathbf{F}(S)\rightarrow\mathbf{F}(\Gr_{C}^{\bullet})
\]
maps a non-increasing function $f^{\sharp}:\Ji(S)\rightarrow\mathbb{R}$
to the corresponding function $f^{\sharp}\circ\Ji(\varphi_{C}\vert_{S}):\Atom(\Gr_{C}^{\bullet})\rightarrow\mathbb{R}$.
In particular, it is injective.

\subsubsection{~\label{subsec:typemapdef}}

The rank function $\height:X\rightarrow\{0,\cdots,r\}$ is a non-decreasing
$\{0,1\}$-map, it thus induces a function $\mathbf{t}:=\mathbf{F}(\height)$
which we call the \emph{type map}: 
\[
\mathbf{t}:\mathbf{F}(X)\rightarrow\mathbf{F}(\{0,\cdots,r\})=\mathbb{R}_{\geq}^{r}.
\]
The restriction of $\mathbf{t}$ to an apartment $\mathbf{F}(S)$
maps $f^{\sharp}:\Ji(S)\rightarrow\mathbb{R}$ to 
\[
\mathbf{t}(f^{\sharp})=(\gamma_{1}\geq\cdots\geq\gamma_{r})\quad\mbox{with}\quad\left|\left\{ i:\gamma_{i}=\gamma\right\} \right|=\left|\left\{ x:f^{\sharp}(x)=\gamma\right\} \right|.
\]
The restriction of $\mathbf{t}$ to a closed chamber $\mathbf{F}(C)$
is a cone isomorphism (i.e.~a bijection compatible with the scalar
operations and addition maps). 

\subsubsection{~\label{subsec:embeddingX2F(X)}}

The set $\mathbf{F}(X)$ is itself a lattice, with meet and join given
by 
\[
(f\wedge g)(\gamma)\eqd f(\gamma)\wedge g(\gamma)\quad\mbox{and}\quad(f\vee g)(\gamma)\eqd f(\gamma)\vee g(\gamma)
\]
for every $f,g\in\mathbf{F}(X)$ and $\gamma\in\mathbb{R}$. Moreover,
there is a natural lattice embedding 
\[
X\hookrightarrow\mathbf{F}(X),\qquad x\mapsto x(-)\quad\mbox{with}\quad x(\gamma)\eqd\begin{cases}
1_{X} & \mbox{if }\gamma\leq0,\\
x & \mbox{if }0<\gamma\leq1,\\
0_{X} & \mbox{if }1<\gamma.
\end{cases}
\]
It maps $0_{X}$ to $X(0)$ and $1_{X}$ to $X(1)$. Viewing $X$
as a sublattice of $\mathbf{F}(X)$, the addition map on $\mathbf{F}(X)$
sends $(x,y)\in X^{2}$ to the $\mathbb{R}$-filtration $x+y\in\mathbf{F}(X)$
given by 
\[
(x+y)(\gamma)=\begin{cases}
1_{X} & \mbox{if }\gamma\leq0,\\
x\vee y & \mbox{if }0<\gamma\leq1,\\
x\wedge y & \mbox{if }1<\gamma\leq2,\\
0_{X} & \mbox{if }2<\gamma.
\end{cases}
\]
For every $f\in\mathbf{F}(X)$ with $\Jump(f)\subset\{\gamma_{1},\cdots,\gamma_{N}\}$
where $\gamma_{1}<\cdots<\gamma_{N}$, we have
\[
f=\gamma_{1}\cdot1_{X}+\sum_{i=2}^{N}(\gamma_{i}-\gamma_{i-1})\cdot f(\gamma_{i}).
\]
Since the addition map on $\mathbf{F}(X)$ is not associative, the
above sum is a priori not well-defined. However, all of its summands
belong to the closed facet $\mathbf{F}(C)$ of $f$ (with $C=f(\mathbb{R})$),
and the formula is easily checked inside this commutative monoid.

\subsubsection{~\label{subsec:degfctXandF(X)}}

A \emph{degree function} on $\mathbf{F}(X)$ is a function $\left\langle \star,-\right\rangle :\mathbf{F}(X)\rightarrow\mathbb{R}$
such that for $\lambda\in\mathbb{R}_{+}$ and $f,g\in\mathbf{F}(X)$,
$(1)$ $\left\langle \star,\lambda f\right\rangle =\lambda\left\langle \star,f\right\rangle $,
$(2)$ $\left\langle \star,f+g\right\rangle \geq\left\langle \star,f\right\rangle +\left\langle \star,g\right\rangle $
and $(3)$ $\left\langle \star,f+g\right\rangle =\left\langle \star,f\right\rangle +\left\langle \star,g\right\rangle $
if $f(\mathbb{R})\cup g(\mathbb{R})$ is a chain. We claim that:
\begin{lem}
Restriction from $\mathbf{F}(X)$ to its sublattice $X\hookrightarrow\mathbf{F}(X)$
yields a bijection between degree functions on $\mathbf{F}(X)$ and
degree functions on $X$. 
\end{lem}
\begin{proof}
If $\left\langle \star,-\right\rangle :\mathbf{F}(X)\rightarrow\mathbb{R}$
is a degree function on $\mathbf{F}(X)$, then for any $x,y\in X$,
\[
\left\langle \star,x\vee y\right\rangle +\left\langle \star,x\wedge y\right\rangle \stackrel{(a)}{=}\left\langle \star,x\vee y+x\wedge y\right\rangle \stackrel{(b)}{=}\left\langle \star,x+y\right\rangle \stackrel{(c)}{\geq}\left\langle \star,x\right\rangle +\left\langle \star,y\right\rangle 
\]
using $(3)$ for $(a)$, the equality $x+y=x\vee y+x\wedge y$ in
$\mathbf{F}(X)$ for $(b)$, and $(2)$ for $(c)$. Since also $\left\langle \star,0_{X}\right\rangle =0$
by $(1)$, it follows that $x\mapsto\left\langle \star,x\right\rangle $
is a degree function on $X$: our map is thus well-defined. It is
injective since any function $\deg:X\rightarrow\mathbb{R}$ with $\deg(0_{X})=0$
has a unique extension to a function $\left\langle \star,-\right\rangle :\mathbf{F}(X)\rightarrow\mathbb{R}$
satisfying $(1)$ and $(3)$, which is given by the following formula:
for any $f\in\mathbf{F}(X)$, 
\[
\left\langle \star,f\right\rangle =\sum_{\gamma\in\mathbb{R}}\gamma\cdot\deg\left(\Gr_{f}^{\gamma}\right)\quad\mbox{with}\quad\Gr_{f}^{\gamma}=\left[f_{+}(\gamma),f(\gamma)\right]
\]
where $\deg\left([x,y]\right)=\deg(y)-\deg(x)$ for $x\leq y$ in
$X$. Equivalently, 
\[
\left\langle \star,f\right\rangle =\gamma_{1}\cdot\deg\left(1_{X}\right)+\sum_{i=2}^{N}(\gamma_{i}-\gamma_{i-1})\cdot\deg\left(f(\gamma_{i})\right)
\]
whenever $\Jump(f)\subset\{\gamma_{1},\cdots,\gamma_{N}\}$ with $\gamma_{1}<\cdots<\gamma_{N}$. 

It remains to establish that if we start with a degree function on
$X$, this unique extension also satisfies our concavity axiom $(2)$.
Note that the last formula for $\left\langle \star,f\right\rangle $
then shows that for any $\{0,1\}$-chain $C$ in $X$, 
\[
\left\langle \star,f\right\rangle \leq\left\langle \star,r_{C}(f)\right\rangle 
\]
with equality if the initial degree function is exact on the sublattice
of $X$ spanned by $C\cup f(\mathbb{R})$. Here $r_{C}(f)=\varphi_{C}\circ f$
in $\mathbf{F}(\Gr_{C}^{\bullet})$ and $\left\langle \star,-\right\rangle :\mathbf{F}(\Gr_{C}^{\bullet})\rightarrow\mathbb{R}$
is the extension, as defined above, of the degree function $\deg:\Gr_{C}^{\bullet}\rightarrow\mathbb{R}$
induced by our initial degree function on $X$. Now for $f,g\in\mathbf{F}(X)$,
pick an apartment $S$ of $X$ containing $f(\mathbb{R})\cup g(\mathbb{R})$
and a maximal chain $C\subset S$ containing $(f+g)(\mathbb{R})$.
Then 
\[
\left\langle \star,f+g\right\rangle =\left\langle \star,r_{C}(f+g)\right\rangle \quad\mbox{with}\quad r_{C}(f+g)=r_{C}(f)+r_{C}(g)
\]
since $\deg$ is exact on the chain $C\supset(f+g)(\mathbb{R})$ and
$f,g\in\mathbf{F}(S)$ with $C\subset S$. Since also $\left\langle \star,f\right\rangle \leq\left\langle \star,r_{C}(f)\right\rangle $
and $\left\langle \star,g\right\rangle \leq\left\langle \star,r_{C}(g)\right\rangle $,
it is sufficient to establish that 
\[
\left\langle \star,r_{C}(f)+r_{C}(g)\right\rangle \geq\left\langle \star,r_{C}(f)\right\rangle +\left\langle \star,r_{C}(g)\right\rangle .
\]
We may thus assume that $X$ is a finite Boolean lattice equipped
with an exact degree function, in which case the function $\left\langle \star,-\right\rangle :\mathbf{F}(X)\rightarrow\mathbb{R}$
is actually linear: 
\[
\left\langle \star,f\right\rangle =\sum_{a\in\Atom(X)}f^{\sharp}(a)\deg(a).
\]
This finishes the proof of the lemma.
\end{proof}

\subsection{Metrics}

Let now $\rank:X\rightarrow\mathbb{R}_{+}$ be a rank function on
$X$.

\subsubsection{~}

We equip $\mathbf{F}(X)$ with a symmetric pairing 
\[
\left\langle -,-\right\rangle :\mathbf{F}(X)\times\mathbf{F}(X)\rightarrow\mathbb{R},\quad\left\langle f_{1},f_{2}\right\rangle \eqd\sum_{\gamma_{1},\gamma_{2}\in\mathbb{R}}\gamma_{1}\gamma_{2}\cdot\rank\left(\Gr_{f_{1},f_{2}}^{\gamma_{1},\gamma_{2}}\right)
\]
with notations as above, where for any $f_{1},f_{2}\in\mathbf{F}(X)$
and $\gamma_{1},\gamma_{2}\in\mathbb{R}$, 
\[
\Gr_{f_{1},f_{2}}^{\gamma_{1},\gamma_{2}}\eqd\frac{f_{1}(\gamma_{1})\wedge f_{2}(\gamma_{2})}{\left(f_{1,+}(\gamma_{1})\wedge f_{2}(\gamma_{2})\right)\vee\left(f_{1}(\gamma_{1})\wedge f_{2,+}(\gamma_{2})\right)}.
\]
Note that with these definitions and for any $\lambda\in\mathbb{R}_{+}$,
\[
\left\langle \lambda f_{1},f_{2}\right\rangle =\lambda\left\langle f_{1},f_{2}\right\rangle =\left\langle f_{1},\lambda f_{2}\right\rangle .
\]
If $\Jump(f_{\nu})\subset\{\gamma_{1}^{\nu},\cdots,\gamma_{s_{\nu}}^{\nu}\}$
with $\gamma_{1}^{\nu}<\cdots<\gamma_{s_{\nu}}^{\nu}$ and $x_{j}^{\nu}=f_{\nu}(\gamma_{j}^{\nu})$
for $\nu\in\{1,2\}$, 
\begin{eqnarray*}
\left\langle f_{1},f_{2}\right\rangle  & = & \sum_{i=1}^{s_{1}}\sum_{j=1}^{s_{2}}\gamma_{i}^{1}\gamma_{j}^{2}\cdot\rank\left(\frac{x_{i}^{1}\wedge x_{j}^{2}}{\left(x_{i+1}^{1}\wedge x_{j}^{2}\right)\vee\left(x_{i}^{1}\wedge x_{j+1}^{2}\right)}\right)
\end{eqnarray*}
with the convention that $x_{s_{\nu}+1}^{\nu}=0_{X}$. Thus with $r_{i,j}=\rank\left(x_{i}^{1}\wedge x_{j}^{2}\right)$,
also 
\begin{eqnarray*}
\left\langle f_{1},f_{2}\right\rangle  & = & \sum_{i=1}^{s_{1}}\sum_{j=1}^{s_{2}}\gamma_{i}^{1}\gamma_{j}^{2}\left(r_{i,j}-r_{i+1,j}-r_{i,j+1}+r_{i+1,j+1}\right)\\
 & = & \sum_{i=2}^{s_{1}}\sum_{j=2}^{s_{2}}\left(\gamma_{i}^{1}-\gamma_{i-1}^{1}\right)\left(\gamma_{j}^{2}-\gamma_{j-1}^{2}\right)r_{i,j}+\gamma_{1}^{1}\gamma_{1}^{2}r_{1,1}\\
 &  & +\sum_{i=2}^{s_{1}}(\gamma_{i}^{1}-\gamma_{i-1}^{1})\gamma_{1}^{2}r_{i,1}+\sum_{j=2}^{s_{2}}\gamma_{1}^{1}(\gamma_{j}^{2}-\gamma_{j-1}^{2})r_{1,j}
\end{eqnarray*}

\subsubsection{~\label{subsec:Funct4Scal}}

Let $\varphi:X\rightarrow Y$ be a non-decreasing $\{0,1\}$-map between
bounded modular lattices of finite length such that the rank function
on $X$ is induced by a rank function on $Y$. Then for the pairing
on $\mathbf{F}(Y)$,
\begin{eqnarray*}
\left\langle \varphi\circ f_{1},\varphi\circ f_{2}\right\rangle  & = & \sum_{i=2}^{s_{1}}\sum_{j=2}^{s_{2}}\left(\gamma_{i}^{1}-\gamma_{i-1}^{1}\right)\left(\gamma_{j}^{2}-\gamma_{j-1}^{2}\right)r'_{i,j}\\
 &  & +\gamma_{1}^{1}\gamma_{1}^{2}r'_{1,1}+\sum_{i=2}^{s_{1}}(\gamma_{i}^{1}-\gamma_{i-1}^{1})\gamma_{1}^{2}r'_{i,1}+\sum_{j=2}^{s_{2}}\gamma_{1}^{1}(\gamma_{j}^{2}-\gamma_{j-1}^{2})r'_{1,j}
\end{eqnarray*}
where $r'_{i,j}=\rank\left(\varphi(x_{i}^{1})\wedge\varphi(x_{j}^{2})\right)$.
Since $\varphi(x_{i}^{1}\wedge x_{j}^{2})\leq\varphi(x_{i}^{1})\wedge\varphi(x_{j}^{2})$
with equality when $i$ or $j$ equals $1$, $r'_{i,j}\geq r_{i,j}$
with equality when $i$ or $j$ equals $1$, thus 
\[
\left\langle f_{1},f_{2}\right\rangle \leq\left\langle \varphi\circ f_{1},\varphi\circ f_{2}\right\rangle .
\]
If $\varphi(z_{1}\wedge z_{2})=\varphi(z_{1})\wedge\varphi(z_{2})$
for all $z_{\nu}\in f_{\nu}(\mathbb{R})$, for instance if the restriction
of $\varphi$ to the sublattice of $X$ generated by $f_{1}(\mathbb{R})\cup f_{2}(\mathbb{R})$
is a lattice map, then 
\[
\left\langle f_{1},f_{2}\right\rangle =\left\langle \varphi\circ f_{1},\varphi\circ f_{2}\right\rangle .
\]
In particular, this holds whenever $f_{1}(\mathbb{R})\cup f_{2}(\mathbb{R})$
is a chain.

\subsubsection{~\label{subsec:ScalAndrC}}

For a $\{0,1\}$-chain $C=\{c_{0}<\cdots<c_{s}\}$ in $X$, we equip
$\Gr_{C}^{\bullet}=\prod_{i=1}^{s}\Gr_{C}^{i}$ with the induced rank
function as explained in \ref{subsec:GrC}. Applying the previous
discussion to the rank-compatible $\{0,1\}$-map $\varphi_{C}:X\rightarrow\Gr_{C}^{\bullet}$
(which restricts to a lattice map on any apartement $S$ of $X$ containing
$C$), we obtain the following lemma.
\begin{lem}
\label{lem:PairingIncreaseAlongRetract}Let $C$ be a $\{0,1\}$-chain.
Then for every $f_{1},f_{2}\in\mathbf{F}(X)$, 
\[
\left\langle f_{1},f_{2}\right\rangle \leq\left\langle r_{C}(f_{1}),r_{C}(f_{2})\right\rangle 
\]
with equality if $C$, $f_{1}$ and $f_{2}$ are contained in a common
apartement of $\mathbf{F}(X)$.
\end{lem}

\subsubsection{~\label{subsec:FormulaPairingApp}}

This yields another formula for the pairing on $\mathbf{F}(X)$: for
every apartment $\mathbf{F}(S)$, there is a function $\delta_{S}:\Ji(S)\rightarrow\mathbb{R}_{>0}$
such that for every $f_{1},f_{2}\in\mathbf{F}(S)$, 
\[
\left\langle f_{1},f_{2}\right\rangle =\sum_{x\in\Ji(S)}f_{1}^{\sharp}(x)f_{2}^{\sharp}(x)\cdot\delta_{S}(x)
\]
where $f^{\sharp}:\Ji(S)\rightarrow\mathbb{R}$ is the non-increasing
map attached to $f\in\mathbf{F}(S)$. Indeed, pick a maximal chain
$C\subset S$. Then $\left\langle f_{1},f_{2}\right\rangle =\left\langle r_{C}(f_{1}),r_{C}(f_{2})\right\rangle $.
But the pairing on $\mathbf{F}(\Gr_{C}^{\bullet})$ is easily computed,
and it is a positive definite symmetric bilinear form: for $g_{1}$
and $g_{2}$ in $\mathbf{F}(\Gr_{C}^{\bullet})$ corresponding to
functions $g_{1}^{\sharp}$ and $g_{2}^{\sharp}:\Atom(\Gr_{C}^{\bullet})\rightarrow\mathbb{R}$,
\[
\left\langle g_{1},g_{2}\right\rangle =\sum_{a\in\Atom(\Gr_{C}^{\bullet})}g_{1}^{\sharp}(a)g_{2}^{\sharp}(a)\,\rank(a).
\]
For $g_{\nu}=r_{C}(f_{\nu})=\varphi_{C}\circ f_{\nu}$, we have seen
that $g_{\nu}^{\sharp}=f_{\nu}^{\sharp}\circ\Ji(\varphi_{C}\vert S)$,
where $\Ji(\varphi_{C}\vert_{S})$ is the bijection $\Atom(\Gr_{C}^{\bullet})\simeq\Ji(S)$.
This proves our claim, with $\delta_{S}(x)=\rank(a)$ if $\Ji(\varphi_{C}\vert_{S})(a)=x$.
If $C=\{c_{0}<\cdots<c_{r}\}$, then $\Ji(S)=\{x_{1},\cdots,x_{r}\}$
with $c_{i}=c_{i-1}\wedge x_{i}$ and $\delta_{S}(x_{i})=\rank(c_{i})-\rank(c_{i-1})$
for all $i\in\{1,\cdots,r\}$. 

\subsubsection{~\label{subsec:concavpairingonfil}}

The next lemma says that our pairing is concave. 
\begin{lem}
\label{lem:PairingIsConcave}For every $f$, $g$ and $h$ in $\mathbf{F}(X)$,
we have 
\[
\left\langle f,g+h\right\rangle \geq\left\langle f,g\right\rangle +\left\langle f,h\right\rangle 
\]
with equality if $f$, $g$ and $h$ belong to a common apartement
of $\mathbf{F}(X)$.
\end{lem}
\begin{proof}
Indeed, choose $S$, $C$ and $S'$ as follows: $S$ is an apartment
of $X$ containing $g(\mathbb{R})$ and $h(\mathbb{R})$, $C$ is
a maximal chain in $S$ containing $(g+h)(\mathbb{R})\subset S$,
and $S'$ is an apartment of $X$ containing $f(\mathbb{R})$ and
$C$. If $f$, $g$ and $h$ belong to a common apartement, we may
and do also require that $S=S'$. In all cases, 
\[
\left\langle f,g+h\right\rangle \stackrel{(1)}{=}\left\langle r_{C}(f),r_{C}(g+h)\right\rangle \quad\mbox{and}\quad r_{C}(g+h)\stackrel{(2)}{=}r_{C}(g)+r_{C}(h)
\]
since respectively $(1)$ $C\subset S'$ and $f$, $g+h$ belong to
$\mathbf{F}(S')$ and $(2)$ $C\subset S$ and $g$, $h$ belong to
$\mathbf{F}(S)$. Since $C$ is maximal, $\Gr_{C}^{\bullet}$ is boolean,
$\mathbf{F}(\Gr_{C}^{\bullet})$ is an $\mathbb{R}$-vector space
and the pairing on $\mathbf{F}(\Gr_{C}^{\bullet})$ is a positive
definite symmetric bilinear form, thus 
\[
\left\langle r_{C}(f),r_{C}(g)+r_{C}(h)\right\rangle \stackrel{(3)}{=}\left\langle r_{C}(f),r_{C}(g)\right\rangle +\left\langle r_{C}(f),r_{C}(h)\right\rangle .
\]
Our claim now follows from $(1)$, $(2)$ and $(3)$ since also by~lemma~\ref{lem:PairingIncreaseAlongRetract},
\[
\left\langle r_{C}(f),r_{C}(g)\right\rangle \geq\left\langle f,g\right\rangle \quad\mbox{and}\quad\left\langle r_{C}(f),r_{C}(h)\right\rangle \geq\left\langle f,g\right\rangle 
\]
with equality if, along with $g$, $h$ and $C$, also $f$ belongs
to $\mathbf{F}(S)$. 
\end{proof}

\subsubsection{~}

It follows that for every $f\in\mathbf{F}(X)$, the function $g\mapsto\left\langle f,g\right\rangle $
is a degree function on $\mathbf{F}(X)$. The corresponding degree
function on $X$ maps $x\in X$ to 
\[
\deg_{f}(x)\eqd\sum_{\gamma\in\mathbb{R}}\gamma\,\rank\left(\Gr_{f\wedge x}^{\gamma}\right)\quad\mbox{with}\quad\Gr_{f\wedge x}^{\gamma}\eqd\left[f_{+}(\gamma)\wedge x,f(\gamma)\wedge x\right].
\]
For $f=X(1)$, we retrieve the rank: $\deg_{X(1)}(x)=\rank(x)$. For
$f\in\mathbf{F}(X)$, 
\[
\deg(f)\eqd\left\langle X(1),f\right\rangle =\sum_{\gamma\in\mathbb{R}}\gamma\,\rank\left(\Gr_{f}^{\gamma}\right)
\]
is the natural degree function on $\mathbf{F}(X)$ and the formula
\[
\deg(f+g)\geq\deg(f)+\deg(g)
\]
follows either from~\ref{subsec:concavpairingonfil} or from~\ref{subsec:degfctXandF(X)}.

\subsubsection{~}

For $f,g\in\mathbf{F}(X)$, $\left\langle f,f\right\rangle \geq0$
and $2\left\langle f,g\right\rangle \leq\left\langle f,f\right\rangle +\left\langle g,g\right\rangle $:
this follows from the formula in \ref{subsec:FormulaPairingApp}.
We may thus define 
\[
\left\Vert f\right\Vert \eqd\sqrt{\left\langle f,f\right\rangle }\quad\mbox{and}\quad d(f,g)\eqd\sqrt{\left\Vert f\right\Vert ^{2}+\left\Vert g\right\Vert ^{2}-2\left\langle f,g\right\rangle }.
\]
For every $\{0,1\}$-chain $C$ in $X$, $\left\Vert r_{C}(f)\right\Vert =\left\Vert f\right\Vert $
and 
\[
d\left(r_{C}(f),r_{C}(g)\right)\leq d(f,g)
\]
with equality if there is an apartment $\mathbf{F}(S)$ with $C\subset S$
and $f,g\in\mathbf{F}(S)$. Also,
\[
\begin{array}{c}
\left\Vert f\right\Vert =d(0_{X},f),\quad\left\Vert tf\right\Vert =t\left\Vert f\right\Vert ,\quad d(tf,tg)=td(f,g)\\
\mbox{and}\quad\left\Vert f+g\right\Vert ^{2}=\left\Vert f\right\Vert ^{2}+\left\Vert g\right\Vert ^{2}+2\left\langle f,g\right\rangle 
\end{array}
\]
for every $f,g\in\mathbf{F}(X)$ and $t\in\mathbb{R}_{+}$. The first
three formulas are obvious, and the last one follows from the additivity
of the symmetric pairing on any apartment. If $f$ and $f'$ are opposed
in $\mathbf{F}(X)$, then $\left\Vert f\right\Vert =\left\Vert f'\right\Vert =\frac{1}{2}d(f,f')$
and $\left\langle f,f'\right\rangle =-\left\Vert f\right\Vert ^{2}$.

\subsubsection{~}

We refer to~\cite{BrHa99} for all things pertaining to geodesic
and CAT(0)-spaces. 
\begin{prop}
The function $d:\mathbf{F}(X)\times\mathbf{F}(X)\rightarrow\mathbb{R}_{\geq0}$
is a CAT(0)-distance. 
\end{prop}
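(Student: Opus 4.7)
The plan is to prove the Bruhat--Tits CN-inequality for $d$, which combined with the geodesic nature of $(\mathbf{F}(X),d)$ yields CAT(0). Three earlier facts form the backbone: (i) the formula of~\ref{sub:FormulaPairingApp} exhibits each apartment $\mathbf{F}(S)$ as a Euclidean cone, so the pairing restricts there to a positive definite symmetric bilinear form; (ii) the retraction $r_{C}$ is an isometry on $\mathbf{F}(S)$ when $C\subset S$ and $1$-Lipschitz in general (from~\ref{sub:ScalAndrC}); and (iii) the pairing satisfies the concavity $\langle f,g+h\rangle\geq\langle f,g\rangle+\langle f,h\rangle$ (from~\ref{sub:concavpairingonfil}).

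First I would check that $d$ is an honest distance. Symmetry is clear. For the triangle inequality applied to $f,g,h\in\mathbf{F}(X)$, pick by Birkhoff--Dedekind an apartment $S$ containing $f,h$ and a maximal chain $C\subset S$; then $d(f,h)=d(r_{C}(f),r_{C}(h))$ by the isometry property on $\mathbf{F}(S)$, and inside the Euclidean space $\mathbf{F}(\Gr_{C}^{\bullet})$ one combines the Euclidean triangle inequality with $d(r_{C}(\cdot),r_{C}(\cdot))\leq d(\cdot,\cdot)$ to conclude. Non-degeneracy and the fact that $(\mathbf{F}(X),d)$ is geodesic both follow from (i): for any two points, the Euclidean segment inside a common apartment has length equal to their distance.

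The key step is the CN-inequality. Given $f,g,h\in\mathbf{F}(X)$, choose an apartment $\mathbf{F}(S)$ containing $g,h$ and set $m=\frac{1}{2}(g+h)$, the Euclidean midpoint there, so that $d(g,m)=d(h,m)=\frac{1}{2}d(g,h)$. The bilinearity of the pairing on $\mathbf{F}(S)$ gives $\|m\|^{2}=\frac{1}{4}(\|g\|^{2}+\|h\|^{2}+2\langle g,h\rangle)$, while the scaling $\langle f,\lambda g\rangle=\lambda\langle f,g\rangle$ of the pairing gives $2\langle f,m\rangle=\langle f,g+h\rangle$. A direct expansion then yields
\[
\tfrac{1}{2}d(f,g)^{2}+\tfrac{1}{2}d(f,h)^{2}-\tfrac{1}{4}d(g,h)^{2}-d(f,m)^{2}=\langle f,g+h\rangle-\langle f,g\rangle-\langle f,h\rangle,
\]
which is $\geq0$ by the concavity of~\ref{sub:concavpairingonfil}. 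This is exactly the Bruhat--Tits CN-inequality, giving CAT(0).

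The main conceptual point is that the midpoint $m$ is built inside an apartment containing only $g$ and $h$, while $f$ is arbitrary; no common apartment of $\{f,g,h\}$ is required. The concavity of $\langle f,-\rangle$ supplied by~\ref{sub:concavpairingonfil} absorbs exactly this asymmetry. The rest is routine computation in Euclidean apartments, all already set up in the preceding subsections.
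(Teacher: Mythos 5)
Your proof is correct and follows essentially the same route as the paper's: triangle inequality and non-degeneracy via the retractions $r_{C}$ onto Euclidean apartments, geodesics from the addition map, and the comparison inequality reduced to the concavity $\left\langle f,g+h\right\rangle \geq\left\langle f,g\right\rangle +\left\langle f,h\right\rangle$ of~\ref{sub:concavpairingonfil}. The only (immaterial) difference is that you verify the Bruhat--Tits CN inequality at the midpoint $t=\frac{1}{2}$ and invoke the standard criterion, whereas the paper proves the convexity inequality directly for all $t\in[0,1]$ using the same concavity applied to $(1-t)g$ and $th$.
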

\begin{proof}
If $X$ is a finite boolean lattice, then $d$ is the euclidean distance
attached to the positive definite symmetric bilinear form (in short:
scalar product) $\left\langle -,-\right\rangle $ on the $\mathbb{R}$-vector
space $\mathbf{F}(X)$, which proves the proposition. For the general
case:
\[
\forall f,g\in\mathbf{F}(X):\qquad d(f,g)=0\,\Longrightarrow\,f=g.
\]
Indeed, choose an apartment with $f,g\in\mathbf{F}(S)$, a maximal
chain $C\subset S$. Then $d(r_{C}(f),r_{C}(g))=0$, thus $r_{C}(f)=r_{C}(g)$
since $d$ is a (euclidean) distance on $\mathbf{F}(\Gr_{C}^{\bullet})$
and $f=g$ since the restriction $r_{C}\vert_{\mathbf{F}(S)}:\mathbf{F}(S)\rightarrow\mathbf{F}(\Gr_{C}^{\bullet})$
is injective. 
\[
\forall f,g,h\in\mathbf{F}(X):\qquad d(f,h)\leq d(f,g)+d(g,h).
\]
Indeed, choose an apartment with $f,h\in\mathbf{F}(S)$, a maximal
chain $C\subset S$. Then 
\begin{eqnarray*}
d(f,h) & = & d(r_{C}(f),r_{C}(h))\\
 & \leq & d(r_{C}(f),r_{C}(g))+d(r_{C}(g),r_{C}(h))\\
 & \leq & d(f,g)+d(g,h).
\end{eqnarray*}
Thus $d$ is a distance, and a similar argument shows that $(\mathbf{F}(X),d)$
is a geodesic metric space. More precisely, for every $g,h\in\mathbf{F}(X)$
and $t\in[0,1]$, if 
\[
g_{t}=(1-t)g+th
\]
is the sum of $(1-t)\cdot g$ and $t\cdot h$ in $\mathbf{F}(X)$,
then $d\left(g,g_{t}\right)=t\cdot d(g,h)$, thus $t\mapsto g_{t}$
is a geodesic segment from $g$ to $h$ in $\mathbf{F}(X)$. Note
also that 
\[
\left\Vert g_{t}\right\Vert ^{2}=(1-t)^{2}\left\Vert g\right\Vert ^{2}+t^{2}\left\Vert h\right\Vert ^{2}+2t(1-t)\left\langle g,h\right\rangle .
\]
For the CAT(0)-inequality, we finally have to show that for every
$f\in\mathbf{F}(X)$, 
\[
d(f,g_{t})^{2}+t(1-t)d(g,h)^{2}\leq(1-t)d(f,g)^{2}+td(f,h)^{2}.
\]
Given the previous formula for $\left\Vert g_{t}\right\Vert ^{2}$,
this amounts to 
\[
\left\langle f,g_{t}\right\rangle \geq(1-t)\left\langle f,g\right\rangle +t\left\langle f,h\right\rangle 
\]
which is the already established concavity of $\left\langle f,-\right\rangle $.
\end{proof}

\subsubsection{~}

Let $d_{\mathrm{Std}}:\mathbf{F}(X)\times\mathbf{F}(X)\rightarrow\mathbb{R}$
be the distance attached to the standard rank function $x\mapsto\height(x)$
on $X$. By~\ref{subsec:GrC}, there are constants $A>a>0$ such
that $a\leq\rank(y)-\rank(x)\leq A$ for every $x<y$ in $X$. It
then follows from~\ref{subsec:FormulaPairingApp} that there are
constants $B>b>0$ such that $b\,d_{\mathrm{Std}}(f,g)\leq d(f,g)\leq B\,d_{\mathrm{Std}}(f,g)$
for every $f,g\in\mathbf{F}(X)$. The topology induced by $d$ on
$\mathbf{F}(X)$ thus does not depend upon the chosen rank function.
We call it the \emph{canonical topology}. Being complete for the induced
distance, apartments and closed chambers are closed in $\mathbf{F}(X)$. 
\begin{prop}
The metric space $(\mathbf{F}(X),d)$ is complete.
\end{prop}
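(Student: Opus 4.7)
The plan is to take a Cauchy sequence $(f_{n})$ in $\mathbf{F}(X)$, extract a subsequence that stabilizes combinatorially, construct a candidate limit $f^{\infty}\in\mathbf{F}(X)$, and conclude by the standard fact that a Cauchy sequence with a convergent subsequence converges. First, $(f_{n})$ is bounded in norm, and the apartment formula~\ref{sub:FormulaPairingApp} together with the uniform positive lower bound $a$ on the coefficients $\delta_{S}(x)$ (which exists by~\ref{sub:GrC}: the positive increments of $\rank$ along any maximal chain take only finitely many values) forces the jumps $\gamma_{j}^{n}$ of $f_{n}$ to be bounded in $\mathbb{R}$ and the number of jumps $s_{n}$ to be bounded by $r$. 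Passing to a subsequence, I assume $s_{n}=s$ is constant and $\gamma_{j}^{n}\to\gamma_{j}^{\infty}$ for each $j$. I then group indices into maximal consecutive blocks $J_{1},\ldots,J_{t}$ on which $\gamma_{j}^{\infty}$ is constant equal to some $\delta_{m}$, with $\delta_{1}>\cdots>\delta_{t}$, set $j_{m}=\max J_{m}$, and aim to build $f^{\infty}$ with jumps $\delta_{1}>\cdots>\delta_{t}$ and chain elements located at the indices $j_{m}$.

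The hard part will be to show, after one more extraction, that for each $m<t$ the element $c_{j_{m}}^{n}:=f_{n}(\gamma_{j_{m}}^{n})$ is eventually constant in $n$, equal to some $c_{m}^{\infty}\in X$ (the case $m=t$ is automatic since the element is always $1_{X}$). Supposing otherwise, I can find arbitrarily large $n,n'$ with $c_{j_{m}}^{n}\neq c_{j_{m}}^{n'}$. By Birkhoff--Dedekind, some apartment $S$ contains both $f_{n}(\mathbb{R})$ and $f_{n'}(\mathbb{R})$; since $S$ is distributive and the two chain elements differ in $S$, some atom $x\in\Ji(S)$ lies below exactly one of them, which gives $(f_{n}^{\sharp}(x)-f_{n'}^{\sharp}(x))^{2}\geq(\gamma_{j_{m}}^{n}-\gamma_{j_{m}+1}^{n'})^{2}$. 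The formula~\ref{sub:FormulaPairingApp} then yields
\[
d(f_{n},f_{n'})^{2}\geq a\cdot\bigl(\gamma_{j_{m}}^{n}-\gamma_{j_{m}+1}^{n'}\bigr)^{2},
\]
which is bounded below by $a(\delta_{m}-\delta_{m+1})^{2}/2>0$ for $n,n'$ large, contradicting the Cauchy property. Note that one cannot hope for the full chain $C_{n}=f_{n}(\mathbb{R})$ to stabilize, because indices strictly interior to a block $J_{m}$ correspond to chain elements that may vary freely without affecting the limit: this is why the extraction must be carried out at the block tops only.

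Given the $c_{m}^{\infty}$'s, the sequence $0_{X}=c_{0}^{\infty}<c_{1}^{\infty}<\cdots<c_{t}^{\infty}=1_{X}$ is a $\{0,1\}$-chain in $X$ (monotonicity follows from $c_{j_{m}}^{n}<c_{j_{m'}}^{n}$ within each $C_{n}$), and the filtration $f^{\infty}$ with this chain and jumps $\delta_{1}>\cdots>\delta_{t}$ belongs to $\mathbf{F}(X)$. A direct computation in an apartment containing both $f_{n}(\mathbb{R})$ and $f^{\infty}(\mathbb{R})$, again via~\ref{sub:FormulaPairingApp}, shows $d(f_{n},f^{\infty})^{2}\to 0$: each atomic contribution is controlled either by $(\gamma_{j}^{n}-\delta_{m})^{2}$ or by $(\gamma_{j}^{n}-\gamma_{j'}^{n})^{2}$ for $j,j'$ in a common block $J_{m}$, both of which tend to $0$ by construction. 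Hence the extracted subsequence converges to $f^{\infty}$, and since $(f_{n})$ is Cauchy, the original sequence also converges to $f^{\infty}$.
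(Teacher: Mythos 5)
Your proof is correct, and it takes a genuinely different route from the paper's. The paper normalizes to the standard rank function, uses the non-expanding type map $\mathbf{t}:\mathbf{F}(X)\rightarrow\mathbb{R}_{\geq}^{r}$, and exploits the fact that the set of distances between two filtrations of prescribed types is finite with minimum attained inside any closed chamber; the resulting uniform gap $\epsilon(t,t)>0$ lets it identify the limit, without extracting subsequences, as the common nearest point of type $t$ in the chambers $\mathbf{F}(C_{n})$. You instead stabilize the combinatorial data directly: bound the number and size of the jumps, extract so that the jump values converge, and then show that the chain elements at the block tops $j_{m}$ are eventually constant -- your substitute for the paper's gap $\epsilon$ being the observation that two distinct elements of a common apartment are separated by some $x\in\Ji(S)$, which forces $d(f_{n},f_{n'})^{2}\geq a(\delta_{m}-\delta_{m+1})^{2}/2$. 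Both arguments rest on the same discreteness phenomenon (finitely many rank increments, hence a quantitative separation between combinatorially distinct filtrations of nearby types), but yours produces the limit filtration completely explicitly, with its chain and jumps named, at the cost of subsequence bookkeeping, whereas the paper's is shorter and sets up the type-map estimates that it reuses in the proof of proposition~\ref{prop:Contdeg}. Two small points worth tightening: the lower bound $(f_{n}^{\sharp}(x)-f_{n'}^{\sharp}(x))^{2}\geq(\gamma_{j_{m}}^{n}-\gamma_{j_{m}+1}^{n'})^{2}$ is only legitimate once $n,n'$ are large enough that $\gamma_{j_{m}}^{n}-\gamma_{j_{m}+1}^{n'}>0$ (which is exactly where you use it, so this is a matter of phrasing); and the uniform lower bound $a$ on $\delta_{S}(x)$ over all apartments comes from the finiteness of $\rank(X)$ (every value being a subsum over a fixed maximal chain), not merely from finiteness along a single chain -- this is the statement the paper records just before the proposition, so you may simply cite it.
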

\begin{proof}
We may assume that $d=d_{\mathrm{Std}}$. The type function $\mathbf{t}:\mathbf{F}(X)\rightarrow\mathbb{R}_{\geq}^{r}$
defined in~\ref{subsec:typemapdef}~is then non-expanding for the
standard euclidean distance $d$ on $\mathbb{R}_{\geq}^{r}$: this
follows from~\ref{subsec:Funct4Scal} applied to $\height:X\rightarrow\{0,\cdots,r\}$.
In fact, for any maximal chain $C$ in any apartment $S$ of $X$,
the composition of the isometric embeddings
\[
\xyC{3pc}\xymatrix{\mathbf{F}(C)\ar@{^{(}->}[r] & \mathbf{F}(S)\ar@{^{(}->}[r]\sp(0.4){r_{c}} & \mathbf{F}(\Gr_{C}^{\bullet})\simeq\mathbb{R}^{r}}
\]
with the non-expanding type map $\mathbf{t}:\mathbf{F}(\Gr_{C}^{\bullet})\rightarrow\mathbb{R}_{\geq}^{r}$
is an isometry $\mathbf{F}(C)\simeq\mathbb{R}_{\geq}^{r}$. It follows
that for every pair of types $(t_{1},t_{2})$ in $\mathbb{R}_{\geq}^{r}$,
\[
\left\{ d(f_{1},f_{2})\left|\begin{array}{l}
f_{\nu}\in\mathbf{F}(S)\\
\mathbf{t}(f_{\nu})=t_{\nu}
\end{array}\right.\right\} \subset\left\{ d(f_{1},f_{2})\left|\begin{array}{l}
f_{\nu}\in\mathbf{F}(\Gr_{C}^{\bullet})\\
\mathbf{t}(f_{\nu})=t_{\nu}
\end{array}\right.\right\} 
\]
and both sets are finite with the same minimum $d(t_{1},t_{2})$,
thus also 
\[
\left\{ d(f_{1},f_{2})\left|\begin{array}{l}
f_{\nu}\in\mathbf{F}(X)\\
\mathbf{t}(f_{\nu})=t_{\nu}
\end{array}\right.\right\} \subset\left\{ d(f_{1},f_{2})\left|\begin{array}{l}
f_{\nu}\in\mathbf{F}(\Gr_{C}^{\bullet})\\
\mathbf{t}(f_{\nu})=t_{\nu}
\end{array}\right.\right\} 
\]
is finite with minimum $d(t_{1},t_{2})$. In particular, there is
a constant $\epsilon(t_{1},t_{2})>0$ such that for every $f_{1},f_{2}\in\mathbf{F}(X)$
with $\mathbf{t}(f_{1})=t_{1}$ and $\mathbf{t}(f_{2})=t_{2}$, 
\[
d(f_{1},f_{2})=d(t_{1},t_{2})\quad\mbox{or}\quad d(f_{1},f_{2})\geq d(t_{1},t_{2})+\epsilon(t_{1},t_{2}).
\]
Let now $(f_{n})_{n\geq0}$ be a Cauchy sequence in $\mathbf{F}(X)$.
Then $t_{n}=\mathbf{t}(f_{n})$ is a Cauchy sequence in $\mathbb{R}_{\geq}^{r}$,
so it converges to a type $t\in\mathbb{R}_{\geq}^{r}$. Fix $N\in\mathbb{N}$
such that 
\[
d(f_{n},f_{m})<{\textstyle \frac{1}{3}}\epsilon(t,t)\quad\mbox{and}\quad d(t_{n},t)\leq{\textstyle \frac{1}{3}}\epsilon(t,t)
\]
for all $n,m\geq N$. For each $n\geq N$, pick a maximal chain $C_{n}$
containing $f_{n}(\mathbb{R})$ and let $g_{n}$ be the unique element
of the closed chamber $\mathbf{F}(C_{n})$ such that $\mathbf{t}(g_{n})=t$.
Then $d(f_{n},g_{n})=d(t_{n},t)$ since $f_{n}$ and $g_{n}$ belong
to $\mathbf{F}(C_{n})$. Note that if $g'_{n}$ is any other element
of $\mathbf{F}(X)$ such that $\mathbf{t}(g'_{n})=t$ and $d(f_{n},g'_{n})=d(t_{n},t)$,
then 
\[
d(g_{n},g'_{n})\leq d(g_{n},f_{n})+d(f_{n},g'_{n})=2d(t_{n},t)\leq{\textstyle \frac{2}{3}}\epsilon(t,t)<\epsilon(t,t),
\]
therefore $g_{n}=g'_{n}$. Similarly for every $n,m\geq N$, 
\[
d(g_{n},g_{m})\leq d(g_{n},f_{n})+d(f_{n},f_{m})+d(f_{m},g_{m})<\epsilon(t,t)
\]
thus $g_{n}=g_{m}$. Call $g\in\mathbf{F}(X)$ this common value.
Then 
\[
d(f_{n},g)=d(f_{n},g_{n})=d(t_{n},t)
\]
thus $f_{n}\rightarrow g$ in $\mathbf{F}(X)$ since $t_{n}\rightarrow t$
in $\mathbb{R}_{\geq}^{r}$.
\end{proof}

\subsubsection{~}

Let $\deg:X\rightarrow\mathbb{R}$ be a degree function on $X$ and
let $\left\langle \star,-\right\rangle :\mathbf{F}(X)\rightarrow\mathbb{R}$
be its unique extension to a degree function on $\mathbf{F}(X)$,
as explained in~\ref{subsec:degfctXandF(X)}. 
\begin{prop}
\label{prop:Contdeg}Suppose that $\lim f_{n}=f$ in $\mathbf{F}(X)$.
Then 
\[
\lim\sup\left\langle \star,f_{n}\right\rangle \leq\left\langle \star,f\right\rangle .
\]
If moreover $\deg(X)$ is bounded, then $\left\langle \star,-\right\rangle :\mathbf{F}(X)\rightarrow\mathbb{R}$
is continuous.
\end{prop}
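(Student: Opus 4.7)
The plan is to prove the two halves separately, each by reducing to a closed chamber, but with different choices of chamber in each case.

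For the $\limsup$ inequality I pick any maximal chain $C \subset X$ containing $f(\mathbb{R})$ — possible because $f(\mathbb{R})$ is itself a chain — and route everything through the refinement map $r_C : \mathbf{F}(X) \to \mathbf{F}(\Gr_C^\bullet)$. This map is $1$-Lipschitz (from $\Vert r_C(h)\Vert = \Vert h\Vert$ and the scalar-product inequality of \ref{sub:ScalAndrC}), so $r_C(f_n) \to r_C(f)$. The target lattice $\Gr_C^\bullet$ is boolean with exact induced degree (\ref{sub:GrC}), so on $\mathbf{F}(\Gr_C^\bullet)$ the extension $\left\langle \star, -\right\rangle$ is the linear form $g \mapsto \sum_{a \in \Atom(\Gr_C^\bullet)} g^\sharp(a)\deg(a)$, hence continuous. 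Combined with the general inequality $\left\langle \star, f_n \right\rangle \leq \left\langle \star, r_C(f_n)\right\rangle$ and the equality $\left\langle \star, f \right\rangle = \left\langle \star, r_C(f)\right\rangle$ — the latter because $\deg$ is trivially exact on the chain $C \supset f(\mathbb{R})$, so \ref{sub:degfctXandF(X)} applies with equality — this yields $\limsup \left\langle \star, f_n \right\rangle \leq \left\langle \star, f \right\rangle$.

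For the continuity claim I need $f$ and $f_n$ to eventually lie in a common closed chamber, something the proof of the preceding proposition already delivers. Reading that argument: for any Cauchy sequence $(f_n)$ and any maximal chain $C_n \supset f_n(\mathbb{R})$, the unique $g_n \in \mathbf{F}(C_n)$ of type $t := \lim \mathbf{t}(f_n)$ stabilises to a common value $g$ with $f_n \to g$. Since here $\lim f_n = f$, uniqueness of limits forces $f = g \in \mathbf{F}(C_n)$ for all sufficiently large $n$. On the chamber $\mathbf{F}(C_n) \simeq \mathbb{R}_{\geq}^{r}$, with $C_n = \{c_0^{(n)} < \cdots < c_r^{(n)}\}$, the formula of \ref{sub:degfctXandF(X)} linearises as
\[
\left\langle \star, g \right\rangle = \sum_{i=1}^{r} g^\sharp\bigl(c_i^{(n)}\bigr)\bigl(\deg(c_i^{(n)}) - \deg(c_{i-1}^{(n)})\bigr),
\]
so $\left\langle \star, f_n \right\rangle - \left\langle \star, f \right\rangle$ is a linear combination of the coordinate-wise differences $\mathbf{t}(f_n)_i - \mathbf{t}(f)_i$ with coefficients $\deg(c_i^{(n)}) - \deg(c_{i-1}^{(n)})$. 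The hypothesis that $\deg(X)$ is bounded is exactly what controls these coefficients uniformly in $n$, while $\mathbf{t}(f_n) \to \mathbf{t}(f)$ follows from the non-expansiveness of $\mathbf{t}$ for the standard rank (\ref{sub:Funct4Scal}), whose induced topology coincides with the canonical one; assembling these bounds gives $\left\langle \star, f_n \right\rangle \to \left\langle \star, f \right\rangle$.

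The main hurdle is the second part, specifically the recognition that the completeness proof secretly forces an eventual common-chamber containment $f \in \mathbf{F}(C_n)$. Without this reduction, $\left\langle \star, -\right\rangle$ is only concave (not linear) near $f$ and no uniform comparison between $\left\langle \star, f_n \right\rangle$ and $\left\langle \star, f \right\rangle$ is available; once the reduction is in place, boundedness of $\deg$ enters precisely where it is needed, namely to tame the chain-dependent coefficients $\deg(c_i^{(n)}) - \deg(c_{i-1}^{(n)})$.
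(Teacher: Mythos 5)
Your proof is correct, and it differs from the paper's in an instructive way. For the $\limsup$ half the paper routes everything through the fact, extracted from the completeness proof, that every maximal chain $C_{n}\supset f_{n}(\mathbb{R})$ eventually contains $f(\mathbb{R})$, and uses this to get the \emph{equality} $\left\langle \star,f_{n}\right\rangle =\left\langle \star,r_{C}(f_{n})\right\rangle $ with $C=f(\mathbb{R})$; you instead fix once and for all a maximal chain $C\supset f(\mathbb{R})$ and invoke only the one-sided inequality $\left\langle \star,f_{n}\right\rangle \leq\left\langle \star,r_{C}(f_{n})\right\rangle $ together with the continuity of the linear form on the boolean quotient and the equality $\left\langle \star,f\right\rangle =\left\langle \star,r_{C}(f)\right\rangle $ (valid since $\deg$ is trivially exact on the chain $C$). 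This is simpler and decouples upper semicontinuity from the chamber-stabilization argument. For the continuity half you and the paper rely on the same key input --- the eventual containment $f\in\mathbf{F}(C_{n})$ hidden in the completeness proof --- but your execution is more direct: the paper decomposes $\Gr_{C}^{\bullet}$ as a product to reduce to $f=X(\mu)$ and then estimates the jumps $\gamma_{n,i}-\gamma_{n,i-1}$, whereas you write $\left\langle \star,f_{n}\right\rangle -\left\langle \star,f\right\rangle $ inside the common chamber as a linear combination of the type differences with coefficients $\deg(c_{i}^{(n)})-\deg(c_{i-1}^{(n)})$, uniformly bounded by the hypothesis on $\deg(X)$, while $\mathbf{t}(f_{n})\rightarrow\mathbf{t}(f)$ by non-expansiveness of the type map. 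Both arguments use boundedness in equivalent ways; yours avoids the product reduction at the cost of being slightly less explicit about where one-sided boundedness alone would suffice.
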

\begin{rem}
The first assertion says that $\left\langle \star,-\right\rangle $
is always upper semi-continuous.
\end{rem}
\begin{proof}
Let $C=f(\mathbb{R})=\{c_{0}<\cdots<c_{s}\}$. In the previous proof,
we have seen that for every sufficiently large $n$, any maximal chain
$C_{n}$ containing $f_{n}(\mathbb{R})$ also contains $C$. Since
our degree function is exact on the chain $C_{n}$, 
\[
\left\langle \star,f_{n}\right\rangle =\left\langle \star,r_{C}(f_{n})\right\rangle \quad\mbox{and}\quad\left\langle \star,f\right\rangle =\left\langle \star,r_{C}(f)\right\rangle .
\]
Since $d\left(r_{C}(f_{n}),r_{C}(f)\right)\leq d(f_{n},f)$, also
$\lim r_{C}(f_{n})=r_{C}(f)$ in $\mathbf{F}(X)$. Now on 
\[
\mathbf{F}(\Gr_{C}^{\bullet})=\prod_{i=1}^{s}\mathbf{F}(\Gr_{C}^{i})\quad\mbox{with}\quad\Gr_{C}^{i}=[c_{i-1},c_{i}]
\]
the distance and degree are respectively given by 
\[
d\left((a_{i}),(b_{i})\right)^{2}=\sum_{i=1}^{s}d_{i}(a_{i},b_{i})^{2}\quad\mbox{and}\quad\left\langle \star,(a_{i})\right\rangle =\sum_{i=1}^{s}\left\langle \star_{i},a_{i}\right\rangle 
\]
where $d_{i}$ and $\left\langle \star_{i},-\right\rangle $ are induced
by the corresponding rank and degree functions
\[
\rank_{i}(z)=\rank(z)-\rank(c_{i-1})\quad\mbox{and}\quad\deg_{i}(z)=\deg(z)-\deg(c_{i-1})
\]
for $z\in\Gr_{C}^{i}$. All this reduces us to the case where $f=X(\mu)$
for some $\mu\in\mathbb{R}$. Now
\[
\left\langle \star,f_{n}\right\rangle =\gamma_{n,1}\deg(1_{X})+\sum_{i=2}^{s_{n}}(\gamma_{n,i}-\gamma_{n,i-1})\deg\left(f_{n}(\gamma_{n,i})\right)
\]
with $\Jump(f_{n})=\{\gamma_{n,1}<\cdots<\gamma_{n,s_{n}}\}$. Since
$\lim\mathbf{t}(f_{n})=\mathbf{t}(f)=(\mu,\cdots,\mu)$ in $\mathbb{R}_{\geq}^{r}$,
\[
\lim\gamma_{n,1}=\mu\quad\mbox{and}\quad\lim\mbox{\ensuremath{\sup}}\left\{ \gamma_{n,i}-\gamma_{n,i-1}:2\leq i\leq s_{n}\right\} =0.
\]
Since finally $\deg(X)$ is bounded above, we obtain 
\[
\lim\sup\left\langle \star,f_{n}\right\rangle \leq\mu\deg(1_{X})=\left\langle \star,f\right\rangle 
\]
and $\lim\left\langle \star,f_{n}\right\rangle =\left\langle \star,f\right\rangle $
if $\deg(X)$ is also bounded below. 
\end{proof}

\subsubsection{~}

For $f\in\mathbf{F}(X)$, the degree function $\left\langle f,-\right\rangle :\mathbf{F}(X)\rightarrow\mathbb{R}$
is continuous since 
\[
\left\langle f,g\right\rangle =\frac{1}{2}\left(\left\Vert f\right\Vert ^{2}+\left\Vert g\right\Vert ^{2}-d(f,g)^{2}\right)=\frac{1}{2}\left(\left\Vert f\right\Vert ^{2}+\left\Vert g\right\Vert ^{2}-d(f,g)^{2}\right).
\]
This also follows from proposition~\ref{prop:Contdeg} since for
every $x\in X$, 
\[
\left|\deg_{f}(x)\right|=\left|\left\langle f,x\right\rangle \right|\leq\left\Vert f\right\Vert \left\Vert x\right\Vert 
\]
with $\left\Vert x\right\Vert ^{2}=\rank(x)\leq\rank(1_{X})$, but
a bit more is actually true:
\begin{prop}
The degree function $\left\langle f,-\right\rangle :\mathbf{F}(X)\rightarrow\mathbb{R}$
is $\left\Vert f\right\Vert $-Lipschitzian. 
\end{prop}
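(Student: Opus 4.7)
The plan is to reduce the claim to the Cauchy--Schwarz inequality inside the $\mathbb{R}$-vector space $\mathbf{F}(\Gr_C^\bullet)$ for a carefully chosen maximal chain $C$. On $\mathbf{F}(X)$ itself the pairing $\langle f,-\rangle$ is merely concave, not linear, so a direct Cauchy--Schwarz is unavailable; but projecting via the retraction $r_C$ lands in a setting where the pairing is genuinely bilinear and positive definite, by~\ref{sub:FormulaPairingApp}. The subtlety, which will be the main obstacle to arrange, is that $r_C$ only provides an \emph{inequality} $\langle f_1,f_2\rangle\leq\langle r_C(f_1),r_C(f_2)\rangle$ in general, with equality only when a common apartment contains both $f_1(\mathbb{R})$ and $f_2(\mathbb{R})$.

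By the $g\leftrightarrow h$ symmetry of the claim, it suffices to prove $\langle f,g\rangle-\langle f,h\rangle\leq\|f\|\cdot d(g,h)$. I would first invoke Birkhoff--Dedekind to pick an apartment $S$ of $X$ containing the two chains $f(\mathbb{R})$ and $h(\mathbb{R})$, and then take a maximal chain $C$ in $S$. Since any apartment of $X$ has length $r$, this $C$ is maximal in $X$ as well, so $\Gr_C^\bullet$ is a finite boolean lattice and $\mathbf{F}(\Gr_C^\bullet)$ is an $\mathbb{R}$-vector space equipped with its positive definite symmetric bilinear pairing.

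Next, I would apply~\ref{sub:ScalAndrC} twice, deliberately asymmetrically. Because $f,h\in\mathbf{F}(S)$ and $C\subset S$, the restriction of $\varphi_C$ to $S$ is a lattice map, so we get the \emph{equality}
\[
\langle f,h\rangle=\langle r_C(f),r_C(h)\rangle.
\]
For the other term I only need the unconditional \emph{inequality} $\langle f,g\rangle\leq\langle r_C(f),r_C(g)\rangle$. Combined with the two general identities $\|r_C(f)\|=\|f\|$ and $d(r_C(g),r_C(h))\leq d(g,h)$, Cauchy--Schwarz on the euclidean space $\mathbf{F}(\Gr_C^\bullet)$ then yields
\[
\langle f,g\rangle-\langle f,h\rangle\;\leq\;\langle r_C(f),\,r_C(g)-r_C(h)\rangle\;\leq\;\|f\|\cdot d(r_C(g),r_C(h))\;\leq\;\|f\|\cdot d(g,h).
\]
Running the same argument with the roles of $g$ and $h$ swapped (choosing instead an apartment containing $f(\mathbb{R})\cup g(\mathbb{R})$) gives the reverse inequality, completing the proof.

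The delicate point, hence the main obstacle, is precisely this choice of apartment. Picking $S$ to contain $f(\mathbb{R})\cup h(\mathbb{R})$ (rather than, say, $f(\mathbb{R})\cup g(\mathbb{R})$) is what aligns the two appeals to~\ref{sub:ScalAndrC}: the $\leq$ on $\langle f,g\rangle$ and the $=$ on $\langle f,h\rangle$ combine constructively in the subtraction. A symmetric choice would flip a sign and break the estimate, which is why one has to run the argument twice with two different chains $C$.
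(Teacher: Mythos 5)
Your proof is correct, and it takes a genuinely different reduction from the paper's. The paper first subdivides the geodesic $t\mapsto (1-t)g+th$ in $\mathbf{F}(S)$ into finitely many chamber segments, passes to the case $g,h\in\mathbf{F}(C)$, and then chooses an apartment containing both $C$ and $f(\mathbb{R})$ so that all three quantities $\langle f,g\rangle$, $\langle f,h\rangle$ and $d(g,h)$ survive the passage to $\mathbf{F}(\Gr_C^\bullet)$ as \emph{equalities}; the statement then becomes Cauchy--Schwarz in that euclidean space. You bypass the subdivision entirely by noticing that the estimate is not symmetric: after subtracting, the unconditional inequality $\langle f,g\rangle\le\langle r_C(f),r_C(g)\rangle$ from~\ref{sub:ScalAndrC} and the unconditional contraction $d(r_C(g),r_C(h))\le d(g,h)$ both point in the favorable direction, so the only equality you actually need is $\langle f,h\rangle=\langle r_C(f),r_C(h)\rangle$, which holds once $S$ contains $f(\mathbb{R})\cup h(\mathbb{R})$ and $C\subset S$. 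Running the argument twice, once per sign, then gives the Lipschitz bound. This is shorter and avoids the combinatorial step of tracking the geodesic across chamber walls; the paper's version has the mild advantage of producing the stronger symmetric reduction (all four quantities preserved simultaneously) but that extra strength is not needed here, so your asymmetric trick is a clean improvement.
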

\begin{proof}
We have to show that $\left|\left\langle f,h\right\rangle -\left\langle f,g\right\rangle \right|\leq\left\Vert f\right\Vert \cdot d(g,h)$
for every $g,h\in\mathbf{F}(X)$. Pick an apartment $S$ of $X$ with
$g,h\in\mathbf{F}(S)$ and set $g_{t}=(1-t)g+th\in\mathbf{F}(S)$
for $t\in[0,1]$. Since $\mathbf{F}(S)$ is the union of finitely
many closed (convex) chambers, there is an integer $N>0$, a finite
sequence $0=t_{0}<\cdots<t_{N}=1$ and maximal chains $C_{1},\cdots,C_{N}$
in $S$ such that for every $1\leq i\leq N$ and $t\in[t_{i-1},t_{i}]$,
$g_{t}$ belongs the closed chamber $\mathbf{F}(C_{i})$. Set $g_{i}=g_{t_{i}}$
for $i\in\{0,\cdots,N\}$. Since
\[
\left|\left\langle f,h\right\rangle -\left\langle f,g\right\rangle \right|=\left|\sum_{i=1}^{N}\left\langle f,g_{i}\right\rangle -\left\langle f,g_{i-1}\right\rangle \right|\leq\sum_{i=1}^{N}\left|\left\langle f,g_{i}\right\rangle -\left\langle f,g_{i-1}\right\rangle \right|
\]
and $d(g,h)=\sum_{i=1}^{N}d(g_{i-1},g_{i})$, we may assume that $g,h\in\mathbf{F}(C)$
for some maximal chain $C$ in $X$. Now choose an apartment $S$
of $X$ containing $C$ and $f(\mathbb{R})$ and let $f',g',h'$ be
the images of $f,g,h$ under $r_{C}:\mathbf{F}(X)\rightarrow\mathbf{F}(\Gr_{C}^{\bullet})$.
Then
\[
\begin{array}{rcl}
\left\langle f,h\right\rangle  & = & \left\langle f',h'\right\rangle \\
\left\langle f,g\right\rangle  & = & \left\langle f',g'\right\rangle 
\end{array}\quad\mbox{and}\quad\begin{array}{rcl}
\left\Vert f\right\Vert  & = & \left\Vert f'\right\Vert \\
d(g,h) & = & d(g',h')
\end{array}
\]
since $f,g,h\in\mathbf{F}(S)$ with $C\subset S$. This reduces us
further to the case of a finite boolean lattice $X$, where $\mathbf{F}(X)$
is a euclidean space and our claim is trivial. 
\end{proof}

\subsection{HN-filtrations}

Suppose now that our modular lattice $X$ is also equipped with a
degree function $\deg:X\rightarrow\mathbb{R}$ and let $\left\langle \star,-\right\rangle :\mathbf{F}(X)\rightarrow\mathbb{R}$
be its unique extension to a degree function on $\mathbf{F}(X)$,
as explained in~\ref{subsec:degfctXandF(X)}. 

\subsubsection{~}

We say that $X$ is \emph{semi-stable of slope} $\mu\in\mathbb{R}$
if and only if for every $x\in X$, $\deg(x)\leq\mu\,\rank(x)$ with
equality for $x=1_{X}$. More generally for every $x\leq y$ in $X$,
we say that the interval $[x,y]$ is \emph{semi-stable of slope }$\mu$
if and only if it is semi-stable of slope $\mu$ for the induced rank
and degree functions, i.e.~for every $z\in[x,y]$, 
\[
\deg(z)\leq\mu\left(\rank(z)-\rank(y)\right)+\deg(y)
\]
with equality for $z=y$. Note that for $x=y$, $[x,y]=\{x\}$ is
semi-stable of slope $\mu$ for every $\mu\in\mathbb{R}$. For any
$x<y$, the \emph{slope} of $[x,y]$ is defined by 
\[
\mu([x,y])=\frac{\deg(y)-\deg(x)}{\rank(y)-\rank(x)}\in\mathbb{R}.
\]

\subsubsection{~}

For any $x,y,z\in X$ with $x<y<z$, we have 
\[
\mu([x,z])=\frac{\rank(z)-\rank(y)}{\rank(z)-\rank(x)}\mu([y,z])+\frac{\rank(y)-\rank(x)}{\rank(z)-\rank(x)}\mu([x,y])
\]
thus one of the following cases occurs: 
\[
\begin{array}{rl}
 & \mu([x,y])<\mu([x,z])<\mu([y,z]),\\
\mbox{or} & \mu([x,y])>\mu([x,z])>\mu([y,z]),\\
\mbox{or} & \mu([x,y])=\mu([x,z])=\mu([y,z]).
\end{array}
\]

\begin{lem}
\label{lem:LozStab}Suppose that $x\leq x'\leq y'$ and $x\leq y\leq y'$
with $[x,y]$ semi-stable of slope $\mu$ and $[x',y']$ semi-stable
of slope $\mu'$. If $\mu>\mu'$, then also $y\leq x'$.
\end{lem}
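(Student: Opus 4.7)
The plan is to compare the two ``test elements'' $y\wedge x'$ and $y\vee x'$ against the semi-stability inequalities and then use super-modularity of $\deg$ together with exactness of $\rank$ to squeeze a rank identity out of $\mu>\mu'$.

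First I would observe that $x\leq y\wedge x'\leq y$ and $x'\leq y\vee x'\leq y'$, so that $y\wedge x'$ lies in $[x,y]$ and $y\vee x'$ lies in $[x',y']$. Semi-stability of $[x,y]$ at $y\wedge x'$ gives
\[
\deg(y)-\deg(y\wedge x')\;\geq\;\mu\bigl(\rank(y)-\rank(y\wedge x')\bigr),
\]
while semi-stability of $[x',y']$ at $y\vee x'$, combined with the equality at $y'$ that defines the slope $\mu'$ of $[x',y']$, gives
\[
\deg(y\vee x')-\deg(x')\;\leq\;\mu'\bigl(\rank(y\vee x')-\rank(x')\bigr).
\]
(The second inequality is obtained by subtracting $\deg(y')=\mu'(\rank(y')-\rank(x'))+\deg(x')$ from the defining inequality for $y\vee x'\in[x',y']$.)

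Next, exactness of $\rank$ on the modular lattice $X$ gives
\[
\rank(y)-\rank(y\wedge x')\;=\;\rank(y\vee x')-\rank(x')\;=:\;r\geq 0,
\]
while super-modularity of $\deg$ yields
\[
\deg(y\vee x')-\deg(x')\;\geq\;\deg(y)-\deg(y\wedge x').
\]
Chaining these three inequalities gives $\mu'\cdot r\geq\mu\cdot r$, i.e.\ $(\mu'-\mu)\,r\geq 0$. Since $\mu>\mu'$, this forces $r=0$, hence $\rank(y\wedge x')=\rank(y)$; because $\rank$ is strictly increasing on strict inequalities, we conclude $y\wedge x'=y$, i.e.\ $y\leq x'$.

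There is no real obstacle here: the only subtle point is extracting the right form of the semi-stability bound on $[x',y']$ (one needs both the inequality at $y\vee x'$ and the equality at $y'$ to compare degrees relative to $x'$ rather than $y'$), and keeping straight which of the two slope inequalities points which way. Once the four ingredients (two semi-stability bounds, exactness of $\rank$, super-modularity of $\deg$) are aligned, the conclusion is immediate.
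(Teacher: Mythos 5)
Your proof is correct and is essentially the paper's argument unwound: the paper assumes $y\not\leq x'$ and chains $\mu\leq\mu([y\wedge x',y])\leq\mu([x',y\vee x'])\leq\mu'$ (the middle step being exactly your combination of rank exactness and super-modularity of $\deg$), reaching a contradiction, whereas you run the same three inequalities multiplied through by $r=\rank(y)-\rank(y\wedge x')$ and conclude $r=0$ directly. Same ingredients, same comparison elements $y\wedge x'$ and $y\vee x'$; no gap.
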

\begin{proof}
Suppose not, i.e.~$x'<y\vee x'$ and $y\wedge x'<y$. Then
\[
\mu\stackrel{(1)}{\leq}\mu([y\wedge x',y])\stackrel{(2)}{\leq}\mu([x',y\vee x'])\stackrel{(3)}{\leq}\mu'
\]
since $(1)$ $y\wedge x'$ belongs to $[x,y]$ which is semi-stable
of slope $\mu$, $(3)$ $y\vee x'$ belongs to $[x',y']$ which is
semi-stable of slope $\mu'$, and $(2)$ follows from the definition
of $\mu$. 
\end{proof}

\subsubsection{~}

The main result of this section is the following proposition. 
\begin{prop}
\label{prop:CarHN}For any $\mathcal{F}\in\mathbf{F}(X)$, the following
conditions are equivalent.

\begin{enumerate}
\item For every $f\in\mathbf{F}(X)$, $\left\Vert \mathcal{F}\right\Vert ^{2}-2\left\langle \star,\mathcal{F}\right\rangle \leq\left\Vert f\right\Vert ^{2}-2\left\langle \star,f\right\rangle $.
\item For every $f\in\mathbf{F}(X)$, $\left\langle \star,f\right\rangle \leq\left\langle \mathcal{F},f\right\rangle $
with equality for $f=\mathcal{F}$.
\item For every $\gamma\in\mathbb{R}$, $\Gr_{\mathcal{F}}^{\gamma}$ is
semi-stable of slope $\gamma$. 
\end{enumerate}
Moreover, there is a unique such $\mathcal{F}$, and $\left\Vert \mathcal{F}\right\Vert ^{2}=\left\langle \star,\mathcal{F}\right\rangle $.
\end{prop}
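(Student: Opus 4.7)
The plan is to establish the cycle $(1) \Leftrightarrow (2)$, $(3) \Rightarrow (2)$, $(2) \Rightarrow (3)$, together with existence and uniqueness of the minimizer $\mathcal{F}$ of $\Phi(f) := \|f\|^2 - 2\langle\star,f\rangle$.

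The key observation for $(1) \Leftrightarrow (2)$ is that $\Phi$ is strictly convex along geodesics: $\|g_t\|^2$ is a quadratic in $t$ with leading coefficient $d(g,h)^2$ (from the formula in the proof of the CAT(0) proposition) while $\langle\star, g_t\rangle$ is concave in $t$ by axioms $(1)$ and $(2)$ of~\ref{sub:degfctXandF(X)}. Assuming $(1)$, the first-variation inequality $\frac{d}{dt}\Phi((1-t)\mathcal{F} + tf)\bigr|_{t=0^+} \geq 0$ rearranges to $\langle\mathcal{F}, f\rangle - \|\mathcal{F}\|^2 \geq \langle\star, f\rangle - \langle\star, \mathcal{F}\rangle$; testing this with $f = X(0)$ and $f = 2\cdot\mathcal{F}$ pins down the identity $\|\mathcal{F}\|^2 = \langle\star,\mathcal{F}\rangle$, and then the displayed inequality becomes exactly $(2)$. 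Conversely, $(2)$ gives $\|\mathcal{F}\|^2 = \langle\star,\mathcal{F}\rangle$ via the equality clause, and then $\Phi(f) - \Phi(\mathcal{F}) \geq \|f\|^2 + \|\mathcal{F}\|^2 - 2\langle\mathcal{F},f\rangle = d(f,\mathcal{F})^2 \geq 0$ gives $(1)$ together with uniqueness.

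For $(3) \Rightarrow (2)$, encode $\mathcal{F}$ by $C = F(\mathcal{F}) = \{0_X = x_0 < \cdots < x_s = 1_X\}$ and jumps $\mu_1 > \cdots > \mu_s$. For any $f$, Birkhoff-Dedekind provides an apartment containing $C \cup f(\mathbb{R})$; then~\ref{sub:ScalAndrC} gives $\langle\mathcal{F}, f\rangle = \langle r_C(\mathcal{F}), r_C(f)\rangle$ and~\ref{sub:degfctXandF(X)} gives $\langle\star, f\rangle \leq \langle\star_C, r_C(f)\rangle$; the inequality then decouples in $\Gr_C^\bullet = \prod_i [x_{i-1}, x_i]$ into the semi-stable case $\langle\star_i, f_i^*\rangle \leq \mu_i \langle X_i(1), f_i^*\rangle$ on each piece, which is an Abel-summation argument combined with $\deg_i(y) \leq \mu_i\rank_i(y)$ on $[x_{i-1}, x_i]$ (equality at the top). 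Equality at $f = \mathcal{F}$ is the telescope $\langle\star,\mathcal{F}\rangle = \sum_i \mu_i(\deg(x_i) - \deg(x_{i-1})) = \sum_i \mu_i^2(\rank(x_i) - \rank(x_{i-1})) = \|\mathcal{F}\|^2$. For $(2) \Rightarrow (3)$, two test families suffice: varying $f$ inside the open facet $F^{-1}(C)$ turns $(2)$ into a linear inequality in the jump vector with equality at the interior point $\underline{\mu}$, which forces $\mu([x_{i-1}, x_i]) = \mu_i$ for each $i$; and for each $z$ with $x_{i-1} \leq z < x_i$ and small $t > 0$, refining the chain to $C \cup \{z\}$ with jumps $(\mu_1, \ldots, \mu_{i-1}, \mu_i + t, \mu_i, \mu_{i+1}, \ldots, \mu_s)$ yields an $f_t$ for which direct computation in a common apartment (using $\|\mathcal{F}\|^2 = \langle\star,\mathcal{F}\rangle$) gives
\[
\langle\mathcal{F}, f_t\rangle - \langle\star, f_t\rangle = t\bigl[\mu_i(\rank(z) - \rank(x_{i-1})) - (\deg(z) - \deg(x_{i-1}))\bigr],
\]
and $(2)$ forces the bracket to be $\geq 0$, which is exactly semi-stability of $[x_{i-1}, x_i]$ at slope $\mu_i$ evaluated at $z$.

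For existence I would minimize $\Phi$ on the complete CAT(0) space $\mathbf{F}(X)$: strict convexity plus lower semi-continuity (from Proposition~\ref{prop:Contdeg}) plus the CAT(0) parallelogram inequality applied to midpoints shows that any minimizing sequence is Cauchy, hence convergent to a minimum by completeness, provided $\inf\Phi > -\infty$. The hard part, I expect, is establishing this uniform lower bound: one wants a Cauchy-Schwarz estimate $|\langle\star, f\rangle| \leq C\|f\|$ with a constant $C$ independent of the apartment, which requires controlling the sum $\sum_{x \in \Ji(S)} \deg'_S(x)^2/\delta_S(x)$ uniformly via the bounds in~\ref{sub:GrC} together with the uniform positivity of rank jumps. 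The other delicate step is the perturbation computation for $f_t$ in $(2) \Rightarrow (3)$, where the cancellation $\|\mathcal{F}\|^2 = \langle\star,\mathcal{F}\rangle$ from the first step is what cleanly removes the zeroth-order terms and leaves the semi-stability inequality as the first-order condition.
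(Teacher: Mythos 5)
Your proof is correct, and its overall architecture differs from the paper's in one genuine way: you close the equivalence by proving $(3)\Rightarrow(2)$ directly (decoupling over $\Gr_{C}^{\bullet}$ and Abel-summing the semi-stability inequalities on each factor) and you get uniqueness from the strict-convexity identity $\Phi(f)-\Phi(\mathcal{F})\geq d(f,\mathcal{F})^{2}$, which follows from $(2)$. The paper instead proves only $(1)\Rightarrow(2)\Rightarrow(3)$ together with existence in $(1)$ and uniqueness in $(3)$, the latter by a purely lattice-theoretic double induction using lemma~\ref{lem:LozStab} (the "no nonzero maps from higher to lower slope" principle). Your route is more self-contained on the metric side and avoids lemma~\ref{lem:LozStab} entirely; the paper's route has the advantage of producing the lattice-theoretic uniqueness statement that is reused later in the categorical setting. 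Your perturbation arguments for $(1)\Rightarrow(2)$ (first variation along $g_{t}$, then testing $f=X(0)$ and $f=2\mathcal{F}$ to pin down $\left\Vert \mathcal{F}\right\Vert ^{2}=\left\langle \star,\mathcal{F}\right\rangle $) and for $(2)\Rightarrow(3)$ (linearity on the open facet forcing $\mu([x_{i-1},x_{i}])=\mu_{i}$, plus the refinement $C\cup\{z\}$ with an extra jump at $\mu_{i}+t$) are essentially the paper's two test families, and the displayed first-order identity is exactly right.

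The one step you flag as hard --- the lower bound $\inf\Phi>-\infty$ --- is in fact the easiest part, and the route you sketch for it is the wrong one. You do not need a two-sided estimate $\left|\left\langle \star,f\right\rangle \right|\leq C\left\Vert f\right\Vert $ with $C$ uniform over apartments; such a bound can genuinely fail, since a degree function on $X$ need only be bounded above (by~\ref{sub:GrC}), not below, so the increments $\deg(c_{i})-\deg(c_{i-1})$ along varying maximal chains need not be uniformly controlled. Only the one-sided bound $\left\langle \star,f\right\rangle \leq A\left\Vert f\right\Vert $ is needed, and it comes from a \emph{single fixed} maximal chain $C$: by~\ref{sub:degfctXandF(X)} one has $\left\langle \star,f\right\rangle \leq\left\langle \star,r_{C}(f)\right\rangle $ for \emph{every} $f$ (no compatibility between $f$ and $C$ required), while $\left\Vert r_{C}(f)\right\Vert =\left\Vert f\right\Vert $, and on the finite-dimensional Euclidean space $\mathbf{F}(\Gr_{C}^{\bullet})$ the degree is a linear form, hence dominated by $A\left\Vert -\right\Vert $. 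With that correction your existence argument (minimizing sequence, CAT(0) parallelogram inequality plus concavity to get Cauchy, completeness, and upper semicontinuity of $\left\langle \star,-\right\rangle $ from proposition~\ref{prop:Contdeg}) is exactly the paper's.
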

\begin{proof}
It is sufficient to establish $(1)\Rightarrow(2)\Rightarrow(3)$,
and the existence (resp.~uniqueness) of an $\mathcal{F\in\mathbf{F}}(X)$
satisfying $(1)$ (resp.~$(3)$). We start with the following claim. 

\emph{There is a constant $A>0$ such that $\left\langle \star,f\right\rangle \leq A\left\Vert f\right\Vert $.
}Indeed, pick any maximal chain $C$ in $X$. Then $\left\langle \star,f\right\rangle \leq\left\langle \star,r_{C}(f)\right\rangle $
and $\left\Vert f\right\Vert =\left\Vert r_{C}(f)\right\Vert $ for
every $f\in\mathbf{F}(X)$. But on the finite dimensional $\mathbb{R}$-vector
space $\mathbf{F}(\Gr_{C}^{\bullet})$, $\left\langle \star,-\right\rangle :\mathbf{F}(\Gr_{C}^{\bullet})\rightarrow\mathbb{R}$
is a linear form while $\left\Vert -\right\Vert :\mathbf{F}(\Gr_{C}^{\bullet})\rightarrow\mathbb{R}_{+}$
is a euclidean norm. Our claim easily follows.

\emph{Existence in $(1)$. }Since $\left\langle \star,f\right\rangle \leq A\left\Vert f\right\Vert $,
the function $f\mapsto\left\Vert f\right\Vert ^{2}-2\left\langle \star,f\right\rangle $
is bounded below. Let $(f_{n})$ be any sequence in $\mathbf{F}(X)$
such that $\left\Vert f_{n}\right\Vert ^{2}-2\left\langle \star,f_{n}\right\rangle $
converges to $I=\inf\left\{ \left\Vert f\right\Vert ^{2}-2\left\langle \star,f\right\rangle :f\in\mathbf{F}(X)\right\} $.
By the CAT(0)-inequality, 
\[
2\left\Vert {\textstyle \frac{1}{2}}f_{n}+{\textstyle \frac{1}{2}}f_{m}\right\Vert ^{2}+{\textstyle \frac{1}{2}}d(f_{n},f_{m})^{2}\le\left\Vert f_{n}\right\Vert ^{2}+\left\Vert f_{m}\right\Vert ^{2}.
\]
By concavity of $f\mapsto\left\langle \star,f\right\rangle $, 
\[
\left\langle \star,{\textstyle \frac{1}{2}}f_{n}+{\textstyle \frac{1}{2}}f_{m}\right\rangle \geq{\textstyle \frac{1}{2}}\left\langle \star,f_{n}\right\rangle +{\textstyle \frac{1}{2}}\left\langle \star,f_{m}\right\rangle .
\]
We thus obtain
\[
\begin{array}{c}
2I+\frac{1}{2}d(f_{n},f_{m})^{2}\leq2\left(\left\Vert \frac{1}{2}f_{n}+\frac{1}{2}f_{m}\right\Vert ^{2}-2\left\langle \star,{\textstyle \frac{1}{2}}f_{n}+{\textstyle \frac{1}{2}}f_{m}\right\rangle \right)+\frac{1}{2}d(f_{n},f_{m})^{2}\\
\leq\left(\left\Vert f_{n}\right\Vert ^{2}-2\left\langle \star,f_{n}\right\rangle \right)+\left(\left\Vert f_{m}\right\Vert ^{2}-2\left\langle \star,f_{m}\right\rangle \right).
\end{array}
\]
It follows that $(f_{n})$ is a Cauchy sequence in $\mathbf{F}(X)$,
and therefore converges to some $\mathcal{F}\in\mathbf{F}(X)$. Then
$\left\Vert f_{n}\right\Vert \rightarrow\left\Vert \mathcal{F}\right\Vert $
and $\left\langle \star,f_{n}\right\rangle \rightarrow\frac{1}{2}\left(\left\Vert \mathcal{F}\right\Vert ^{2}-I\right)$.
By~proposition~\ref{prop:Contdeg}, $\left\Vert \mathcal{F}\right\Vert ^{2}-2\left\langle \star,\mathcal{F}\right\rangle \leq I$
thus actually $\left\Vert \mathcal{F}\right\Vert ^{2}-2\left\langle \star,\mathcal{F}\right\rangle =I$
by definition of $I$. 

$(1)$\emph{ implies $(2)$. }Suppose $(1)$. Then for any $f\in\mathbf{F}(X)$
and $t\geq0$, 
\[
\left\Vert \mathcal{F}\right\Vert ^{2}-2\left\langle \star,\mathcal{F}\right\rangle \leq\left\Vert \mathcal{F}+tf\right\Vert ^{2}-2\left\langle \star,\mathcal{F}+tf\right\rangle .
\]
Since $\left\Vert \mathcal{F}+tf\right\Vert ^{2}=\left\Vert \mathcal{F}\right\Vert ^{2}+t^{2}\left\Vert f\right\Vert ^{2}+2t\left\langle \mathcal{F},f\right\rangle $
and $\left\langle \star,\mathcal{F}+tf\right\rangle \geq\left\langle \star,\mathcal{F}\right\rangle +t\left\langle \star,f\right\rangle $,
\[
0\leq t^{2}\left\Vert f\right\Vert ^{2}+2t\left(\left\langle \mathcal{F},f\right\rangle -\left\langle \star,f\right\rangle \right).
\]
Since this holds for every $t\geq0$, indeed $\left\langle \star,f\right\rangle \leq\left\langle \mathcal{F},f\right\rangle $.
On the other hand, 
\[
\left\Vert \mathcal{F}\right\Vert ^{2}-2\left\langle \star,\mathcal{F}\right\rangle \leq\left\Vert t\mathcal{F}\right\Vert ^{2}-2\left\langle \star,t\mathcal{F}\right\rangle =t^{2}\left\Vert \mathcal{F}\right\Vert ^{2}-2t\left\langle \star,\mathcal{F}\right\rangle 
\]
for all $t\geq0$, therefore also $\left\Vert \mathcal{F}\right\Vert ^{2}=\left\langle \star,\mathcal{F}\right\rangle $. 

$(2)$ \emph{implies $(3)$. }Suppose $(2)$. Let $s$ be the number
of jumps of $\mathcal{F}$ and set 
\[
\mathcal{F}(\mathbb{R})=\left\{ c_{0}<\cdots<c_{s}\right\} \quad\mbox{and}\quad\Jump(\mathcal{F})=\left\{ \gamma_{1}>\cdots>\gamma_{s}\right\} .
\]
For $i\in\{1,\cdots,s\}$ and $\theta$ sufficiently close to $\gamma_{i}$,
let $f_{i,\theta}$ be the unique $\mathbb{R}$-filtration on $X$
such that $f_{i,\theta}(\mathbb{R})=\mathcal{F}(\mathbb{R})$ and
$\Jump(f_{i,\theta})\setminus\{\theta\}=\Jump(\mathcal{F})\setminus\{\gamma_{i}\}$.
Then 
\begin{eqnarray*}
\left\langle \star,f_{i,\theta}\right\rangle -\theta\deg\left(\Gr_{\mathcal{F}}^{\gamma_{i}}\right) & = & \left\langle \star,\mathcal{F}\right\rangle -\gamma_{i}\deg\left(\Gr_{\mathcal{F}}^{\gamma_{i}}\right)\\
\mbox{and}\quad\left\langle \mathcal{F},f_{i,\theta}\right\rangle -\theta\gamma_{i}\,\rank\left(\Gr_{\mathcal{F}}^{\gamma_{i}}\right) & = & \left\langle \mathcal{F},\mathcal{F}\right\rangle -\gamma_{i}^{2}\,\rank\left(\Gr_{\mathcal{F}}^{\gamma_{i}}\right).
\end{eqnarray*}
Since $\left\langle \star,f_{i,\theta}\right\rangle \leq\left\langle \mathcal{F},f_{i,\theta}\right\rangle $
and $\left\langle \star,\mathcal{F}\right\rangle =\left\langle \mathcal{F},\mathcal{F}\right\rangle $,
it follows that
\[
(\theta-\gamma_{i})\left(\gamma_{i}\,\rank\left(\Gr_{\mathcal{F}}^{\gamma_{i}}\right)-\deg\left(\Gr_{\mathcal{F}}^{\gamma_{i}}\right)\right)\geq0.
\]
Since this holds for every $\theta$ close to $\gamma_{i}$, it must
be that $\gamma_{i}=\mu\left(\Gr_{\mathcal{F}}^{\gamma_{i}}\right).$
Now for any $c_{i-1}<z<c_{i}$ and a sufficiently small $\epsilon>0$,
let $f_{i,z,\epsilon}$ be the unique $\mathbb{R}$-filtration on
$X$ such that $f_{i,z,\epsilon}(\mathbb{R})=\mathcal{F}(\mathbb{R})\cup\{z\}$
and $\Jump(f_{i,z,\epsilon})=\Jump(\mathcal{F})\cup\{\gamma_{i}+\epsilon\}$.
Then
\begin{eqnarray*}
\left\langle \star,f_{i,z,\epsilon}\right\rangle  & = & \left\langle \star,\mathcal{F}\right\rangle +\epsilon\deg\left({\textstyle \frac{z}{c_{i-1}}}\right)\\
\mbox{and}\quad\left\langle \mathcal{F},f_{i,z,\epsilon}\right\rangle  & = & \left\langle \mathcal{F},\mathcal{F}\right\rangle +\epsilon\gamma_{i}\,\rank\left({\textstyle \frac{z}{c_{i-1}}}\right).
\end{eqnarray*}
Since again $\left\langle \star,f_{i,z,\epsilon}\right\rangle \leq\left\langle \mathcal{F},f_{i,z,\epsilon}\right\rangle $
and $\left\langle \star,\mathcal{F}\right\rangle =\left\langle \mathcal{F},\mathcal{F}\right\rangle $,
we obtain
\[
\deg\left({\textstyle \frac{z}{c_{i-1}}}\right)\leq\gamma_{i}\,\rank\left({\textstyle \frac{z_{i}}{c_{i-1}}}\right).
\]
Thus $\Gr_{\mathcal{F}}^{\gamma_{i}}$ is indeed semi-stable of slope
$\gamma_{i}$ for all $i\in\{1,\cdots,s\}$.

\emph{Unicity in $(3)$. }Suppose that $\mathcal{F}$ and $\mathcal{F}'$
both satisfy $(3)$ and set
\[
\left\{ \gamma_{1}>\cdots>\gamma_{s}\right\} =\Jump(\mathcal{F})\cup\Jump(\mathcal{F}'),\quad\gamma_{0}=\gamma_{1}+1.
\]
We show by ascending induction on $i\in\{0,\cdots,s\}$ and descending
induction on $j\in\{i,\cdots,s\}$ that $\mathcal{F}(\gamma_{i})\leq\mathcal{F}'(\gamma_{j})$.
For $i=0$ or $j=s$ there is nothing to prove since $\mathcal{F}(\gamma_{0})=0_{X}$
and $\mathcal{F}'(\gamma_{s})=1_{X}$. Suppose now that $1\leq i\leq j<s$
and we already now $\mathcal{F}(\gamma_{i-1})\leq\mathcal{F}'(\gamma_{i-1})$
and $\mathcal{F}(\gamma_{i})\leq\mathcal{F}'(\gamma_{j+1})$. Then
$\mathcal{F}(\gamma_{i})\leq\mathcal{F}'(\gamma_{j})$ by lemma~\ref{lem:LozStab}.
Thus $\mathcal{F}(\gamma_{i})\leq\mathcal{F}'(\gamma_{i})$ for all
$i\in\{1,\cdots,s\}$. By symmetry $\mathcal{F}=\mathcal{F}'$.
\end{proof}
\begin{defn}
We call $\mathcal{F}\in\mathbf{F}(X)$ the \emph{Harder-Narasimhan
filtration} of $(X,\deg)$.
\end{defn}

\subsubsection{Example\label{subsec:ExampleProj}}

For $f\in\mathbf{F}(X)$ and the degree function $\deg_{f}(x)=\left\langle f,x\right\rangle $
on $X$, the Harder-Narasimhan filtration $\mathcal{F}\in\mathbf{F}(X)$
of $(X,\deg_{f})$ minimizes 
\[
g\mapsto\left\Vert f\right\Vert ^{2}+\left\Vert g\right\Vert ^{2}-2\left\langle f,g\right\rangle =d(f,g)^{2}
\]
thus plainly $\mathcal{F}=f$. More generally suppose that $Y$ is
a $\{0,1\}$-sublattice of $X$ with the induced rank function. Then
$\mathbf{F}(Y)\hookrightarrow\mathbf{F}(X)$ is an isometric embedding,
with a non-expanding retraction, namely the convex projection $p:\mathbf{F}(X)\twoheadrightarrow\mathbf{F}(Y)$
of \cite[II.2.4]{BrHa99}. Then for any $f\in\mathbf{F}(X)$, $y\mapsto\left\langle f,y\right\rangle $
is a degree function on $Y$ and the corresponding Harder-Narasimhan
filtration $\mathcal{F}\in\mathbf{F}(Y)$ equals $p(f)$. In particular,
\[
\left\langle f,g\right\rangle \leq\left\langle p(f),g\right\rangle 
\]
for every $f\in\mathbf{F}(X)$ and $g\in\mathbf{F}(Y)$ with equality
for $g=p(f)$.

\subsubsection{~}

If $X$ is complemented and $\deg:X\rightarrow\mathbb{R}$ is exact,
the Harder-Narasimhan filtration may also be characterized by the
following weakening of condition $(2)$: 

\emph{$(2')$ For every $f\in\mathbf{F}(X)$, $\left\langle \star,f\right\rangle \leq\left\langle \mathcal{F},f\right\rangle $. }

\noindent We have to show that for any $\mathcal{F}\in\mathbf{F}(X)$
satisfying $(2')$, $\left\langle \star,\mathcal{F}\right\rangle \geq\left\langle \mathcal{F},\mathcal{F}\right\rangle $.
Since $X$ is complemented, there is an $\mathbb{R}$-filtration $\mathcal{F}'$
on $X$ which is opposed to $\mathcal{F}$. Since $\deg$ is exact,
$f\mapsto\left\langle \star,f\right\rangle $ is additive, thus $\left\langle \star,\mathcal{F}\right\rangle +\left\langle \star,\mathcal{F}'\right\rangle =\left\langle \star,\mathcal{F}+\mathcal{F}'\right\rangle =0$
and indeed
\[
\left\langle \star,\mathcal{F}\right\rangle =-\left\langle \star,\mathcal{F}'\right\rangle \geq-\left\langle \mathcal{F},\mathcal{F}'\right\rangle =\left\langle \mathcal{F},\mathcal{F}\right\rangle .
\]
This also shows that then $\left\langle \star,\mathcal{F}'\right\rangle =\left\langle \mathcal{F},\mathcal{F}'\right\rangle $
for any $\mathcal{F}'\in\mathbf{F}(X)$ opposed to $\mathcal{F}$. 

\section{The Harder-Narasimhan formalism for categories (after André)\label{sec:HN4Cat}}

\subsection{Basic notions}

Let $\C$ be a category with a null object $0$, with kernels and
cokernels. Let $\sk\,\C$ be the \emph{skeleton} of $\C$: the isomorphism
classes of objects in $\C$. 

\subsubsection{~}

Let $X$ be an object of $\C$. Recall that a \emph{subobject} of
$X$ is an isomorphism class of monomorphisms with codomain $X$.
We write $x\hookrightarrow X$ for the subobject itself or any monomorphism
in its class. We say that $f:x\hookrightarrow X$ is \emph{strict}
if $f$ is a kernel. Equivalently, $f$ is strict if and only if $f=\im(f)$.
Dually, we have the notions of \emph{quotients} and \emph{strict quotients},
and $f\mapsto\coker f$ yields a bijection between strict subobjects
and strict quotients of $X$, written $x\mapsto X/x$. A \emph{short
exact sequence} is a pair of composable morphisms $f$ and $g$ such
that $f=\ker g$ and $g=\coker f$: it is thus of the form $0\rightarrow x\rightarrow X\rightarrow X/x\rightarrow0$
for some strict subobject $x$ of $X$. 

\subsubsection{~}

The class of all strict subobjects of $X$ will be denoted by $\Sub(X)$.
It is partially ordered: $(f:x\hookrightarrow X)\leq(f':x'\hookrightarrow X)$
if and only if there is a morphism $h:x\rightarrow x'$ such that
$f=f'\circ h$. Note that the morphism $h$ is then unique, and is
itself a strict monomorphism, realizing $x$ as a strict subobject
of $x'$. Conversely, a strict subobject $x$ of $x'$ yields a subobject
of $X$ which is not necessarily strict. 

\subsubsection{~\label{subsec:pullbackofstrictexists}}

The pull-back of a strict monomorphism $x\hookrightarrow X$ by any
morphism $Y\rightarrow X$ exists, and it is a strict monomorphism
$y\hookrightarrow Y$: it is the kernel of $Y\rightarrow X\rightarrow X/x$.
Dually, the push-out of a strict epimorphism $X\twoheadrightarrow X/x$
by any morphism $X\rightarrow Y$ exists, and it is a strict epimorphism
$Y\rightarrow Y/y$: it is the cokernel of $x\hookrightarrow X\rightarrow Y$. 

\subsubsection{~\label{subsec:joinmeet}}

Suppose that $\C$ is essentially small and the fiber product of any
pair of strict monomorphisms $x\hookrightarrow X$ and $y\hookrightarrow X$
(which exists by~\ref{subsec:pullbackofstrictexists}) induces a
\emph{strict} monomorphism $x\times_{X}y\rightarrow X$. Then $\Sub(X)$
is a set and $(\Sub(X),\leq)$ is a bounded lattice, with maximal
element $X$ and minimal element $0$. The meet of $x,y\in\Sub(X)$
is the image of $x\times_{X}y\rightarrow X$, also given by the less
symmetric formulas 
\[
x\wedge y=\ker(x\rightarrow X/y)=\ker(y\rightarrow X/x).
\]
The join of $x,y$ is the kernel of the morphism from $X$ to the
amalgamated sum of $X\rightarrow X/x$ and $X\rightarrow X/y$, also
given by the less symmetric formulas
\[
x\vee y=\ker\left(X\rightarrow\coker\left(x\rightarrow X/y\right)\right)=\ker\left(X\rightarrow\coker\left(y\rightarrow X/x\right)\right).
\]

\subsubsection{~\label{subsec:defrkdegonC}}

A \emph{degree function} on $\C$ is a function $\deg:\sk\,\C\rightarrow\mathbb{R}$
which is additive on short exact sequences and such that if $f:X\rightarrow Y$
is any morphism in $\C$, then $\deg(\coim f)\leq\deg(\im f)$. It
is \emph{exact} if $-\deg:\sk\,\C\rightarrow\mathbb{R}$ is also a
degree function on $\C$. A \emph{rank function} on $\C$ is an exact
degree function $\rank:\sk\,\C\rightarrow\mathbb{R}_{+}$ such that
for every $X\in\sk\,\C$, $\rank(X)=0$ if and only if $X=0$.

\subsubsection{~}

Under the assumptions of \ref{subsec:joinmeet}, if $\deg:\sk\,\C\rightarrow\mathbb{R}$
is a degree function on $\C$, then for every object $X$ of $\C$,
$x\mapsto\deg(x)$ is a degree function on $\Sub(X)$. Indeed, for
every $x,y\in\Sub(X)$, we have a commutative diagram with exact rows
\[
\xyC{1.5pc}\xyR{1.5pc}\xymatrix{0\ar[r] & x\wedge y\ar[r] & x\ar[r]\ar[d]^{f} & x/x\wedge y\ar[r]\ar[d]^{\overline{f}} & 0\\
0 & Q\ar[l] & X/y\ar[l]^{\pi} & I\ar[l] & 0\ar[l]\\
0 & X/x\vee y\ar[l]\ar[u]^{\overline{g}} & X\ar[l]\ar[u]^{g} & x\vee y\ar[l] & 0\ar[l]
}
\]
where $I=\im(f)$ and $Q=\coker(f)=\im(\pi\circ g)$ with $x/x\wedge y=\coim(f)$
and $X/x\vee y=\coim(\pi\circ g)$. It follows that
\[
\begin{array}{lrcl}
 & \deg(x)-\deg(x\wedge y)=\deg(x/x\wedge y) & \leq & \deg I=\deg(X/y)-\deg(Q)\\
\mbox{and\quad} & \deg(X)-\deg(x\vee y)=\deg(X/x\vee y) & \leq & \deg Q
\end{array}
\]
thus since also $\deg(X/y)=\deg(X)+\deg(y)$, 
\[
\deg(x)+\deg(y)\leq\deg(x\wedge y)+\deg(x\vee y).
\]
If $\deg:\sk\,\C\rightarrow\mathbb{R}$ is exact, so is $\deg:\Sub(X)\rightarrow\mathbb{R}$.
If $\rank:\sk\,\C\rightarrow\mathbb{R}_{+}$ is a rank function, then
so is $\rank:\Sub(X)\rightarrow\mathbb{R}_{+}$. 

\subsubsection{~\label{subsec:propofCwithrank}}

Suppose that $\C$ satisfies the assumptions of \ref{subsec:joinmeet}
and admits an integer-valued rank function $\rank:\sk\,\C\rightarrow\mathbb{N}$.
We then have the following properties:
\begin{itemize}
\item \emph{$\C$ is modular of finite length in the following sense: for
every object $X$ of $\C$, the lattice $\left(\Sub(X),\leq\right)$
of strict subobjects of $X$ is modular of finite length.}
\end{itemize}
This follows from~\ref{subsec:RankImpliesModular}. We write $\length(X)$
for the length of $\Sub(X)$. 
\begin{itemize}
\item \emph{For every $X\in\C$ and any $x$ in $\Sub(X)$, the following
maps are mutually inverse rank-preserving isomorphisms of lattices:}\emph{\small{}
\[
\xyC{1pc}\xyR{0.5ex}\xymatrix{[0,x]\ar@{<->}[r] & \Sub(x) &  & [x,X]\ar@{<->}[r] & \Sub(X/x)\\
y\ar@{|->}[r] & y & \mbox{and} & y\ar@{|->}[r] & \im(y\rightarrow X/x)\\
\im(z\rightarrow X) & z\ar@{|->}[l] &  & \ker\left(X\rightarrow(X/x)/z\right) & z\ar@{|->}[l]
}
\]
}{\small \par}
\item \emph{For any $f:Z\rightarrow Y$ in $\C$ with trivial kernel and
cokernel, the following maps are rank-preserving mutually inverse
isomorphisms of lattices:
\[
\xyC{1pc}\xyR{0.5ex}\xymatrix{\Sub(Y)\ar@{<->}[r] & \Sub(Z)\\
y\ar@{|->}[r] & \ker(Z\rightarrow Y/y)\\
\im(z\rightarrow Y) & z\ar@{|->}[l]
}
\]
} 
\end{itemize}
Write $(\alpha,\beta)$ for any of these pairs of maps. One checks
that for $y$ and $z$ as above, 
\[
\beta\circ\alpha(y)\leq y\quad\mbox{and}\quad z\leq\alpha\circ\beta(z).
\]
It is therefore sufficient to establish that all of our maps are rank-preserving
(the rank on $[x,X]$ maps $y$ to $\rank(y)-\rank(x)=\rank(y/x)$).
Writing $(\alpha_{i},\beta_{i})$ for the $i$-th pair, this is obvious
for $\alpha_{1}$; for $\beta_{1}$, $\im(z\rightarrow X)$ and $z=\coim(z\rightarrow X)$
have the same rank; for $\alpha_{2}$, $\im(y\rightarrow X/x)$ and
$y/x=\coim(y\rightarrow X/x)$ have the same rank; for $\beta_{2}$,
$X\rightarrow(X/x)/z$ is an epimorphism, its coimage $X/\beta_{2}(z)$
and image $(X/x)/z$ thus have the same rank, and so do $\beta_{2}(z)/x$
and $z$; for $\alpha_{3}$, the cokernel of $Z\rightarrow Y/y$ is
trivial, thus $Y/y=\im(Z\rightarrow Y/y)$ and $Z/\alpha_{3}(y)=\coim(Z\rightarrow Y/y)$
have the same rank, and so do $y$ and $\alpha_{3}(y)$ since also
$\rank(Z)=\rank(Y)$; for $\beta_{3}$, the kernel of $z\rightarrow Y$
is trivial, thus $z=\coim(z)$ and $\beta_{3}(z)=\im(z\rightarrow Y)$
have the same rank.
\begin{itemize}
\item \emph{The composition of two strict monomorphism (resp.~epimorphisms)
is a strict monomorphism (resp.~epimorphisms). }
\item \emph{For every $X\in\C$ and $a\leq b$ in $\Sub(X)$, the following
maps are mutually inverse rank-preserving isomorphisms of lattices
\[
\xyC{1pc}\xyR{0.5ex}\xymatrix{[a,b]\ar@{<->}[r] & \Sub(b/a)\\
x\ar@{|->}[r] & \im(x\rightarrow b/a)\\
\ker(b\rightarrow(b/a)/y) & y\ar@{|->}[l]
}
\]
}
\end{itemize}
This follows easily from the previous statements.
\begin{itemize}
\item \emph{For any morphism $f:X\rightarrow Y$, the induced morphism $\overline{f}:\coim(f)\rightarrow\im(f)$
has trivial kernel and cokernel.}
\end{itemize}
The kernel of $\overline{f}$ always pulls-back through $X\rightarrow\coim(f)$
to the kernel of $f$, so it now also has to be the image of that
kernel, which is trivial by definition of $\coim(f)$. Similarly,
the image of $\overline{f}$ always pushes-out through $\im(f)\rightarrow Y$
to the image of $f$, so it now has to be this image, i.e.~$\coker(\overline{f})=0$. 
\begin{itemize}
\item \emph{The length function $\length:\sk\,\C\rightarrow\mathbb{N}$
is an integer-valued rank function.}
\end{itemize}
Indeed, for a short exact sequence $0\rightarrow x\rightarrow X\rightarrow X/x\rightarrow0$
in $\C$, 
\begin{eqnarray*}
\length(X) & = & \length(\Sub(X))\\
 & = & \length([0,x])+\length([x,X])\\
 & = & \length(\Sub(x))+\length(\Sub(X/x))\\
 & = & \length(x)+\length(X/x)
\end{eqnarray*}
and for any morphism $f:X\rightarrow Y$, since $\ker(\overline{f})=0=\coker(\overline{f})$,
\[
\length(\coim(f))=\length(\Sub(\coim(f))=\length(\Sub(\im(f))=\length(\im(f)).
\]

\subsubsection{~}

Suppose that $\C$ is a proto-abelian category in the sense of André~\cite[\S 2]{An09}:
$(1)$ every morphism with zero kernel (resp.~cokernel) is a monomorphism
(resp.~an epimorphism) and $(2)$ the pull-back of a strict epimorphism
by a strict monomorphism is a strict epimorphism and the push-out
of a strict monomorphism by a strict epimorphism is a strict monomorphism.
In this case, a degree function on $\C$ is a function $\deg:\sk\,\C\rightarrow\mathbb{R}$
which is additive on short exact sequences and non-decreasing on mono-epi's
(=morphisms which are simultaneously monomorphisms and epimorphisms).
Our definitions for rank and degree functions on such a category $\C$
are thus more restrictive than those of André (beyond the differences
between the allowed codomains of these functions): he only requires
the slope $\mu=\deg/\rank$ to be non-decreasing on mono-epi's, while
we simultaneously require the denominator to be constant and the numerator
to be non-decreasing on mono-epi's. In all the examples we know, the
rank functions satisfy our assumptions.

\subsection{HN-filtrations}

Let $\C$ be an essentially small category with null objects, kernels
and cokernels, such that the fiber product of strict subobjects $x,y\hookrightarrow X$
is a strict subobject $x\wedge y\hookrightarrow X$, and let $\rank:\sk\,\C\rightarrow\mathbb{N}$
be a fixed, integer-valued rank function on $\C$. 

\subsubsection{~}

For every object $X$ of $\C$, write $\mathbf{F}(X)$ for the set
of $\mathbb{R}$-filtrations on the modular lattice $\Sub(X)$. Thus
$\mathbf{F}(X)=\mathbf{F}(\Sub(X))$ is the set of ``$\mathbb{R}$-filtrations
on $X$ by strict subobjects''. It is equipped with its scalar multiplication,
symmetric addition, its collection of apartments and facet decomposition.
The rank function on $\C$ moreover induces a rank function on $\Sub(X)$,
which equips $\mathbf{F}(X)$ with a scalar product $\left\langle -,-\right\rangle $,
a norm $\left\Vert -\right\Vert $, a complete CAT(0)-distance $d(-,-)$,
the underlying topology, and the standard degree function $\deg:\mathbf{F}(X)\rightarrow\mathbb{R}$
which maps $\mathcal{F}$ to 
\[
\deg(\mathcal{F})=\left\langle X(1),\mathcal{F}\right\rangle =\sum_{\gamma\in\mathbb{R}}\gamma\,\rank\left(\Gr_{\mathcal{F}}^{\gamma}\right).
\]
Here $X(\mu)$ is the $\mathbb{R}$-filtration on $X$ with a single
jump at $\mu$ and we may either view $\Gr_{\mathcal{F}}^{\gamma}$
as an interval in $\Sub(X)$, or as the corresponding strict subquotient
of $X$. For a strict subquotient $y/x$ of $X$ and $\mathcal{F}\in\mathbf{F}(X)$,
we denote by $\mathcal{F}_{y/x}$ the induced $\mathbb{R}$-filtration
on $y/x$, given by $\mathcal{F}_{y/x}(\gamma)=(\mathcal{F}(\gamma)\wedge y)\vee x/x=(\mathcal{F}(\gamma)\vee x)\wedge y/x$. 

\subsubsection{~\label{subsec:CatF(C)}}

We denote by $\F(\C)$ the category whose objects are pairs $(X,\mathcal{F})$
with $X\in\C$ and $\mathcal{F}\in\mathbf{F}(X)$. A \emph{morphism}
$(X,\mathcal{F})\rightarrow(Y,\mathcal{G})$ in $\F(\C)$ is a morphism
$f:X\rightarrow Y$ in $\C$ such that for any $\gamma\in\mathbb{R}$,
$f(\mathcal{F}(\gamma))\subseteq\mathcal{G}(\gamma)$. Here $f:\Sub(X)\rightarrow\Sub(Y)$
maps $x$ to 
\[
\im(x\hookrightarrow X\stackrel{f}{\longrightarrow}Y)
\]
and we have switched to the notation $\subseteq$ for the partial
order $\leq$ on $\Sub(-)$. The category $\F(\C)$ is essentially
small, and it also has a zero object, kernels and cokernels. For the
above morphism, they are respectively given by $(\ker(f),\mathcal{F}_{\ker(f)})$
and $(\coker(f),\mathcal{G}_{\coker(f)})$. The fiber product of strict
monomorphisms is a strict monomorphism. The forgetful functor $\omega:\F(\C)\rightarrow\C$
which takes $(X,\mathcal{F})$ to $X$ is exact and induces a lattice
isomorphism $\Sub(X,\mathcal{F})\simeq\Sub(X)$, whose inverse maps
$x$ to $(x,\mathcal{F}_{x})$. The category $\F(\C)$ is equipped
with rank and degree functions, 
\[
\rank(X,\mathcal{F})\eqd\rank(X)\quad\mbox{and}\quad\deg(X,\mathcal{F})\eqd\deg(\mathcal{F}).
\]
Indeed, the first formula plainly defines an integer valued rank function
on $\F(\C)$, which thus satisfies all the properties of \ref{subsec:propofCwithrank}.
For any exact sequence 
\[
0\rightarrow(x,\mathcal{F}_{x})\rightarrow(X,\mathcal{F})\rightarrow(X/x,\mathcal{F}_{X/x})\rightarrow0
\]
in $\F(\C)$, there is an apartment $S$ of $\Sub(X)$ containing
$\mathcal{F}(\mathbb{R})$ and $C=\{0,x,1_{X}\}$; the corresponding
apartment of $\mathbf{F}(X)$ contains $X(1)$ and $\mathcal{F}$,
thus by \ref{subsec:ScalAndrC}
\begin{eqnarray*}
\deg(X,\mathcal{F}) & = & \left\langle X(1),\mathcal{F}\right\rangle \\
 & = & \left\langle r_{C}\left(X(1)\right),r_{C}(\mathcal{F})\right\rangle \\
 & = & \left\langle x(1),\mathcal{F}_{x}\right\rangle +\left\langle X/x(1),\mathcal{F}_{X/x}\right\rangle \\
 & = & \deg(x,\mathcal{F}_{x})+\deg(X/x,\mathcal{F}_{X/x}).
\end{eqnarray*}
For a morphism $f:(X,\mathcal{F})\rightarrow(Y,\mathcal{G})$ with
trivial kernel and cokernel, the induced map $f:\Sub(X)\rightarrow\Sub(Y)$
is a rank preserving lattice isomorphism, thus 
\begin{eqnarray*}
\deg(X,\mathcal{F}) & = & \gamma_{1}\cdot\rank(X)+{\textstyle \sum_{i=2}^{s}}(\gamma_{i}-\gamma_{i-1})\cdot\rank\left(\mathcal{F}(\gamma_{i})\right)\\
 & = & \gamma_{1}\cdot\rank(Y)+{\textstyle \sum_{i=2}^{s}}(\gamma_{i}-\gamma_{i-1})\cdot\rank\left(f(\mathcal{F}(\gamma_{i}))\right)\\
 & \leq & \gamma_{1}\cdot\rank(Y)+{\textstyle \sum_{i=2}^{s}}(\gamma_{i}-\gamma_{i-1})\cdot\rank\left(\mathcal{G}(\gamma_{i})\right)\\
 & = & \deg(Y,\mathcal{G}).
\end{eqnarray*}
where $\left\{ \gamma_{1}<\cdots<\gamma_{s}\right\} =\Jump(\mathcal{F})\cup\Jump(\mathcal{G})$.
This shows that $\deg:\sk\,\F(\C)\rightarrow\mathbb{R}$ is indeed
a degree function on $\F(\C)$. Note also that with notations as above,
we have $\deg(X,\mathcal{F})=\deg(X,\mathcal{G})$ if and only if
$\mathcal{G}(\gamma)=\im(\mathcal{F}(\gamma)\rightarrow Y)$ for every
$\gamma\in\mathbb{R}$. 

\subsubsection{~}

A degree function $\deg:\sk\,\C\rightarrow\mathbb{R}$ on $\C$ gives
rise to a degree function on $\Sub(X)$ for every $X\in\C$, which
yields an \emph{Harder-Narasimhan} $\mathbb{R}$-filtration $\mathcal{F}_{HN}(X)\in\mathbf{F}(X)$
on $X$: the unique $\mathbb{R}$-filtration $\mathcal{F}$ on $X$
(by strict subobjects) such that $\Gr_{\mathcal{F}}^{\gamma}$ is
semi-stable of slope $\gamma$ for every $\gamma\in\mathbb{R}$. Here
semi-stability may either refer to the lattice notion of semi-stable
intervals in $\Sub(X)$, as defined earlier, or to the corresponding
categorical notion: an object $Y$ of $\C$ is semi-stable of slope
$\mu\in\mathbb{R}$ if and only if $\deg(Y)=\mu\,\rank(Y)$ and $\deg(y)\leq\mu\,\rank(y)$
for every strict subobject $y$ of $Y$. This is equivalent to: $\deg(Y)=\mu\,\rank(Y)$
and $\deg(Y/y)\geq\mu\,\rank(y)$ for every strict subobject $y$
of $Y$. Note that $Y=0$ is semi-stable of slope $\mu$ for every
$\mu\in\mathbb{R}$. In general, the slope of a nonzero object $X$
of $\C$ is given by 
\[
\mu(X)=\frac{\deg(X)}{\rank(X)}\in\mathbb{R}.
\]
For any $x\in\Sub(X)$ with $x\neq0$ and $X/x\neq0$, 
\[
\mu(X)=\frac{\rank(x)}{\rank(X)}\mu(x)+\frac{\rank(X/x)}{\rank(X)}\mu(X/x)
\]
thus either one of the following cases occur:
\[
\begin{array}{rl}
 & \mu(x)<\mu(X)<\mu(X/x),\\
\mbox{or} & \mu(x)>\mu(X)>\mu(X/x),\\
\mbox{or} & \mu(x)=\mu(X)=\mu(X/x).
\end{array}
\]

\subsubsection{~}

We claim that the Harder-Narasimhan filtration $X\mapsto\mathcal{F}_{HN}(X)$
is functorial. This easily follows from the next classical lemma,
a categorical variant of lemma~\ref{lem:LozStab}.
\begin{lem}
\label{lem:Stab}Suppose that $A$ and $B$ are semi-stable of slope
$a>b$. Then 
\[
\Hom_{\C}(A,B)=0.
\]
\end{lem}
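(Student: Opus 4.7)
The plan is to suppose for contradiction that a nonzero morphism $f \colon A \to B$ exists and exhibit the standard ``slope sandwich'' contradiction via the canonical factorization $A \twoheadrightarrow \coim(f) \xrightarrow{\overline{f}} \im(f) \hookrightarrow B$.

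First I would observe that $\ker(f)$ is a strict subobject of $A$ distinct from $A$ (since $f\neq0$), so $\coim(f)=A/\ker(f)$ is a nonzero strict quotient of $A$; likewise $\im(f)$ is a nonzero strict subobject of $B$. By the characterization of semi-stability recalled just before the lemma, semi-stability of $A$ at slope $a$ together with the additivity of $\deg$ and $\rank$ on short exact sequences gives $\mu(\coim(f))\geq a$, and semi-stability of $B$ at slope $b$ gives $\mu(\im(f))\leq b$.

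Next I would compare $\coim(f)$ and $\im(f)$ using the two axioms of a degree (resp. rank) function on $\C$ from \ref{sub:defrkdegonC} and the property proved in \ref{sub:propofCwithrank} that the induced morphism $\overline{f}\colon\coim(f)\to\im(f)$ has trivial kernel and cokernel. The degree axiom yields $\deg(\coim(f))\leq\deg(\im(f))$, while exactness of the rank function together with the vanishing of $\ker(\overline{f})$ and $\coker(\overline{f})$ yields $\rank(\coim(f))=\rank(\im(f))$. Both sides being nonzero, this gives $\mu(\coim(f))\leq\mu(\im(f))$.

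Combining the three inequalities forces $a\leq\mu(\coim(f))\leq\mu(\im(f))\leq b$, contradicting $a>b$. Hence no nonzero morphism $A\to B$ exists. I do not expect any real obstacle: every ingredient (strictness of $\ker(f)$ as a subobject, the rank-preserving property of $\overline{f}$, and the reformulation of semi-stability in terms of subobjects versus quotients) has already been recorded in \ref{sub:joinmeet}--\ref{sub:propofCwithrank} and in the paragraph introducing $\mu$, so the argument is essentially a bookkeeping exercise tying these facts together.
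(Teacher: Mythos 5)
Your proposal is correct and follows exactly the paper's own argument: the chain $a\leq\mu(\coim(f))\leq\mu(\im(f))\leq b$, with the outer inequalities coming from semi-stability of $A$ and $B$ and the middle one from the degree axiom of \ref{sub:defrkdegonC} applied to $\overline{f}$. The extra bookkeeping you supply (strictness of $\ker(f)$, rank-preservation across $\overline{f}$) is exactly what the paper implicitly relies on, so there is nothing to add.
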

\begin{proof}
Suppose $f:A\rightarrow B$ is nonzero, i.e.~$\coim(f)\neq0$ and
$\im(f)\neq0$. Then 
\[
a\stackrel{(1)}{\leq}\mu(\coim(f))\stackrel{(2)}{\leq}\mu(\im(f))\stackrel{(3)}{\leq}b
\]
since $(1)$ $A$ is semi-stable of slope $a$, $(3)$ $B$ is semi-stable
of slope $b$, and $(2)$ follows from the definition of $\mu$. This
is a contradiction, thus $f=0$.
\end{proof}

\subsubsection{~}

We thus obtain a \emph{Harder-Narasimhan functor} 
\[
\mathcal{F}_{HN}:\C\rightarrow\F(\C)
\]
which is a section of the forgetful functor $\omega:\F(\C)\rightarrow\C$.
The original degree function on $\C$ may be retrieved from the associated
functor $\mathcal{F}_{HN}$ by composing it with the standard degree
function on $\F(\C)$ which takes $(X,\mathcal{F})$ to $\deg(\mathcal{F})$.
The above construction thus yields an injective map from the set of
all degree functions on $\C$ to the set of all sections $\C\rightarrow\F(\C)$
of $\omega:\F(\C)\rightarrow\C$. A functor in the image of this map
is what André calls a \emph{slope filtration on} $\C$ \cite[\S 4]{An09}. 
\begin{rem}
For the rank and degree functions on $\C'=\F(\C)$ defined in section~\ref{subsec:CatF(C)},
the Harder-Narasimhan filtration is tautological: $\mathcal{F}_{HN}(X,\mathcal{F})=\mathcal{F}$
in 
\[
\mathbf{F}(X)=\mathbf{F}(\Sub(X))=\mathbf{F}(\Sub(X,\mathcal{F}))=\mathbf{F}(X,\mathcal{F}).
\]
\end{rem}

\subsubsection{~}

As mentioned in the introduction, our Harder-Narasimhan formalism
for categories is closely related to André's formalism in \cite{An09},
which indeed was our main source of inspiration. The formalism used
by Fargues in~\cite{Fa10} is a specialization of André's, with a
set-up closer to what we will have in the next section. Other formalisms
have been proposed, dealing with categories equipped with auxilliary
structures: triangulations in~\cite{Br07}, exact sequences and geometric
structures in~\cite{Ch10}.

\section{The Harder-Narasimhan formalism on quasi-Tannakian categories\label{sec:HN4QT}}

\subsection{Tannakian categories\label{subsec:HypOnA}}

Let $k$ be a field and let $\A$ be a $k$-linear tannakian category
\cite{De90} with unit $1_{\A}$ and ground field $k_{A}=\End_{\A}(1_{A})$,
an extension of $k$. Let also $G$ be a reductive group over $k$.
We denote by $\Rep(G)$ the $k$-linear tannakian category of algebraic
representations of $G$ on finite dimensional $k$-vector spaces.
Finally, let $\omega_{G,\A}:\Rep(G)\rightarrow\A$ be a fixed exact
and faithful $k$-linear $\otimes$-functor. 

\subsubsection{~}

The category $\A$ is equipped with a natural integer-valued rank
function
\[
\rank_{\A}:\sk\,\A\rightarrow\mathbb{N}.
\]
Indeed, recall that a fiber functor on $\A$ is an exact faithful
$k_{\A}$-linear $\otimes$-functor 
\[
\omega_{\A,\ell}:\A\rightarrow\Vect_{\ell}
\]
for some extension $\ell$ of $k_{\A}$. The existence of such fiber
functors is part of the definition of tannakian categories, and any
two such functors $\omega_{\A,\ell_{1}}$ and $\omega_{\A,\ell_{2}}$
become isomorphic over some common extension $\ell_{3}$ of $\ell_{1}$
and $\ell_{2}$~\cite[\S 1.10]{De90}: we may thus set
\[
\forall X\in\sk\,\A:\qquad\rank_{\A}(X)\eqd\dim_{\ell}\left(\omega_{\A,\ell}(X)\right).
\]
This equips $\Sub(X)$ with a natural rank function and $\mathbf{F}(X)=\mathbf{F}(\Sub(X))$
with a natural norm, CAT(0)-distance and scalar product \textendash{}
for every object $X$ of $\A$. 

\subsubsection{~}

The category $\F(\A)$ is a quasi-abelian $k_{\A}$-linear rigid $\otimes$-category,
with 
\[
(X_{1},\mathcal{F}_{1})\otimes(X_{2},\mathcal{F}_{2})\eqd(X_{1}\otimes X_{2},\mathcal{F}_{1}\otimes\mathcal{F}_{2})\quad\mbox{and}\quad(X,\mathcal{F})^{\ast}\eqd(X^{\ast},\mathcal{F}^{\ast})
\]
where $\mathcal{F}_{1}\otimes\mathcal{F}_{2}\in\mathbf{F}(X_{1}\otimes X_{2})$
and $\mathcal{F}^{\ast}\in\mathbf{F}(X^{\ast})$ are respectively
given by 
\[
(\mathcal{F}_{1}\otimes\mathcal{F}_{2})(\gamma)\eqd{\textstyle \sum_{\gamma_{1}+\gamma_{2}=\gamma}}\mathcal{F}_{1}(\gamma_{1})\otimes\mathcal{F}_{2}(\gamma_{2})\quad\mbox{and}\quad\mathcal{F}^{\ast}(\gamma)\eqd\left(X/\mathcal{F}_{+}(\gamma)\right)^{\ast}.
\]
Note that the formula defining $\mathcal{F}_{1}\otimes\mathcal{F}_{2}$
indeed makes sense, since $\mathcal{F}_{1}(\mathbb{R})$ and $\mathcal{F}_{2}(\mathbb{R})$
are finite subsets of $\Sub(X_{1})$ and $\Sub(X_{2})$, and the $\otimes$-product
is exact. For the standard degree function $\deg_{\A}:\sk\,\F(\A)\rightarrow\mathbb{R}$
of section~\ref{subsec:CatF(C)}, 
\[
\deg_{\A}\left(\mathcal{F}_{1}\otimes\mathcal{F}_{2}\right)=\rank_{\A}(X_{1})\cdot\deg_{\A}(\mathcal{F}_{2})+\rank_{\A}(X_{2})\cdot\deg_{\A}(\mathcal{F}_{1})
\]
and $\deg_{\A}(\mathcal{F}^{\ast})=-\deg_{\A}(\mathcal{F})$. This
can be checked after applying some fiber functor $\omega_{\A,\ell}:\A\rightarrow\Vect_{\ell}$
as above: the formulas are easily established in $\Vect_{\ell}$. 

\subsubsection{~}

We denote by $\mathbf{F}(\omega_{G,\A})$ the set of all factorizations
\[
\omega_{G,\A}:\Rep(G)\stackrel{\mathcal{F}}{\longrightarrow}\F(\A)\stackrel{\omega}{\longrightarrow}\A
\]
of our given exact $\otimes$-functor $\omega_{G,\A}$ through a $k$-linear
exact $\otimes$-functor 
\[
\mathcal{F}:\Rep(G)\rightarrow\F(\A).
\]
Thus for every $\tau\in\Rep(G)$, we have an evaluation map 
\[
\mathbf{F}(\omega_{G,\A})\rightarrow\mathbf{F}(\omega_{G,\A}(\tau)),\qquad\mathcal{F}\mapsto\mathcal{F}(\tau).
\]
For instance, the trivial filtration $0\in\mathbf{F}(\omega_{G,\A})$
maps $\tau\in\Rep(G)$ to the $\mathbb{R}$-filtration on $\omega_{G,\A}(\tau)$
with a single jump at $\gamma=0$, i.e.~$0(\tau)=\omega_{G,\A}(\tau)(0)$. 
\begin{thm}
The set $\mathbf{F}(\omega_{G,\A})$ is equipped with a scalar multiplication
and a symmetric addition map given by the following formulas: for
every $\tau\in\Rep(G)$, 
\[
(\lambda\cdot\mathcal{F})(\tau)\eqd\lambda\cdot\mathcal{F}(\tau)\quad\mbox{and}\quad(\mathcal{F}+\mathcal{G})(\tau)\eqd\mathcal{F}(\tau)+\mathcal{G}(\tau).
\]
The choice of a faithful representation $\tau$ of $G$ equips $\mathbf{F}(\omega_{G,\A})$
with a norm, a distance, and a scalar product given by the following
formulas: for $\mathcal{F},\mathcal{G}$ in $\mathbf{F}(\omega_{G,\A})$,
\[
\left\Vert \mathcal{F}\right\Vert _{\tau}\eqd\left\Vert \mathcal{F}(\tau)\right\Vert ,\quad d_{\tau}(\mathcal{F},\mathcal{G})\eqd d(\mathcal{F}(\tau),\mathcal{G}(\tau))\quad\mbox{and}\quad\left\langle \mathcal{F},\mathcal{G}\right\rangle _{\tau}\eqd\left\langle \mathcal{F}(\tau),\mathcal{G}(\tau)\right\rangle .
\]
The resulting metric space $(\mathbf{F}(\omega_{G,\A}),d_{\tau})$
is CAT(0) and complete. The underlying metrizable topology on $\mathbf{F}(\omega_{G,\A})$
does not depend upon the chosen $\tau$.
\end{thm}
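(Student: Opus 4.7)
The plan is to transport all the assertions to the complete CAT(0) space $\mathbf{F}(\omega_{G,\A}(\tau))$ via the evaluation map $\mathcal{F} \mapsto \mathcal{F}(\tau)$, after showing this map is a convex injection.

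First I would verify that the pointwise recipes $\lambda \cdot \mathcal{F}$ and $\mathcal{F} + \mathcal{G}$ again yield $k$-linear exact $\otimes$-functors into $\F(\A)$. Scalar multiplication is immediate, since $\lambda \cdot (-)$ on any $\mathbf{F}(X)$ visibly commutes with the tensor and dual formulas for filtrations. For addition one needs the identities
\[
(\mathcal{F}_1 + \mathcal{G}_1) \otimes (\mathcal{F}_2 + \mathcal{G}_2) = (\mathcal{F}_1 \otimes \mathcal{F}_2) + (\mathcal{G}_1 \otimes \mathcal{G}_2), \qquad (\mathcal{F} + \mathcal{G})^* = \mathcal{F}^* + \mathcal{G}^*,
\]
together with preservation of short exact sequences in $\F(\A)$. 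These I would check by pushing everything through a fiber functor $\omega_{\A,\ell}: \A \to \Vect_\ell$, reducing the question to filtered finite-dimensional vector spaces carrying $G$-actions; after choosing a simultaneous weight decomposition (a common apartment), addition becomes linear on an $\mathbb{R}$-vector space and the identities are elementary.

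Next I would exploit the faithfulness of $\tau$. By a classical Tannakian subquotient theorem, every $\sigma \in \Rep(G)$ occurs as a strict subquotient of some $\tau^{\otimes m} \otimes (\tau^*)^{\otimes n}$, so an exact $\otimes$-functor $\mathcal{F}$ is determined by its value $\mathcal{F}(\tau)$; the evaluation map is therefore injective, and the pullbacks $\left\Vert \cdot \right\Vert_\tau$, $d_\tau$, $\langle \cdot,\cdot\rangle_\tau$ are non-degenerate. Since the operations on $\mathbf{F}(\omega_{G,\A})$ are pointwise at $\tau$, the image contains every geodesic segment $t \mapsto (1-t)\mathcal{F} + t\mathcal{G}$, so it is convex in the ambient CAT(0) space $\mathbf{F}(\omega_{G,\A}(\tau))$; the CAT(0) inequality transfers to $d_\tau$ on $\mathbf{F}(\omega_{G,\A})$ with no further work. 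Both completeness and topological invariance then hinge on the bound $d_\sigma(\mathcal{F}, \mathcal{G}) \leq C_\sigma\, d_\tau(\mathcal{F}, \mathcal{G})$ for every $\sigma \in \Rep(G)$ and some $C_\sigma > 0$. Granting this, a $d_\tau$-Cauchy sequence $(\mathcal{F}_n)$ yields Cauchy sequences $(\mathcal{F}_n(\sigma))$ in each complete space $\mathbf{F}(\omega_{G,\A}(\sigma))$, and the pointwise limits $\mathcal{F}(\sigma) := \lim_n \mathcal{F}_n(\sigma)$ assemble into an exact $\otimes$-functor by passing the identities of the first step through the limit. Applied symmetrically to a second faithful $\tau'$, the bound gives equivalence of $d_\tau$ and $d_{\tau'}$. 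The inequality $d_\sigma \leq C_\sigma\, d_\tau$ itself comes from writing $\sigma$ as a strict subquotient of $T = \tau^{\otimes m} \otimes (\tau^*)^{\otimes n}$: the tensor formula controls $d_T$ in terms of $d_\tau$ up to a rank-dependent constant, and the subquotient operation $\mathbf{F}(T) \to \mathbf{F}(\sigma)$ is Lipschitz on $\otimes$-functorial filtrations.

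The main obstacle I expect is precisely this Lipschitz control: the tensor product and subquotient operations on $\mathbf{F}$ are defined via meets and joins in the (non-distributive) lattices $\Sub(-)$, so joint continuity is not automatic. The natural resolution is again reduction through a fiber functor $\omega_{\A,\ell}$, after which each $\mathcal{F}$ becomes an $\mathbb{R}$-grading of a fixed finite-dimensional vector space; the tensor and subquotient operations then become linear maps between finite-dimensional spaces of weight data, for which the required uniform Lipschitz bounds are elementary.
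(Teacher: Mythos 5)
Your proposal is correct in outline and matches the paper's strategy at its two key points: both arguments reduce to the standard fiber functor case by post-composing with some $\omega_{\A,\ell}$, and both realize $\mathbf{F}(\omega_{G,\A})$ as a convex subset of the complete CAT(0) space $\mathbf{F}(\omega_{G,\A}(\tau))$ via evaluation at a faithful $\tau$ (your injectivity argument through strict subquotients of $\tau^{\otimes m}\otimes(\tau^{*})^{\otimes n}$ is exactly the content of the reference \cite{Co14} that the paper invokes for this step). The genuine divergence is in the completeness proof. The paper embeds $\mathbf{F}(\omega_{G,\A})$ into the vectorial building $\mathbf{F}(\omega_{G,\ell})$, takes the completeness of the latter as known from \cite{Co14}, and then only has to check that $\mathbf{F}(\omega_{G,\A})$ is closed, which it does pointwise; the limit is automatically an exact $\otimes$-functor because it already lives in $\mathbf{F}(\omega_{G,\ell})$. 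You instead rebuild the limit representation by representation and must then reassemble the pointwise limits into an exact $\otimes$-functor, which forces you to prove that the tensor, dual and induced-subquotient operations on filtrations are Lipschitz and that the morphism condition $f(\mathcal{F}(\gamma))\subseteq\mathcal{G}(\gamma)$ is closed under limits --- in effect you re-prove the completeness of the building rather than quoting it. Those continuity claims are all true, but your justification of them is slightly too quick: after applying a fiber functor the operations do not become linear maps on global ``weight data'' (a filtration has no canonical grading, and a splitting cannot be chosen continuously), only linear apartment by apartment; for $\otimes$ one uses a common adapted basis in each factor and the tensor-product apartment downstairs, and for subquotients the right statement is the non-expanding property of $r_{C}$ established in \ref{sub:ScalAndrC}. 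With those lemmas supplied (most are in \cite{Co14} or in section~\ref{sec:HN4ModLatt}) your argument goes through; the paper's route is shorter because the analytic work is delegated to the building.
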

\begin{proof}
If $\A=\Vect_{k_{\A}}$ and $\omega_{G,\A}$ is the standard fiber
functor $\omega_{G,k_{\A}}$ which maps a representation $\tau$ of
$G$ on the $k$-vector space $V(\tau)$ to the $k_{\A}$-vector space
$V(\tau)\otimes k_{\A}$, then $\mathbf{F}(\omega_{G,k_{\A}})$ is
the vectorial Tits building of $G_{k_{\A}}$ studied in \cite[Chapter 4]{Co14}
where everything can be found. For the general case, pick an extension
$\ell$ of $k_{\A}$ and a fiber functor $\omega_{\A,\ell}:\A\rightarrow\Vect_{\ell}$
such that $\omega_{\A,\ell}\circ\omega_{G,\A}$ is $\otimes$-isomorphic
to the standard fiber functor $\omega_{G,\ell}$. Then, for every
$\tau\in\Rep(G)$, we obtain a commutative diagram
\[
\xyC{2pc}\xyR{2pc}\xymatrix{\mathbf{F}\left(\omega_{G,\A}\right)\ar@{^{(}->}[r]\ar[d] & \mathbf{F}\left(\omega_{G,\ell}\right)\ar[d]\\
\mathbf{F}\left(\omega_{G,\A}(\tau)\right)\ar@{^{(}->}[r] & \mathbf{F}\left(\omega_{G,\ell}(\tau)\right)
}
\]
The horizontal maps are injective since $\omega_{\A,\ell}$ is exact
and faithful. The second vertical map is continuous, and so is therefore
also the first one (for the induced topologies). Moreover, both vertical
maps are injective if $\tau$ is a faithful representation of $G$
by \cite[Corollary 87]{Co14}. For the first claims, we have to show
that the functors $\Rep(G)\rightarrow\F(\A)$ defined by the formulas
for $\lambda\cdot\mathcal{F}$ and $\mathcal{F}+\mathcal{G}$ are
exact and compatible with tensor products: this can be checked after
post-composition with the fiber functor $\omega_{\A,\ell}$, see \cite[Section 3.11.10]{Co14}.
It follows that for any faithful $\tau$, $\mathbf{F}(\omega_{G,\A})$
is a convex subset of $\mathbf{F}(\omega_{G,\A}(\tau))$ and $\mathbf{F}(\omega_{G,\ell})$,
the function $d_{\tau}$ is a CAT(0)-distance on $\mathbf{F}(\omega_{G,\A})$
and the resulting topology does not depend upon the chosen $\tau$~\cite[Section 4.2.11]{Co14}.
It remains to establish that $\left(\mathbf{F}(\omega_{G,\A}),d_{\tau}\right)$
is complete, and this amounts to showing that $\mathbf{F}(\omega_{G,\A})$
is closed in $\mathbf{F}(\omega_{G,\ell})$. But if $\mathcal{F}_{n}\in\mathbf{F}(\omega_{G,\A})$
converges to $\mathcal{F}\in\mathbf{F}(\omega_{G,\ell})$, then for
every $\tau\in\Rep(G)$, $\mathcal{F}_{n}(\tau)\in\mathbf{F}(\omega_{G,\A}(\tau))$
converges to $\mathcal{F}(\tau)\in\mathbf{F}(\omega_{G,\ell}(\tau))$,
thus actually $\mathcal{F}(\tau)\in\mathbf{F}(\omega_{G,\A}(\tau))$
since $\mathbf{F}(\omega_{G,\A}(\tau))$ is (complete thus) closed
in $\mathbf{F}(\omega_{G,\ell}(\tau))$, therefore indeed $\mathcal{F}\in\mathbf{F}(\omega_{G,\A})$. 
\end{proof}

\subsubsection{~\label{subsec:ProjAttau}}

For a faithful representation $\tau$ of $G$, we have just seen that
evaluation at $\tau$ identifies $\mathbf{F}(\omega_{G,\A})$ with
a closed convex subset $\mathbf{F}(\omega_{G,\A})(\tau)$ of $\mathbf{F}(\omega_{G,\A}(\tau))$.
Let 
\[
p:\mathbf{F}(\omega_{G,\A}(\tau))\twoheadrightarrow\mathbf{F}(\omega_{G,\A})(\tau)
\]
be the corresponding convex projection with respect to the natural
distance $d$ on $\mathbf{F}(\omega_{G,\A}(\tau))$. For every $\mathcal{F}\in\mathbf{F}(\omega_{G,\A})$
and $f,g\in\mathbf{F}(\omega_{G,\A}(\tau))$, we have 
\[
d\left(p(f),p(g)\right)\leq d\left(f,g\right),\quad\left\Vert p(f)\right\Vert \leq\left\Vert f\right\Vert \quad\mbox{and}\quad\left\langle \mathcal{F}(\tau),f\right\rangle \leq\left\langle \mathcal{F}(\tau),p(f)\right\rangle .
\]
The first formula comes from~\cite[II.2.4]{BrHa99}. The second follows,
with $g=p(g)=0(\tau)$. The third formula can be proved as in section~\ref{subsec:ExampleProj},
see also \cite[Section 5.7.7]{Co14}.

\subsection{Quasi-Tannakian categories\label{subsec:HypOnC}}

Let now $\C$ be an essentially small $k$-linear quasi-abelian $\otimes$-category
with a faithful exact $k$-linear $\otimes$-functor $\omega_{\C,\A}:\C\rightarrow\A$
such that for every object $X$ of $\C$, $\omega_{\C,\A}$ induces
a bijection between strict subobjects of $X$ in $\C$ and (strict)
subobjects of $\omega_{\C,\A}(X)$ in $\A$. We add to this data a
degree function $\deg_{\C}:\sk\,\C\rightarrow\mathbb{R}$, i.e. a
function which is additive on short exact sequences and non-decreasing
on mono-epis. Together with the rank function 
\[
\rank_{\C}(X)\eqd\rank_{\A}(\omega_{\C,\A}(X)),
\]
it yields a Harder-Narasimhan filtration on $\C$, which we view as
a functor over $\A$, 
\[
\mathcal{F}_{HN}:\C\rightarrow\F(\A),\qquad\omega\circ\mathcal{F}_{HN}=\omega_{\C,\A}.
\]
Note that this functor $\mathcal{F}_{HN}$ is usually neither exact,
nor a $\otimes$-functor.

\subsubsection{\label{subsec:CarHNQuAb}~}

We denote by $\C(X)$ the fiber of $\omega_{\C,\A}:\C\rightarrow\A$
over an object $X$ of $\A$, and for $x\in\C(X)$, we denote by $\left\langle x,-\right\rangle :\mathbf{F}(X)\rightarrow\mathbb{R}$
the concave degree function on 
\[
\mathbf{F}(X)=\mathbf{F}(\Sub(X))=\mathbf{F}(\Sub(x))=\mathbf{F}(x)
\]
induced by our given degree function on $\C$, thereby obtaining a
pairing
\[
\left\langle -,-\right\rangle :\C(X)\times\mathbf{F}(X)\rightarrow\mathbb{R}.
\]
By proposition~\ref{prop:CarHN}, the Harder-Narasimhan filtration
$\mathcal{F}_{HN}(x)$ of $x$ is the unique element $\mathcal{F}\in\mathbf{F}(X)$
with the following equivalent properties: 
\begin{enumerate}
\item For every $f\in\mathbf{F}(X)$, $\left\Vert \mathcal{F}\right\Vert ^{2}-2\left\langle x,\mathcal{F}\right\rangle \leq\left\Vert f\right\Vert ^{2}-2\left\langle x,f\right\rangle $.
\item For every $f\in\mathbf{F}(X)$, $\left\langle x,f\right\rangle \leq\left\langle \mathcal{F},f\right\rangle $
with equality for $f=\mathcal{F}$. 
\item For every $\gamma\in\mathbb{R}$, $\Gr_{\mathcal{F}}^{\gamma}(x)$
is semi-stable of slope $\gamma$.
\end{enumerate}
In $(3)$, $\Gr_{\mathcal{F}}^{\gamma}(x)=\mathcal{F}^{\gamma}(x)/\mathcal{F}_{+}^{\gamma}(x)$
where $\mathcal{F}^{\gamma}(x)$ and $\mathcal{F}_{+}^{\gamma}(x)$
are the strict subobjects of $x$ corresponding to the (strict) subobjects
$\mathcal{F}(\gamma)$ and $\mathcal{F}_{+}(\gamma)$ of $X=\omega_{\C,\A}(x)$. 

\subsubsection{~\label{subsec:CompWithTensor}}

We denote by $\C^{\otimes}(\omega_{G,\A})$ the set of all factorizations
\[
\omega_{G,\A}:\Rep(G)\stackrel{x}{\longrightarrow}\C\stackrel{\omega_{\C,\A}}{\longrightarrow}\A
\]
of our given exact $\otimes$-functor $\omega_{G,\A}$ through a $k$-linear
exact $\otimes$-functor 
\[
x:\Rep(G)\rightarrow\C.
\]
Thus for every $\tau\in\Rep(G)$, we have an evaluation map 
\[
\C^{\otimes}(\omega_{G,\A})\rightarrow\mathbf{\C}(\omega_{G,\A}(\tau)),\qquad x\mapsto x(\tau)
\]
and the corresponding pairing 
\[
\left\langle -,-\right\rangle _{\tau}:\C^{\otimes}(\omega_{G,\A})\times\mathbf{F}(\omega_{G,\A})\rightarrow\mathbb{R},\qquad\left\langle x,\mathcal{F}\right\rangle _{\tau}=\left\langle x(\tau),\mathcal{F}(\tau)\right\rangle .
\]
Note that the latter is concave in the second variable.
\begin{prop}
For $x\in\C^{\otimes}(\omega_{G,\A})$ and any faithful representation
$\tau$ of $G$, there is a unique $\mathcal{F}$ in $\mathbf{F}(\omega_{G,\A})$
which satisfies the following equivalent conditions:

\begin{enumerate}
\item For every $f\in\mathbf{F}(\omega_{G,\A})$, $\left\Vert \mathcal{F}\right\Vert _{\tau}^{2}-2\left\langle x,\mathcal{F}\right\rangle _{\tau}\leq\left\Vert f\right\Vert _{\tau}^{2}-2\left\langle x,f\right\rangle _{\tau}$.
\item For every $f\in\mathbf{F}(\omega_{G,\A})$, $\left\langle x,f\right\rangle _{\tau}\leq\left\langle \mathcal{F},f\right\rangle _{\tau}$
with equality for $f=\mathcal{F}$.
\end{enumerate}
Suppose moreover that for every $f\in\mathbf{F}(\omega_{G,\A}(\tau))$
with projection $p(f)\in\mathbf{F}(\omega_{G,\A})(\tau)$,
\[
\left\langle x(\tau),f\right\rangle \leq\left\langle x(\tau),p(f)\right\rangle .
\]
Then $\mathcal{F}(\tau)=\mathcal{F}_{HN}(x(\tau))$.
\end{prop}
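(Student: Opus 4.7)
The strategy is to replay the proof of Proposition~\ref{prop:CarHN} inside the complete CAT(0) space $(\mathbf{F}(\omega_{G,\A}), d_\tau)$ supplied by the preceding theorem, and then to deduce the moreover claim from the projection $p$ introduced in section~\ref{sub:ProjAttau}.

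First I would observe that the three ingredients used in the proof of Proposition~\ref{prop:CarHN}, namely concavity, domination by the norm (some linear bound $\langle x,f\rangle_\tau \leq A\|f\|_\tau$) and upper semi-continuity, all hold for the functional $\langle x,-\rangle_\tau : \mathbf{F}(\omega_{G,\A}) \to \mathbb{R}$. These three properties are inherited from the corresponding properties of $\langle x(\tau),-\rangle$ on $\mathbf{F}(\omega_{G,\A}(\tau))$, which are established in section~\ref{sub:CarHNQuAb} and Proposition~\ref{prop:Contdeg}, via the isometric embedding $\mathcal{F} \mapsto \mathcal{F}(\tau)$, which intertwines scalar multiplication, symmetric addition, norms and pairings by the preceding theorem.

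With these prerequisites in hand, existence of a minimizer $\mathcal{F} \in \mathbf{F}(\omega_{G,\A})$ of $f \mapsto \|f\|_\tau^2 - 2\langle x, f\rangle_\tau$ follows by the argument of Proposition~\ref{prop:CarHN}: a minimizing sequence is Cauchy for $d_\tau$ by the CAT(0) inequality applied to midpoints combined with concavity, converges by completeness to some $\mathcal{F}$, and $\mathcal{F}$ realizes the infimum by upper semi-continuity. The equivalence $(1) \Leftrightarrow (2)$ and the uniqueness of $\mathcal{F}$ then go through verbatim; the only identity beyond what the isometric embedding supplies for free is
\[
\|\mathcal{F}+tf\|_\tau^2 = \|\mathcal{F}\|_\tau^2 + t^2\|f\|_\tau^2 + 2t\langle \mathcal{F},f\rangle_\tau,
\]
which is additivity of the ambient pairing on any apartment of $\mathbf{F}(\omega_{G,\A}(\tau))$ containing $\mathcal{F}(\tau)$ and $f(\tau)$.

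For the moreover claim, set $\mathcal{G} = \mathcal{F}_{HN}(x(\tau)) \in \mathbf{F}(\omega_{G,\A}(\tau))$, the unique minimizer over the ambient space by Proposition~\ref{prop:CarHN}. Combining the hypothesis $\langle x(\tau), g\rangle \leq \langle x(\tau), p(g)\rangle$ with the contraction $\|p(g)\| \leq \|g\|$ recorded in section~\ref{sub:ProjAttau} gives
\[
\|p(g)\|^2 - 2\langle x(\tau), p(g)\rangle \leq \|g\|^2 - 2\langle x(\tau), g\rangle
\]
for every $g \in \mathbf{F}(\omega_{G,\A}(\tau))$. Applied to $g = \mathcal{G}$ and invoking uniqueness of the ambient minimizer, this forces $p(\mathcal{G}) = \mathcal{G}$, so $\mathcal{G}$ lies in the closed convex subset $\mathbf{F}(\omega_{G,\A})(\tau)$; it is then also the unique minimizer of the functional restricted to this subset, hence equal to $\mathcal{F}(\tau)$ by condition~(1). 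The bulk of the work is the bookkeeping in the first paragraph, after which the moreover statement reduces to a clean two-line use of the convex projection; I do not expect any serious new difficulty beyond checking that the evaluation-at-$\tau$ embedding really transports every feature needed to rerun the proof of Proposition~\ref{prop:CarHN}.
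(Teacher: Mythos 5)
Your overall strategy — transport the completeness, concavity, domination-by-norm and upper semi-continuity arguments of Proposition~\ref{prop:CarHN} to $(\mathbf{F}(\omega_{G,\A}),d_\tau)$ via the isometric evaluation-at-$\tau$ embedding, then deduce the moreover claim from the non-expanding projection $p$ — is exactly the paper's. The moreover paragraph is correct and is essentially the paper's argument: both rely on the inequality
\[
\left\Vert p(g)\right\Vert ^{2}-2\left\langle x(\tau),p(g)\right\rangle \leq\left\Vert g\right\Vert ^{2}-2\left\langle x(\tau),g\right\rangle ,
\]
the paper reading it with $g=f$ and concluding that $\mathcal{F}(\tau)$ minimizes over the ambient space, you reading it with $g=\mathcal{F}_{HN}(x(\tau))$ and concluding that the ambient minimizer already lies in $\mathbf{F}(\omega_{G,\A})(\tau)$; these are the same argument.

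There is, however, one genuine gap: the claim that ``the equivalence $(1)\Leftrightarrow(2)$ and the uniqueness of $\mathcal{F}$ then go through verbatim.'' In Proposition~\ref{prop:CarHN}, both $(2)\Rightarrow(1)$ and uniqueness are obtained by routing through condition $(3)$ (semi-stability of the graded pieces $\Gr_{\mathcal{F}}^{\gamma}$), and there is no analogue of $(3)$ on $\mathbf{F}(\omega_{G,\A})$, which is not the filtration space of a modular lattice. So the CarHN uniqueness argument simply does not apply here, and a replacement is needed. The paper supplies a direct CAT(0) argument \emph{for uniqueness in $(2)$}: if $\mathcal{F}$ and $\mathcal{G}$ both satisfy $(2)$, then $\left\Vert \mathcal{F}\right\Vert _{\tau}^{2}=\left\langle x,\mathcal{F}\right\rangle _{\tau}\leq\left\langle \mathcal{G},\mathcal{F}\right\rangle _{\tau}$ and symmetrically, whence $d_{\tau}(\mathcal{F},\mathcal{G})^{2}=\left\Vert \mathcal{F}\right\Vert _{\tau}^{2}+\left\Vert \mathcal{G}\right\Vert _{\tau}^{2}-2\left\langle \mathcal{F},\mathcal{G}\right\rangle _{\tau}\leq0$. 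You should add such an argument; alternatively, you can prove $(2)\Rightarrow(1)$ directly from $2\left\langle x,f\right\rangle_\tau\leq 2\left\langle\mathcal{F},f\right\rangle_\tau=\left\Vert\mathcal{F}\right\Vert_\tau^2+\left\Vert f\right\Vert_\tau^2-d_\tau(\mathcal{F},f)^2\leq\left\Vert\mathcal{F}\right\Vert_\tau^2+\left\Vert f\right\Vert_\tau^2$ and note that uniqueness in $(1)$ already follows from the Cauchy/strict-convexity argument used to prove existence. Either repair is short, but some repair is required.
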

\begin{proof}
For the first claim, it is sufficient to establish the implication
$(1)\Rightarrow(2)$ for any $\mathcal{F}\in\mathbf{F}(\omega_{G,\A})$,
the existence of an $\mathcal{F}$ satisfying $(1)$, and the uniqueness
of any $\mathcal{F}$ satisfying $(2)$. The first two of these are
proved as in proposition~\ref{prop:CarHN}, replacing everywhere
the complete CAT(0)-space $\mathbf{F}(X)$ by $\mathbf{F}(\omega_{G,\A})$
and the concave function $\left\langle \star,-\right\rangle $ by
$\left\langle x,-\right\rangle _{\tau}$. As for uniqueness, if $\mathcal{F}$
and $\mathcal{G}$ both satisfy $(2)$, then 
\[
\left\Vert \mathcal{F}\right\Vert _{\tau}^{2}=\left\langle x,\mathcal{F}\right\rangle _{\tau}\leq\left\langle \mathcal{G},\mathcal{F}\right\rangle _{\tau}\quad\mbox{and}\quad\left\Vert \mathcal{G}\right\Vert _{\tau}^{2}=\left\langle x,\mathcal{G}\right\rangle _{\tau}\leq\left\langle \mathcal{F},\mathcal{G}\right\rangle _{\tau}
\]
therefore $d_{\tau}(\mathcal{F},\mathcal{G})^{2}=\left\Vert \mathcal{F}\right\Vert _{\tau}^{2}+\left\Vert \mathcal{G}\right\Vert _{\tau}^{2}-2\left\langle \mathcal{F},\mathcal{G}\right\rangle _{\tau}\leq0$
and $\mathcal{F}=\mathcal{G}$. For the last claim, 
\[
\left\Vert \mathcal{F}(\tau)\right\Vert ^{2}-2\left\langle x(\tau),\mathcal{F}(\tau)\right\rangle \leq\left\Vert p(f)\right\Vert ^{2}-2\left\langle x(\tau),p(f)\right\rangle \leq\left\Vert f\right\Vert ^{2}-2\left\langle x(\tau),f\right\rangle 
\]
for every $f\in\mathbf{F}(\omega_{G,\A}(\tau))$ by the first characterization
of $\mathcal{F}$, the assumption on $(x,\tau)$ and the inequality
$\left\Vert p(f)\right\Vert \leq\left\Vert f\right\Vert $. Thus indeed
$\mathcal{F}(\tau)=\mathcal{F}_{HN}(x(\tau))$ by \ref{subsec:CarHNQuAb}.
\end{proof}
\begin{prop}
\label{prop:CaractGood}Fix $x\in\C^{\otimes}(\omega_{G,\A})$. Suppose
that for every faithful representation $\tau$ of $G$ and every $f\in\mathbf{F}(\omega_{G,\A}(\tau))$
with projection $p(f)\in\mathbf{F}(\omega_{G,\A})(\tau)$, we have
\[
\left\langle x(\tau),f\right\rangle \leq\left\langle x(\tau),p(f)\right\rangle .
\]
Then $\mathcal{F}_{HN}(x):=\mathcal{F}_{HN}\circ x$ is an exact $\otimes$-functor
$\mathcal{F}_{HN}(x):\Rep(G)\rightarrow\F(\A)$ and for every faithful
representation $\tau$ of $G$, $\mathcal{F}_{HN}(x)$ is the unique
element $\mathcal{F}$ of $\mathbf{F}(\omega_{G,\A})$ which satisfies
the following equivalent conditions:

\begin{enumerate}
\item For every $f\in\mathbf{F}(\omega_{G,\A})$, $\left\Vert \mathcal{F}\right\Vert _{\tau}^{2}-2\left\langle x,\mathcal{F}\right\rangle _{\tau}\leq\left\Vert f\right\Vert _{\tau}^{2}-2\left\langle x,f\right\rangle _{\tau}$.
\item For every $f\in\mathbf{F}(\omega_{G,\A})$, $\left\langle x,f\right\rangle _{\tau}\leq\left\langle \mathcal{F},f\right\rangle _{\tau}$
with equality for $f=\mathcal{F}$.
\item For every $\gamma\in\mathbb{R}$, $\Gr_{\mathcal{F}(\tau)}^{\gamma}(x(\tau))$
is semi-stable of slope $\gamma$. 
\end{enumerate}
\end{prop}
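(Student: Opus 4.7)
The plan is to bootstrap from the previous proposition. For each faithful representation $\tau$ of $G$, our hypothesis provides the unique element $\mathcal{F}_{\tau}\in\mathbf{F}(\omega_{G,\A})$ satisfying $(1)$ and $(2)$, together with the identity $\mathcal{F}_{\tau}(\tau)=\mathcal{F}_{HN}(x(\tau))$. The heart of the argument is to show that $\mathcal{F}_{\tau}$ does \emph{not} depend on $\tau$, and that the resulting element of $\mathbf{F}(\omega_{G,\A})$ coincides, as a functor $\Rep(G)\to\F(\A)$, with $\mathcal{F}_{HN}\circ x$.

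To prove independence, I would pick an arbitrary $\sigma\in\Rep(G)$ (not necessarily faithful), form the faithful representation $\tau\oplus\sigma$, and apply the previous proposition to it: this gives $\mathcal{F}_{\tau\oplus\sigma}(\tau\oplus\sigma)=\mathcal{F}_{HN}(x(\tau\oplus\sigma))$. Since $\mathcal{F}_{\tau\oplus\sigma}\in\mathbf{F}(\omega_{G,\A})$ and $x\in\C^{\otimes}(\omega_{G,\A})$ are both exact $\otimes$-functors, they commute with direct sums. The supporting fact needed here is that $\mathcal{F}_{HN}$ itself commutes with direct sums in $\C$, which reduces to showing that the direct sum of two semi-stable objects of the same slope is semi-stable; this follows from lemma~\ref{lem:Stab} by the standard argument (any strict subobject of $A\oplus B$ surjects onto a strict subobject of $B$ with kernel a strict subobject of $A$, then use additivity of degree and monotonicity on mono-epis). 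Granting this,
\[
\mathcal{F}_{\tau\oplus\sigma}(\tau)\oplus\mathcal{F}_{\tau\oplus\sigma}(\sigma)=\mathcal{F}_{HN}(x(\tau))\oplus\mathcal{F}_{HN}(x(\sigma)).
\]
Projecting onto the $\tau$-summand yields $\mathcal{F}_{\tau\oplus\sigma}(\tau)=\mathcal{F}_{HN}(x(\tau))=\mathcal{F}_{\tau}(\tau)$, so by injectivity of evaluation at the faithful $\tau$ (already established in section~\ref{sub:HypOnA}), $\mathcal{F}_{\tau\oplus\sigma}=\mathcal{F}_{\tau}$ in $\mathbf{F}(\omega_{G,\A})$. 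Projecting onto the $\sigma$-summand then gives $\mathcal{F}_{\tau}(\sigma)=\mathcal{F}_{HN}(x(\sigma))$ for every $\sigma$.

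Write $\mathcal{F}$ for this common value. Then the naive functor $\mathcal{F}_{HN}(x)=\mathcal{F}_{HN}\circ x$ is literally the factorization $\mathcal{F}\in\mathbf{F}(\omega_{G,\A})$, hence an exact $k$-linear $\otimes$-functor by the very definition of $\mathbf{F}(\omega_{G,\A})$. The characterizations $(1)$ and $(2)$ are inherited verbatim from the previous proposition, and $(3)$ is equivalent to $\mathcal{F}(\tau)=\mathcal{F}_{HN}(x(\tau))$ via the third characterization of the HN filtration in~\ref{sub:CarHNQuAb} applied to $x(\tau)\in\C(\omega_{G,\A}(\tau))$; uniqueness of $\mathcal{F}$ under any of the three conditions follows once more from injectivity of evaluation at any faithful $\tau$.

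The main obstacle is the compatibility of $\mathcal{F}_{HN}$ with direct sums in the quasi-abelian setting; once this is in place, the rest is a short formal argument combining the previous proposition with the injectivity of evaluation at a faithful representation.
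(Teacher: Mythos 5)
Your proposal follows the same route as the paper's own proof: apply the previous proposition to $\tau$ and to the faithful representation $\tau\oplus\sigma$, use additivity under direct sums together with injectivity of evaluation at a faithful representation to conclude that $\mathcal{F}_{\tau}$ is independent of $\tau$ and agrees with $\mathcal{F}_{HN}\circ x$ on every $\sigma$, and then read off the three characterizations. The only difference is that you spell out why $\mathcal{F}_{HN}$ commutes with direct sums (the paper simply invokes ``additivity of $\mathcal{F}_{HN}(x)$''), and your justification of that point is sound.
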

\begin{proof}
By the previous proposition, for any faithful $\tau$, the three conditions
are equivalent and determine a unique $\mathcal{F}_{\tau}\in\mathbf{F}(\omega_{G,\A})$
with $\mathcal{F}_{\tau}(\tau)=\mathcal{F}_{HN}(x)(\tau)$. For any
$\sigma\in\Rep(G)$, $\tau'=\tau\oplus\sigma$ is also faithful. By
additivity of $\mathcal{F}_{\tau'}$ and $\mathcal{F}_{HN}(x)$, 
\[
\mathcal{F}_{\tau'}(\tau)\oplus\mathcal{F}_{\tau'}(\sigma)=\mathcal{F}_{\tau'}(\tau')=\mathcal{F}_{HN}(x)(\tau')=\mathcal{F}_{\tau}(\tau)\oplus\mathcal{F}_{HN}\left(x\right)(\sigma)
\]
inside $\mathbf{F}(\omega_{G,\A}(\tau))\times\mathbf{F}(\omega_{G,\A}(\sigma))\subset\mathbf{F}(\omega_{G,\A}(\tau'))$,
therefore 
\[
\mathcal{F}_{\tau}(\tau)=\mathcal{F}_{\tau'}(\tau)\quad\mbox{and}\quad\mathcal{F}_{HN}\left(x\right)(\sigma)=\mathcal{F}_{\tau'}(\sigma).
\]
Since evaluation at $\tau$ is injective, $\mathcal{F}_{\tau}=\mathcal{F}_{\tau'}$
and $\mathcal{F}_{HN}(x)(\sigma)=\mathcal{F}_{\tau}(\sigma)$ for
every $\sigma\in\Rep(G)$. In particular, $\mathcal{F}=\mathcal{F}_{\tau}$
does not depend upon $\tau$ and $\mathcal{F}_{HN}(x)=\mathcal{F}$
is indeed an exact $\otimes$-functor. This proves the proposition.
\end{proof}

\subsection{Compatibility with $\otimes$-products\label{subsec:CompatibilityTensorAxioGood}}

Let us now slightly change our set-up. We keep $k$ and $\A$ fixed,
view $\C$, $\omega_{\C,\A}:\C\rightarrow\A$ and $\deg_{\C}:\sk\,\C\rightarrow\mathbb{R}$
as auxiliary data, and we do not fix $G$ or $\omega_{G,\A}$. 

\subsubsection{~}

A faithful exact $k$-linear $\otimes$-functor $x:\Rep(G)\rightarrow\C$
is \emph{good} if it satisfies the assumption of the previous proposition,
when we view it as an element of $\C^{\otimes}(\omega_{G,\A})$ with
$\omega_{G,\A}=\omega_{\C,\A}\circ x$. Then $\mathcal{F}_{HN}(x):=\mathcal{F}_{HN}\circ x$
is an exact $k$-linear $\otimes$-functor 
\[
\mathcal{F}_{HN}(x):\Rep(G)\rightarrow\F(\A).
\]
We say that a pair of objects $(x_{1},x_{2})$ in $\C$ is \emph{good
}if the following holds. For $i\in\{1,2\}$, set $d_{i}=\rank_{\C}(x_{i})$
and let $\tau_{i}$ and $1_{i}$ be respectively the tautological
and trivial representations of $GL(d_{i})$ on $V(\tau_{i})=k^{d_{i}}$
and $V(1_{i})=k$. We require the existence of a \emph{good} exact
$k$-linear $\otimes$-functor 
\[
x:\Rep\left(GL(d_{1})\times GL(d_{2})\right)\rightarrow\C
\]
mapping $\tau'_{1}=\tau_{1}\boxtimes1_{2}$ to $x_{1}$ and $\tau'_{2}=1_{1}\boxtimes\tau_{2}$
to $x_{2}$. Then 
\[
\mathcal{F}_{HN}(x_{1}\otimes x_{2})=\mathcal{F}_{HN}(x_{1})\otimes\mathcal{F}_{HN}(x_{2}).
\]
We say that $(\C,\deg_{\C})$ is \emph{good} if every pair of objects
in $\C$ is good.
\begin{cor}
\label{cor:GoodIpliesTensor}If $(\C,\deg_{\C})$ is good, then $\mathcal{F}_{HN}:\C\rightarrow\F(\A)$
is a $\otimes$-functor.
\end{cor}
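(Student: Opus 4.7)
The plan is that this corollary is essentially immediate once one unpacks the definition of ``good pair'' and combines it with proposition~\ref{prop:CaractGood}. To say that $\mathcal{F}_{HN}:\C\to\F(\A)$ is a $\otimes$-functor amounts to the following: (i) for every pair $(x_1,x_2)$ in $\C$, one has an equality of $\mathbb{R}$-filtrations $\mathcal{F}_{HN}(x_1\otimes x_2)=\mathcal{F}_{HN}(x_1)\otimes\mathcal{F}_{HN}(x_2)$ inside $\mathbf{F}(\omega_{\C,\A}(x_1\otimes x_2))$; (ii) $\mathcal{F}_{HN}(1_{\C})$ is the trivial filtration on $1_{\A}$; (iii) the unit and associativity coherence isomorphisms lift to $\F(\A)$; (iv) naturality on morphisms is compatible with $\otimes$. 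So I would structure the argument around establishing (i), and then pointing out that (ii)--(iv) are free.

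First I would fix an arbitrary pair $(x_1,x_2)$ of objects of $\C$ and apply the assumption that $(\C,\deg_{\C})$ is good. This furnishes a good exact $k$-linear $\otimes$-functor
\[
x:\Rep\bigl(GL(d_1)\times GL(d_2)\bigr)\longrightarrow\C
\]
with $x(\tau'_i)=x_i$ for $i=1,2$, where $d_i=\rank_{\C}(x_i)$ and $\tau'_1,\tau'_2$ are as in section~\ref{sub:CompatibilityTensorAxioGood}. By proposition~\ref{prop:CaractGood} (applied to this $x$ viewed as an element of $\C^{\otimes}(\omega_{\C,\A}\circ x)$), the composition $\mathcal{F}_{HN}\circ x=\mathcal{F}_{HN}(x):\Rep(GL(d_1)\times GL(d_2))\rightarrow\F(\A)$ is itself an exact $k$-linear $\otimes$-functor. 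Applying its $\otimes$-compatibility to the pair $(\tau'_1,\tau'_2)$, whose tensor product $\tau'_1\otimes\tau'_2$ is sent by $x$ to $x_1\otimes x_2$, I obtain
\[
\mathcal{F}_{HN}(x_1\otimes x_2)=\mathcal{F}_{HN}(x)(\tau'_1\otimes\tau'_2)=\mathcal{F}_{HN}(x)(\tau'_1)\otimes\mathcal{F}_{HN}(x)(\tau'_2)=\mathcal{F}_{HN}(x_1)\otimes\mathcal{F}_{HN}(x_2),
\]
which is exactly (i).

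For (ii)--(iv): since $\mathcal{F}_{HN}$ is a section of the forgetful functor $\omega\circ\omega_{\C,\A}:\F(\A)\to\A$ (which is faithful on morphisms and a bijection on strict subobjects over a fixed object in $\A$), any two morphisms in $\F(\A)$ between the same pair of objects that have the same image in $\A$ are equal. The coherence and naturality compatibilities thus reduce to their (automatic) counterparts in $\C$ or $\A$; similarly the unit $1_{\C}$ is semi-stable of slope $0$ (its strict subobjects form a one-element lattice, as $\omega_{\C,\A}(1_{\C})=1_{\A}$ is simple in $\A$) so $\mathcal{F}_{HN}(1_{\C})$ is the trivial filtration. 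Since there is essentially no obstacle beyond carefully citing proposition~\ref{prop:CaractGood}, the ``hard part'' of the corollary has been front-loaded into the definition of a good pair itself.
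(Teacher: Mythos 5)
Your argument for the key point (i) is exactly the paper's: the equality $\mathcal{F}_{HN}(x_{1}\otimes x_{2})=\mathcal{F}_{HN}(x_{1})\otimes\mathcal{F}_{HN}(x_{2})$ is already derived, via proposition~\ref{prop:CaractGood} applied to the good functor $x$ evaluated at $\tau'_{1}\otimes\tau'_{2}$, inside the very definition of a good pair, and the corollary is then immediate. One small caveat in your point (ii): simplicity of $1_{\A}$ only forces $\mathcal{F}_{HN}(1_{\C})$ to have a single jump, located at $\mu(1_{\C})=\deg_{\C}(1_{\C})$, which need not be $0$ a priori; to see that the jump is at $0$ you should instead apply goodness to the pair $(1_{\C},1_{\C})$ (giving $\mu=2\mu$, hence $\mu=0$), rather than assert semi-stability of slope $0$ outright.
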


\subsubsection{~\label{subsec:FiberProductGood}}

Suppose that $(\omega_{i}:\C_{i}\rightarrow\A,\deg_{i})_{i\in I}$
is a finite collection of data as above. Let $\omega:\C\rightarrow\A$
be the fibered product of the $\omega_{i}$'s, with fiber $\C(X)=\prod\C_{i}(X)$
over any object $X$ of $\A$ and with homomorphisms given by 
\[
\Hom_{\C}((x_{i}),(y_{i}))\eqd\cap_{i}\Hom_{\C_{i}}(x_{i},y_{i})\quad\mbox{in}\quad\Hom_{\A}(X,Y)
\]
for $(x_{i})\in\C(X)$, $(y_{i})\in\C(Y)$. Then $\C$ is yet another
essentially small quasi-abelian $k$-linear $\otimes$-category equipped
with a faithful exact $k$-linear $\otimes$-functor $\omega:\C\rightarrow\A$
which identifies $\Sub((x_{i}))$ and $\Sub(X)$ for every $(x_{i})\in\C(X)$.
Fix $\lambda=(\lambda_{i})\in\mathbb{R}^{I}$ with $\lambda_{i}>0$
and for every object $x=(x_{i})$ of $\C$, set $\deg_{\lambda}(x):=\sum\lambda_{i}\deg_{i}(x_{i})$.
Then 
\[
\deg_{\lambda}:\sk\,\C\rightarrow\mathbb{R}
\]
is a degree function on $\C$ and for every $X\in\A$, $x=(x_{i})\in\C(X)$
and $\mathcal{F}\in\mathbf{F}(X)$, 
\[
\left\langle x,\mathcal{F}\right\rangle =\sum\lambda_{i}\left\langle x_{i},\mathcal{F}\right\rangle .
\]
Thus an exact $k$-linear $\otimes$-functor $x:\Rep(G)\rightarrow\C$
is good if it has good components $x_{i}:\Rep(G)\rightarrow\C_{i}$,
a pair $((x_{i}),(y_{i}))$ in $\C$ is good if it has good components
$(x_{i},y_{i})$ in $\C_{i}$, and $(\C,\deg_{\lambda})$ is good
if the $(\C_{i},\deg_{i})$'s are, in which case the Harder-Narasimhan
filtration $\mathcal{F}_{HN}:\C\rightarrow\F(\A)$ is compatible with
tensor products.

\subsubsection{~}

Our use of an auxiliary reductive group $G$ to establish the compatibility
of Harder-Narasimhan filtrations with tensor products may obscure
the main idea, which goes back to at least Totaro's \cite{To96}:
once the Harder-Narasimhan filtration has been characterized as the
(unique) solution of an optimization problem on a space of $\mathbb{R}$-filtrations,
the desired compatibility $\mathcal{F}_{HN}(x_{1}\otimes x_{2})=\mathcal{F}_{HN}(x_{1})\otimes\mathcal{F}_{HN}(x_{2})$
follows from an inequality of the form $\left\langle x_{1}\otimes x_{2},f\right\rangle \leq\left\langle x_{1}\otimes x_{2},p(f)\right\rangle $,
for every $\mathbb{R}$-filtration $f\in\mathbf{F}(x_{1}\otimes x_{2})$,
where $p$ is the convex projection of $\mathbf{F}(x_{1}\otimes x_{2})$
onto the image of the tensor product map $\otimes:\mathbf{F}(x_{1})\times\mathbf{F}(x_{2})\rightarrow\mathbf{F}(x_{1}\otimes x_{2})$.
Note that $p(f)$ is itself the (unique) solution of a different and
easier optimization problem. For a strict subobject $z$ of $x_{1}\otimes x_{2}$
mapping to some $f$ in $\mathbf{F}(x_{1}\otimes x_{2})$ under the
embedding of section~\ref{subsec:embeddingX2F(X)}, a pair of $\mathbb{R}$-filtrations
$(\mathcal{F}_{1},\mathcal{F}_{2})\in\mathbf{F}(x_{1})\times\mathbf{F}(x_{2})$
with the property that $\mathcal{F}_{1}\otimes\mathcal{F}_{2}=p(f)$
in $\mathbf{F}(x_{1}\otimes x_{2})$ is what would be called a Kempf
filtration in~\cite{To96} or \cite{LeWE16}. In our set-up, the
tensor product map is the evaluation map $\mathbf{F}(\omega_{G,\A})\rightarrow\mathbf{F}(\omega_{G,\A}(\tau))$
induced by the tensor product representation $\tau$ of $G:=GL(d_{1})\times GL(d_{2})$
(with $d_{i}=\rank(x_{i})$). It turns out that in all the examples
we know, the proofs of the desired inequalities work equally well
for arbitrary $G$ and $\tau$, and the final results thus obtained
are stronger: in addition to their compatibility with $\otimes$-products,
our Harder-Narasimhan filtrations also have some exactness properties,
a feature that usually required further arguments, most notably Haboush's
theorem \cite{Ha75}. Of course, our set-up is also tailor-made for
the applications that we have in mind.

\section{Examples of good $\C$'s\label{sec:Examples}}

\subsection{Filtered vector spaces\label{subsec:FilteredVectorSpaces}}

\subsubsection{~}

We consider the following set-up: $k$ is a field, $\ell$ is an extension
of $k$ and 
\[
\A=\Vect_{k}\quad\mbox{and}\quad\C=\Fil_{k}^{\ell}\quad\mbox{with}\quad\left\{ \begin{array}{rcl}
\omega(V,\mathcal{F}) & = & V,\\
\rank(V,\mathcal{F}) & = & \dim_{k}V,\\
\deg(V,\mathcal{F}) & = & \deg(\mathcal{F}).
\end{array}\right.
\]
Here $\Fil_{k}^{\ell}$ is the category of all pairs $(V,\mathcal{F})$
where $V$ is a finite dimensional $k$-vector space and $\mathcal{F}$
is an $\mathbb{R}$-filtration on $V_{\ell}:=V\otimes_{k}\ell$, i.e.~a
collection $\mathcal{F}=(\mathcal{F}^{\gamma})_{\gamma\in\mathbb{R}}$
of $\ell$-subspaces of $V_{\ell}$ such that $\mathcal{F}^{\gamma}\subset\mathcal{F}^{\gamma\prime}$
if $\gamma'\leq\gamma$, $\mathcal{F}^{\gamma}=V_{\ell}$ for $\gamma\ll0$,
$\mathcal{F}^{\gamma}=0$ for $\gamma\gg0$ and $\mathcal{F}^{\gamma}=\cap_{\gamma'<\gamma}\mathcal{F}^{\gamma'}$
for every $\gamma\in\mathbb{R}$. A \emph{morphism} $f:(V_{1},\mathcal{F}_{1})\rightarrow(V_{2},\mathcal{F}_{2})$
is a $k$-linear morphism $f:V_{1}\rightarrow V_{2}$ such that $f_{\ell}(\mathcal{F}_{1}^{\gamma})\subset\mathcal{F}_{2}^{\gamma}$
for every $\gamma\in\mathbb{R}$, where $f_{\ell}:V_{1,\ell}\rightarrow V_{2,\ell}$
is the $\ell$-linear extension of $f$. The kernel and cokernel of
$f$ are given by $(\ker f,\mathcal{F}_{1,\ker f})$ and $(\coker f,\mathcal{F}_{2,\coker f})$
where $\mathcal{F}_{1,\ker f}^{\gamma}$ and $\mathcal{F}_{2,\coker f}^{\gamma}$
are respectively the inverse and direct images of $\mathcal{F}_{1}^{\gamma}$
and $\mathcal{F}_{2}^{\gamma}$ under $(\ker f)_{\ell}\hookrightarrow V_{1,\ell}$
and $V_{2,\ell}\twoheadrightarrow(\coker f)_{\ell}$. The morphism
$f$ is strict if and only if $\mathcal{F}_{2}^{\gamma}\cap f_{\ell}(V_{1,\ell})=f_{\ell}(\mathcal{F}_{1}^{\gamma})$
for every $\gamma\in\mathbb{R}$. It is a mono-epi if and only if
the underlying map $f:V_{1}\rightarrow V_{2}$ is an isomorphism.
The category $\Fil_{k}^{\ell}$ is quasi-abelian, the rank and degree
functions are additive on short exact sequences, and they are respectively
constant and non-decreasing on mono-epis. More precisely if $f:(V_{1},\mathcal{F}_{1})\rightarrow(V_{2},\mathcal{F}_{2})$
is a mono-epi, then $\deg\mathcal{F}_{1}\leq\deg\mathcal{F}_{2}$
with equality if and only if $f$ is an isomorphism. We thus obtain
a HN-formalism on $\Fil_{k}^{\ell}$. There is also a tensor product,
given by 
\begin{eqnarray*}
(V_{1},\mathcal{F}_{1})\otimes(V_{2},\mathcal{F}_{2}) & \eqd & (V_{1}\otimes_{k}V_{2},\mathcal{F}_{1}\otimes\mathcal{F}_{2}),\\
\mbox{with}\quad(\mathcal{F}_{1}\otimes\mathcal{F}_{2})^{\gamma} & \eqd & \sum_{\gamma_{1}+\gamma_{2}=\gamma}\mathcal{F}_{1}^{\gamma_{1}}\otimes_{\ell}\mathcal{F}_{2}^{\gamma_{2}}.
\end{eqnarray*}
We will show that if $\ell$ is a separable extension of $k$, the
HN-filtration is compatible with $\otimes$-products. This has been
known for some time, see for instance~\cite[I.2]{DaOrRa10}, where
a counter-example is also given when $\ell$ is a finite inseparable
extension of $k$. For $k=\ell$, we simplify our notations to $\Fil_{k}:=\Fil_{k}^{k}=\F(\Vect_{k})$. 

\subsubsection{~}

Let $\mathbb{F}(G)$ be the smooth $k$-scheme denoted by $\mathbb{F}^{\mathbb{R}}(G)$
in \cite{Co14}. Thus
\[
\mathbf{F}(G,\ell)\eqd\mathbb{F}(G)(\ell)=\mathbf{F}(\omega_{G,\ell})=(\Fil_{k}^{\ell})^{\otimes}(\omega_{G,k})
\]
is the vectorial Tits building of $G_{\ell}$, where $\omega_{G,\ell}:\Rep(G)\rightarrow\Vect_{\ell}$
is the standard fiber functor. The choice of a finite dimensional
faithful representation $\tau$ of $G$ equips these buildings with
compatible complete CAT(0)-metrics $d_{\tau}$ whose induced topologies
do not depend upon the chosen $\tau$. These constructions are covariantly
functorial in $G$, compatible with products and closed immersions,
and covariantly functorial in $\ell$. We thus obtain a (strictly)
commutative diagram of functors \xyC{4pc}
\[
\xymatrix{\Red(k)\times\Ext(k)\ar[r]\sp(0,6){\mathbf{F}(-,-)} & \Top\\
\Red(G)\times\Ext(k)\ar[r]\sp(0,6){\left(\mathbf{F}(-,-),d_{\tau}\right)}\ar@{^{(}->}[u] & \CCAT\ar@{^{(}->}[u]
}
\]
where $\Red(k)$ is the category of reductive groups over $k$, $\Red(G)$
is the poset of all (closed) reductive subgroups $H$ of $G$ viewed
as a subcategory of $\Red(k)$, $\Ext(k)$ is the category of field
extensions $\ell$ of $k$, $\Top$ is the category of topological
spaces and continuous maps, and $\CCAT$ is the category of complete
CAT(0)-metric spaces and distance preserving maps. For $\tau,$ $H$
and $\ell$ as above, the commutative diagram\xyC{3pc} 
\[
\xymatrix{\left(\mathbf{F}(H,k),d_{\tau}\right)\ar@{^{(}->}[r]\ar@{^{(}->}[d] & \left(\mathbf{F}(G,k),d_{\tau}\right)\ar@{^{(}->}[d]\\
\left(\mathbf{F}(H,\ell),d_{\tau}\right)\ar@{^{(}->}[r] & \left(\mathbf{F}(G,\ell),d_{\tau}\right)
}
\]
is cartesian in $\CCAT$ since $\mathbb{F}(H)(k)=\mathbb{F}(H)(\ell)\cap\mathbb{F}(G)(k)$
inside $\mathbb{F}(G)(\ell)$. Using~\cite[II.2.4]{BrHa99}, we obtain
a usually non-commutative diagram of non-expanding retractions
\[
\xymatrix{\left(\mathbf{F}(H,k),d_{\tau}\right) & \left(\mathbf{F}(G,k),d_{\tau}\right)\ar@{->>}[l]_{p_{k}}\\
\left(\mathbf{F}(H,\ell),d_{\tau}\right)\ar@{->>}[u]^{\pi_{H}} & \left(\mathbf{F}(G,\ell),d_{\tau}\right)\ar@{->>}[u]^{\pi_{G}}\ar@{->>}[l]_{p_{\ell}}
}
\]
where each map sends a point in its source to the unique closest point
in its target. 
\begin{thm}
\label{thm:FunctFil}If $\ell$ is a separable extension of $k$,
the diagrams\xyC{3pc}
\[
\xymatrix{\mathbf{F}(H,k)\ar@{^{(}->}[d] & \mathbf{F}(G,k)\ar@{^{(}->}[d]\ar@{->>}[l]_{p_{k}}\\
\mathbf{F}(H,\ell) & \mathbf{F}(G,\ell)\ar@{->>}[l]_{p_{\ell}}
}
\quad\mbox{and}\quad\xymatrix{\mathbf{F}(H,k)\ar@{^{(}->}[r] & \mathbf{F}(G,k)\\
\mathbf{F}(H,\ell)\ar@{^{(}->}[r]\ar@{->>}[u]^{\pi_{H}} & \mathbf{F}(G,\ell)\ar@{->>}[u]^{\pi_{G}}
}
\]
are commutative, moreover $\pi_{G}$ does not depend upon $\tau$
and defines a retraction
\[
\pi:\mathbf{F}(-,\ell)\twoheadrightarrow\mathbf{F}(-,k)
\]
of the embedding $\mathbf{F}(-,k)\hookrightarrow\mathbf{F}(-,\ell)$
of functors from $\Red(k)$ to $\Top$. Finally, 
\[
\forall(x,y)\in\mathbf{F}(H,\ell)\times\mathbf{F}(G,k):\qquad\left\langle x,y\right\rangle _{\tau}\leq\left\langle x,p_{k}(y)\right\rangle _{\tau}
\]
\end{thm}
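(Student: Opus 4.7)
The plan is to reduce to the case of a finite Galois extension $\ell/k$ and then exploit the Galois action on the CAT(0)-spaces. Writing $\ell$ as a filtered colimit of finite separable subextensions of $k$, each embedding into its own finite Galois closure, and using the compatibility of the buildings, metrics and projections with field extensions, I would reduce to the case where $\ell/k$ is finite Galois with group $\Gamma=\Gal(\ell/k)$. Then $\Gamma$ acts by isometries on $\mathbf{F}(G,\ell)$ and $\mathbf{F}(H,\ell)$ for every $d_\tau$, with fixed-point sets $\mathbf{F}(G,k)$ and $\mathbf{F}(H,k)$ respectively.

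Commutativity of the two diagrams then follows almost formally from $\Gamma$-equivariance of the convex projection $p_\ell$ and uniqueness of closest points. For the first diagram, if $y\in\mathbf{F}(G,k)$ then $p_\ell(y)\in\mathbf{F}(H,\ell)$ is $\Gamma$-fixed by equivariance and uniqueness, hence lies in $\mathbf{F}(H,k)$, and minimality inside $\mathbf{F}(H,\ell)\supset\mathbf{F}(H,k)$ forces $p_\ell(y)=p_k(y)$. For the second diagram, setting $z=\pi_G(x)$ for $x\in\mathbf{F}(H,\ell)$, the first diagram gives $p_\ell(z)\in\mathbf{F}(H,k)\subset\mathbf{F}(G,k)$, while non-expansion of $p_\ell$ together with $p_\ell(x)=x$ yields $d_\tau(x,p_\ell(z))\le d_\tau(x,z)$; minimality of $z$ in $\mathbf{F}(G,k)$ then forces $p_\ell(z)=z\in\mathbf{F}(H,k)$, so $z$ is the closest point in $\mathbf{F}(H,k)$ to $x$ and hence equals $\pi_H(x)$.

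The main obstacle is the independence of $\pi_G$ from $\tau$ and its naturality on $\Red(k)$. The idea is to identify $\pi_G(x)$ with the Galois average of $x$. First, the CAT(0) variance inequality applied to the orbit $\Gamma\cdot x$ together with the observation $d_\tau(\gamma x,y)=d_\tau(x,y)$ for every $\gamma\in\Gamma$ and every $y\in\mathbf{F}(G,k)$ shows that $\pi_G(x)$ is the Cartan barycenter of the orbit. Next, I would argue that the orbit lies in a single $\Gamma$-stable apartment: taking a $1$-parameter subgroup $\chi$ representing $x$, the subtorus $S\subset G_\ell$ generated by the $\Gamma$-conjugates of $\chi$ is $\Gamma$-stable (hence defined over $k$), and the maximal $k$-torus of $Z_G(S)$ yields a $\Gamma$-stable apartment $\mathbf{F}(T_\ell,\ell)$ containing $\Gamma\cdot x$ (see \cite{Co14} for the details on vectorial Tits buildings). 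On this apartment -- a finite-dimensional real vector space on which $\Gamma$ acts linearly preserving the $d_\tau$-scalar product -- the Cartan barycenter equals the affine average $\frac{1}{|\Gamma|}\sum_{\gamma\in\Gamma}\gamma x$, an expression that makes no reference to the inner product and is therefore independent of $\tau$. Naturality of $\pi$ under any morphism $\varphi:H\to G$ in $\Red(k)$ follows from the same characterization, since $\varphi_*$ is $\Gamma$-equivariant and commutes with averaging.

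Finally, the last inequality is a direct application of~\ref{sub:ProjAttau}: since $\mathbf{F}(H,\ell)$ is a closed convex subset of the ambient euclidean space $\mathbf{F}(\omega_{H,\ell}(\tau))=\mathbf{F}(V(\tau)_\ell)$ and the associated convex projection restricts on $\mathbf{F}(G,\ell)$ to $p_\ell$, we obtain $\langle x,y\rangle_\tau\le\langle x,p_\ell(y)\rangle_\tau$ for every $x\in\mathbf{F}(H,\ell)$ and $y\in\mathbf{F}(G,\ell)$. Specializing to $y\in\mathbf{F}(G,k)$ and invoking the first-diagram identity $p_\ell(y)=p_k(y)$ yields the stated inequality.
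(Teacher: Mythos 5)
Your reduction to the finite Galois case does not cover the generality the theorem claims, and this is a genuine gap. A separable field extension $\ell/k$ need not be algebraic — for instance $\ell=k(t)$ is separable, and this transcendental case is essential in later applications — so $\ell$ is \emph{not} a filtered colimit of finite separable subextensions in general. What one can do (and what the paper does) is reduce to $\ell$ finitely generated, then invoke Bourbaki to split the extension as a tower of a purely transcendental step followed by a finite separable one. The finite separable step is handled by Galois descent much as you suggest, but the transcendental step requires a separate argument: the paper uses the action of the full automorphism group $\mathrm{Aut}(k(t)/k)=PGL_2(k)$ when $k$ is infinite (an infinite group, so your barycenter device does not apply), and a non-bijective Frobenius isometry when $k=\mathbb{F}_q$. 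Your proof silently drops this case, and the barycenter argument you build on top of it cannot be repaired for infinite orbits.

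Your treatment of the two commutative diagrams and of the final inequality is essentially the paper's argument (equivariance of the convex projection plus uniqueness of closest points, and~\ref{sub:ProjAttau} combined with the first diagram), and would be fine once the case analysis above is in place. Your treatment of independence of $\tau$ and functoriality is, however, a genuine departure. The paper deduces both from the Pythagorean decomposition of the metric on a product $\mathbf{F}(G_1\times G_2,-)$ together with the diagonal embedding $G\hookrightarrow G\times G$; this uses only the already-established commutativity of the second diagram, so it covers the transcendental case for free and needs no structure theory. Your alternative — identifying $\pi_G(x)$ with the Cartan barycenter of $\Gamma\cdot x$ and computing it in a $\Gamma$-stable apartment as an affine average — is a nice idea in the finite Galois case, and the first half (that the $d_\tau$-barycenter of the orbit lies in the $\Gamma$-fixed locus and coincides with $\pi_G(x)$) is correct. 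But the second half is under-argued: a point $x\in\mathbf{F}(G,\ell)$ is an $\mathbb{R}$-filtration, not in general a cocharacter, so ``a $1$-parameter subgroup $\chi$ representing $x$'' does not exist; one would have to replace it by, say, the (possibly non-maximal) $k$-torus attached to the Levi of the facet of $x$, check it can be chosen defined over $k$ and with a $\Gamma$-stable apartment containing the whole orbit, and justify the passage from barycenter to affine average on that apartment. Even if all of this can be patched, it only gives the finite Galois case, while the paper's argument is shorter and fully general.
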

\begin{proof}
This is essentially formal. 

\emph{Commutativity of the first diagram.} We have to show that for
every $x\in\mathbf{F}(G,k)$, $y=p_{\ell}(x)$ belongs to $\mathbf{F}(H,k)\subset\mathbf{F}(H,\ell)$
\textendash{} for then indeed $y=p_{k}(x)$. Since $\mathbf{F}(H,\ell)=\mathbb{F}(H)(\ell)$
and $\mathbb{F}(H)$ is locally of finite type over $k$, there is
a finitely generated subextension $\ell'$ of $\ell/k$ such that
$y$ belongs to $\mathbb{F}(H)(\ell')=\mathbf{F}(H,\ell')$. Plainly
$y=p_{\ell'}(x)$, and we may thus assume that $\ell=\ell'$ is a
finitely generated separable extension of $\ell$. Then~\cite[V, \S $16$, $n^\circ 7$, Corollaire of Théorème $5$]{BoAl4a7}
reduces us to the following cases: $(1)$ $\ell=k(t)$ is a purely
transcendental extension of $k$ or $(2)$ $\ell$ is a separable
algebraic extension of $k$. Note that in any case, $y$ is fixed
by the automorphism group $\Gamma$ of $\ell/k$. Indeed, $\Gamma$
acts by isometries on $\mathbf{F}(G,\ell)$ and $\mathbf{F}(H,\ell)$,
thus $p_{\ell}$ is $\Gamma$-equivariant and $\Gamma$ fixes $y=p_{\ell}(x)$
since it fixes $x\in\mathbf{F}(G,k)$. This settles the following
sub-cases, where $k$ is the subfield of $\ell$ fixed by $\Gamma$:
$(1')$ $\ell=k(t)$ with $k$ infinite (where $\Gamma=PGL_{2}(k)$),
and $(2')$ $\ell$ is Galois over $k$ (where $\Gamma=\Gal(\ell/k)$).
If $\ell$ is merely algebraic and separable over $k$, let $\ell'$
be its Galois closure in a suitable algebraic extension. Then $\ell'/\ell$
and $\ell'/k$ are Galois, thus $p_{\ell}(x)=p_{\ell'}(x)=p_{k}(x)$
by $(2')$, which settles case $(2)$. Finally if $\ell=k(t)$ with
$k=\mathbb{F}_{q}$ finite, the Frobenius $\sigma(t)=t^{q}$, also
not bijective on $\ell$, still induces a distance preserving map
on $\mathbf{F}(G,\ell)$ and $\mathbf{F}(H,\ell)$. Thus $d_{\tau}(x,y)=d_{\tau}(x,\sigma y)$
since $\sigma x=x$, but then $\sigma y=y$ by definition of $y=p_{\ell}(x)$,
and $y\in\mathbf{F}(G,k)$ as desired.

\emph{Final inequality. }For $x,y\in\mathbf{F}(H,\ell)\times\mathbf{F}(G,\ell)$,
$\left\langle x,y\right\rangle _{\tau}\leq\left\langle x,p_{\ell}(y)\right\rangle _{\tau}$
by \cite[5.7.7]{Co14} and for $y\in\mathbf{F}(G,k)$, also $p_{\ell}(y)=p_{k}(y)$
by commutativity of the first diagram.

\emph{Commutativity of the second diagram. }For $x\in\mathbf{F}(H,\ell)$
and $y=\pi_{G}(x)\in\mathbf{F}(G,k)$, 
\[
d_{\tau}\left(x,y\right)\geq d_{\tau}\left(p_{\ell}(x),p_{\ell}(y)\right)=d_{\tau}\left(x,p_{k}(y)\right)
\]
since $p_{\ell}$ is non-expanding, equal to the identity on $\mathbf{F}(H,\ell)$
and to $p_{k}$ on $\mathbf{F}(G,k)$ by commutativity of the first
diagram. Since $p_{k}(y)\in\mathbf{F}(H,k)\subset\mathbf{F}(G,k)$,
it follows that $p_{k}(y)=y$ by definition of $y$. In particular
$y\in\mathbf{F}(H,k)$, thus also $y=\pi_{H}(x)$.

\emph{Independence of $\tau$ and functoriality. }Let $G_{1}$ and
$G_{2}$ be reductive groups over $k$ with faithful representations
$\tau_{1}$ and $\tau_{2}$. Set $\tau_{3}=\tau_{1}\boxplus\tau_{2}$,
a faithful representation of $G_{3}=G_{1}\times G_{2}$. Then $\mathbb{F}(G_{3})=\mathbb{F}(G_{1})\times_{k}\mathbb{F}(G_{2})$
and for every extension $m$ of $k$, 
\[
\left(\mathbf{F}(G_{3},m),d_{\tau_{3}}\right)=\left(\mathbf{F}(G_{1},m),d_{\tau_{1}}\right)\times\left(\mathbf{F}(G_{2},m),d_{\tau_{2}}\right)
\]
in $\CCAT$. This actually means that for $x_{3}=(x_{1},x_{2})$ and
$y_{3}=(y_{1},y_{2})$ in 
\[
\mathbf{F}(G_{3},m)=\mathbf{F}(G_{1},m)\times\mathbf{F}(G_{2},m)
\]
we have the usual Pythagorean formula 
\[
d_{\tau_{3}}(x_{3},y_{3})=\sqrt{d_{\tau_{1}}(x_{1},y_{1})^{2}+d_{\tau_{2}}(x_{2},y_{2})^{2}}.
\]
It immediately follows that 
\[
\left(\mathbf{F}(G_{3},\ell)\stackrel{\pi_{3}}{\twoheadrightarrow}\mathbf{F}(G_{3},k)\right)=\left(\mathbf{F}(G_{1},\ell)\times\mathbf{F}(G_{2},\ell)\stackrel{(\pi_{1},\pi_{2})}{\twoheadrightarrow}\mathbf{F}(G_{1},k)\times\mathbf{F}(G_{2},k)\right)
\]
where $\pi_{i}=\pi_{G_{i}}$ is the retraction attached to $\tau_{i}$.
Applying this to $G_{1}=G_{2}=G$ and using the commutativity of our
second diagram for the diagonal embedding $\Delta:G\hookrightarrow G\times G$,
we obtain $\Delta\circ\pi_{3}=(\pi_{1},\pi_{2})\circ\Delta$, where
$\pi_{3}$ is now the retraction $\pi_{G}$ attached to the faithful
representation $\tau_{1}\oplus\tau_{2}=\Delta^{\ast}(\tau_{3})$ of
$G$. Thus $\pi_{1}=\pi_{3}=\pi_{2}$, i.e.~$\pi_{G}$ does not depend
upon the choice of $\tau$. Using the commutativity of our second
diagram for the graph embedding $\Delta_{f}:G_{1}\hookrightarrow G_{1}\times G_{2}$
of a morphism $f:G_{1}\rightarrow G_{2}$, we similarly obtain the
functoriality of $G\mapsto\pi_{G}$.
\end{proof}

\subsubsection{~}

For $G=GL(V)$, evaluation at the tautological representation $\tau$
of $G$ on $V$ identifies $\mathbf{F}(G,-)$ with $\mathbf{F}(V\otimes_{k}-)$.
For any reductive group $G$ with a faithful representation $\tau$
on $V=V(\tau)$, the projection $p:\mathbf{F}(V)\twoheadrightarrow\mathbf{F}(G,k)$
of proposition~\ref{prop:CaractGood} becomes the projection $p_{k}:\mathbf{F}(GL(V),k)\twoheadrightarrow\mathbf{F}(G,k)$
of the previous theorem for the embedding $\tau:G\hookrightarrow GL(V)$.
Thus if $\ell$ is a separable extension of $k$, then every $x\in\mathbf{F}(G,\ell)$
is good. Similarly for every pair $x_{1}=(V_{1},\mathcal{F}_{1})$
and $x_{2}=(V_{2},\mathcal{F}_{2})$ of objects in $\Fil_{k}^{\ell}$,
$\mathbf{F}(GL(V_{1})\times GL(V_{2}),\ell)\simeq\mathbf{F}(V_{1}\otimes_{k}\ell)\times\mathbf{F}(V_{2}\otimes_{k}\ell)$
contains $(\mathcal{F}_{1},\mathcal{F}_{2})$, which implies that
then also $\left(\Fil_{k}^{\ell},\deg\right)$ is good. We thus obtain:
\begin{prop}
Suppose that $\ell$ is a separable extension of $k$. Then
\[
\mathcal{F}_{HN}:\Fil_{k}^{\ell}\rightarrow\Fil_{k}\mbox{ is a \ensuremath{\otimes}-functor}.
\]
For every $x\in\mathbf{F}(G,\ell)$, $\mathcal{F}_{HN}(x):=\mathcal{F}_{HN}\circ x$
belongs to $\mathbf{F}(G,k)$, i.e.
\[
\mathcal{F}_{HN}(x):\Rep(G)\rightarrow\Fil_{k}\mbox{\,\ is an exact \ensuremath{\otimes}-functor.}
\]
Moreover, $\mathcal{F}_{HN}(x)=\pi_{G}(x)$ in $\mathbf{F}(G,k)$.
\end{prop}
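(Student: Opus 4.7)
The plan is to verify, for every $x\in\mathbf{F}(G,\ell)$ viewed as an exact $k$-linear $\otimes$-functor $x:\Rep(G)\to\Fil_k^\ell$, the hypothesis of Proposition~\ref{prop:CaractGood}, and separately to check that $(\Fil_k^\ell,\deg)$ is good in the sense of section~\ref{sub:CompatibilityTensorAxioGood}. Once both are in hand, the second assertion follows from Proposition~\ref{prop:CaractGood}, the first from Corollary~\ref{cor:GoodIpliesTensor}, and the identification $\mathcal{F}_{HN}(x)=\pi_G(x)$ from characterization~$(1)$ of that proposition.

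For a faithful representation $\tau:G\hookrightarrow GL(V)$ with $V=V(\tau)$, evaluation at $\tau$ identifies $\mathbf{F}(\omega_{G,\A}(\tau))=\mathbf{F}(V)$ with $\mathbf{F}(GL(V),k)$, and the closed convex subset $\mathbf{F}(\omega_{G,\A})(\tau)\subset\mathbf{F}(V)$ with the image of $\mathbf{F}(G,k)$ under $\tau$, as noted in the remarks preceding this proposition. Under these identifications, the convex projection $p$ of section~\ref{sub:ProjAttau} coincides with the retraction $p_k$ of Theorem~\ref{thm:FunctFil} for the inclusion $\tau(G)\subset GL(V)$, and the required inequality
\[
\langle x(\tau),f\rangle \leq \langle x(\tau),p(f)\rangle,\qquad f\in\mathbf{F}(GL(V),k),
\]
is then precisely the final inequality of Theorem~\ref{thm:FunctFil} applied with $H=\tau(G)$ and $x(\tau)\in\mathbf{F}(H,\ell)$. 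This proves that every $x\in\mathbf{F}(G,\ell)$ is good. Applying it to $G=GL(d_1)\times GL(d_2)$ and to the datum $(\mathcal{F}_1,\mathcal{F}_2)$ corresponding to a point of $\mathbf{F}(GL(d_1)\times GL(d_2),\ell)$ via evaluation at the two tautological representations shows that every pair of objects in $\Fil_k^\ell$ is good, hence that $(\Fil_k^\ell,\deg)$ is good.

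Proposition~\ref{prop:CaractGood} then yields that $\mathcal{F}_{HN}(x)=\mathcal{F}_{HN}\circ x$ is an exact $k$-linear $\otimes$-functor $\Rep(G)\to\F(\Vect_k)=\Fil_k$ lying in $\mathbf{F}(\omega_{G,k})=\mathbf{F}(G,k)$, and Corollary~\ref{cor:GoodIpliesTensor} gives the first assertion. For the identification $\mathcal{F}_{HN}(x)=\pi_G(x)$, characterization~$(1)$ identifies $\mathcal{F}_{HN}(x)$ as the unique $\mathcal{F}\in\mathbf{F}(G,k)$ minimizing $\|\mathcal{F}\|_\tau^2-2\langle x,\mathcal{F}\rangle_\tau=d_\tau(x,\mathcal{F})^2-\|x\|_\tau^2$; since $\|x\|_\tau^2$ is constant, this is the same as minimizing $\mathcal{F}\mapsto d_\tau(x,\mathcal{F})$ over $\mathbf{F}(G,k)\subset\mathbf{F}(G,\ell)$, whose unique minimizer is $\pi_G(x)$ by definition. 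The whole argument rests on Theorem~\ref{thm:FunctFil}, where the separability of $\ell/k$ genuinely enters through its Galois and purely transcendental reductions; once that theorem is granted, the present proposition is essentially an exercise in unpacking definitions, the only subtlety being the identification of the abstract CAT$(0)$-projection of section~\ref{sub:ProjAttau} with $p_k$, both of which are the closest-point retraction onto the same closed convex subset of $\mathbf{F}(V)$.
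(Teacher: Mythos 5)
Your proof is correct and follows the same route as the paper. The paper establishes the goodness of every $x\in\mathbf{F}(G,\ell)$ and of every pair $(x_1,x_2)$ in the paragraph directly preceding the proposition, using exactly the identification you spell out — that the abstract convex projection $p$ of section~\ref{sub:ProjAttau} is the retraction $p_k:\mathbf{F}(GL(V),k)\twoheadrightarrow\mathbf{F}(G,k)$ so that the hypothesis of Proposition~\ref{prop:CaractGood} becomes the final inequality of Theorem~\ref{thm:FunctFil} — and then the stated proof dispatches the last assertion $\mathcal{F}_{HN}(x)=\pi_G(x)$ via characterization~$(1)$ exactly as you do. The only thing you omit is the paper's parenthetical alternative for the last assertion (deducing $\pi_G(x)(\sigma)=\mathcal{F}_{HN}(x)(\sigma)$ for all $\sigma$ directly from the functoriality of $G\mapsto\pi_G$ together with~\ref{sub:ExampleProj}), but you do not need it.
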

\begin{proof}
The last assertion follows either from~proposition~\ref{prop:CaractGood}
(both $\mathcal{F}_{HN}(x)$ and $\pi_{G}(x)$ minimize $f\mapsto d_{\tau}(x,f)^{2}=\left\Vert x\right\Vert _{\tau}^{2}+\left\Vert f\right\Vert _{\tau}^{2}-2\left\langle x,f\right\rangle _{\tau}$
on $\mathbf{F}(G,k)$) or from the functoriality of $\pi_{G}$ (for
every $\sigma\in\Rep(G)$, $\pi_{G}(x)(\sigma)=\mathcal{F}_{HN}(x)(\sigma)$
by \ref{subsec:ExampleProj}).
\end{proof}
\noindent Once we know that the projection $\pi_{G}:\mathbf{F}(G,\ell)\twoheadrightarrow\mathbf{F}(G,k)$
computes the Harder-Narasimhan filtrations, the compatibility of the
latter with tensor product constructions also directly follows from
the functoriality of $G\mapsto\pi_{G}$:
\begin{prop}
The Harder-Narasimhan functor $\mathcal{F}_{HN}:\Fil_{k}^{\ell}\rightarrow\Fil_{k}$
is compatible with tensor products, symmetric and exterior powers,
and duals.
\end{prop}
\begin{proof}
Apply the functoriality of $G\mapsto\pi_{G}$ to $GL(V_{1})\times GL(V_{2})\rightarrow GL(V_{1}\otimes V_{2})$,
$GL(V)\rightarrow GL(\Sym^{r}V)$, $GL(V)\rightarrow GL(\Lambda^{r}V)$
and $GL(V)\rightarrow GL(V^{\ast})$.
\end{proof}

\subsection{Normed vector spaces\label{subsec:NormedVectorSpaces}}

\subsubsection{~}

Let $K$ be a field with a non-archimedean absolute value $\left|-\right|:K\rightarrow\mathbb{R}_{+}$
whose valuation ring $\mathcal{O}=\left\{ x\in K:\left|x\right|\leq1\right\} $
is Henselian with residue field $\ell$. A $K$\emph{-norm} on a finite
dimensional $K$-vector space $\mathcal{V}$ is a function $\alpha:\mathcal{V}\rightarrow\mathbb{R}_{+}$
such that $\alpha(v)=0\Leftrightarrow v=0$, $\alpha(v_{1}+v_{2})\leq\max\left\{ \alpha(v_{1}),\alpha(v_{2})\right\} $
and $\alpha(\lambda v)=\left|\lambda\right|\alpha(v)$ for every $v,v_{1},v_{2}\in\mathcal{V}$
and $\lambda\in K$. It is \emph{splittable} if and only if there
exists a $K$-basis $\underline{e}=(e_{1},\cdots,e_{r})$ of $\mathcal{V}$
such that $\alpha(v)=\max\left\{ \left|\lambda_{i}\right|\alpha(e_{i})\right\} $
for all $v=\sum\lambda_{i}e_{i}$ in $\mathcal{V}$; we then say that
$\alpha$ and $\underline{e}$ are \emph{adapted, }or that $\underline{e}$
is an orthogonal basis of $(\mathcal{V},\alpha)$. We denote by $\mathbf{B}(\mathcal{V})$
the set of all splittable $K$-norms on $\mathcal{V}$: it is the
extended Bruhat-Tits building of $GL(\mathcal{V})$. If $K$ is locally
compact, then every $K$-norm is splittable \cite[Proposition 1.1]{GoIw63}.
Given two splittable $K$-norms $\alpha$ and $\beta$ on $\mathcal{V}$,
there is a $K$-basis $\underline{e}$ of $\mathcal{V}$ which is
adapted to both~(\cite[Appendice]{BrTi84b} or~\cite{Pa99}), we
may furthermore assume that $\lambda_{i}=\log\alpha(e_{i})-\log\beta(e_{i})$
is non-increasing, and then \cite[6.1 \& 5.2.8]{Co14} 
\[
\mathbf{d}(\alpha,\beta)\eqd\left(\lambda_{1},\cdots,\lambda_{r}\right)\in\mathbb{R}_{\geq}^{r}\quad\mbox{and}\quad\nu(\alpha,\beta)\eqd\lambda_{1}+\cdots+\lambda_{r}\in\mathbb{R}
\]
do not depend upon the chosen adapted basis $\underline{e}$ of $\mathcal{V}$.
The functions 
\[
\mathbf{d}:\mathbf{B}(\mathcal{V})\times\mathbf{B}(\mathcal{V})\rightarrow\mathbb{R}_{\geq}^{r}\quad\mbox{and}\quad\nu:\mathbf{B}(\mathcal{V})\times\mathbf{B}(\mathcal{V})\rightarrow\mathbb{R}
\]
satisfy the following properties \cite[6.1 \& 5.2.8]{Co14}: for every
$\alpha,\beta,\gamma\in\mathbf{B}(\mathcal{V})$, 
\[
\mathbf{d}(\alpha,\gamma)\leq\mathbf{d}(\alpha,\beta)+\mathbf{d}(\beta,\gamma)\quad\mbox{and}\quad\nu(\alpha,\gamma)=\nu(\alpha,\beta)+\nu(\beta,\gamma)
\]
where the inequality is with respect to the usual dominance order
on the convex cone $\mathbb{R}_{\geq}^{r}$. A splittable $K$-norm
$\alpha$ on $\mathcal{V}$ induces a splittable $K$-norm $\alpha_{\mathcal{X}}$
on every subquotient $\mathcal{X}=\mathcal{Y}/\mathcal{Z}$ of $\mathcal{V}$,
given by the following formula: for every $x\in\mathcal{X}$, 
\[
\alpha_{\mathcal{X}}(x)\eqd\inf\left\{ \alpha(y):\mathcal{Y}\ni y\mapsto x\in\mathcal{X}\right\} =\min\left\{ \alpha(y):\mathcal{Y}\ni y\mapsto x\in\mathcal{X}\right\} .
\]
For a $K$-subspace $\mathcal{W}$ of $\mathcal{V}$ and any $\alpha,\beta\in\mathbf{B}(\mathcal{V})$,
we then have \cite[6.3.3 \& 5.2.10]{Co14}
\begin{eqnarray*}
\mathbf{d}(\alpha,\beta) & \geq & \mathbf{d}(\alpha_{\mathcal{W}},\beta_{\mathcal{W}})\ast\mathbf{d}(\alpha_{\mathcal{V}/\mathcal{W}},\beta_{\mathcal{V}/\mathcal{W}})\\
\mbox{and}\quad\nu(\alpha,\beta) & = & \nu(\alpha_{\mathcal{W}},\beta_{\mathcal{W}})+\nu(\alpha_{\mathcal{V}/\mathcal{W}},\gamma_{\mathcal{V}/\mathcal{W}})
\end{eqnarray*}
where the $\ast$-operation just re-orders the components.

\subsubsection{~}

We denote by $\Norm_{K}$ the quasi-abelian $\otimes$-category of
pairs $(\mathcal{V},\alpha)$ where $\mathcal{V}$ is a finite dimensional
$K$-vector space and $\alpha$ is a splittable $K$-norm on $\mathcal{V}$
\cite[6.4]{Co14}. A \emph{morphism} $f:(\mathcal{V}_{1},\alpha_{1})\rightarrow(\mathcal{V}_{2},\alpha_{2})$
is a $K$-linear morphism $f:\mathcal{V}_{1}\rightarrow\mathcal{V}_{2}$
such that $\alpha_{2}(f(x))\leq\alpha_{1}(x)$ for every $x\in\mathcal{V}_{1}$.
Its kernel and cokernels are given by $(\ker(f),\alpha_{1,\ker(f)})$
and $(\coker(f),\alpha_{2,\coker(f)})$. The morphism is strict if
and only if 
\[
\alpha_{2}(y)=\inf\left\{ \alpha_{1}(x):f(x)=y\right\} =\min\left\{ \alpha_{1}(x):f(x)=y\right\} 
\]
for every $y\in f(\mathcal{V}_{1})$. It is a mono-epi if and only
if $f:\mathcal{V}_{1}\rightarrow\mathcal{V}_{2}$ is an isomorphism,
in which case $\nu(f_{\ast}(\alpha_{1}),\alpha_{2})\geq0$ with equality
if and only if $f$ is an isomorphism in $\Norm_{K}$, where $f_{\ast}(\alpha_{1})$
is the splittable $K$-norm on $\mathcal{V}_{2}$ with $f_{\ast}(\alpha_{1})(f(x))=\alpha_{1}(x)$.
The tensor product of $\Norm_{K}$ is given by the formula
\[
(\mathcal{V}_{1},\alpha_{1})\otimes(\mathcal{V}_{2},\alpha_{2})\eqd(\mathcal{V}_{1}\otimes_{K}\mathcal{V}_{2},\alpha_{1}\otimes\mathcal{\alpha}_{2})
\]
where for every $v\in\mathcal{V}_{1}\otimes_{K}\mathcal{V}_{2}$,
\[
(\alpha_{1}\otimes\alpha_{2})(v)\eqd\min\left\{ \max\left\{ \alpha_{1}(v_{1,i})\alpha_{2}(v_{2,i}):i\right\} \left|\begin{array}{l}
v={\textstyle \sum_{i}}v_{1,i}\otimes v_{2,i}\\
v_{1,i}\in\mathcal{V}_{1},\,v_{2,i}\in\mathcal{V}_{2}
\end{array}\right.\right\} .
\]
This formula indeed defines a splittable $K$-norm on $\mathcal{V}_{1}\otimes\mathcal{V}_{2}$
by \cite[1.11]{BrTi84b}.

\subsubsection{~}

A \emph{lattice}\footnote{\emph{Not to be confused with the eponymous notion from section~\ref{subsec:definitions4lattices}}}
in $\mathcal{V}$ is a finitely generated $\mathcal{O}$-submodule
$L$ of $\mathcal{V}$ which spans $\mathcal{V}$ over $K$. Any such
lattice is actually finite and free over $\mathcal{O}$. The \emph{gauge
norm} of $L$ is the splittable $K$-norm $\alpha_{L}:\mathcal{V}\rightarrow\mathbb{R}_{+}$
defined by 
\[
\alpha_{L}(v)\eqd\inf\left\{ \left|\lambda\right|:v\in\lambda L\right\} .
\]
This construction defines a faithful exact $\mathcal{O}$-linear $\otimes$-functor
\[
\alpha_{-}:\Bun_{\mathcal{O}}\rightarrow\Norm_{K}
\]
where $\Bun_{\mathcal{O}}$ is the quasi-abelian $\mathcal{O}$-linear
$\otimes$-category of finite free $\mathcal{O}$-modules. A normed
$K$-vector space $(\mathcal{V},\alpha)$ belongs to the essential
image of this functor if and only if $\alpha(\mathcal{V})\subset\left|K\right|$.
This essential image is stable under strict subobjects and quotients,
and the functor is an equivalence of categories if $\left|K\right|=\mathbb{R}_{+}$. 

\subsubsection{~}

Suppose that $k$ is a subfield of $\mathcal{O}$. Thus $\left|k^{\times}\right|=1$
and $\ell$ is an extension of $k$. We denote by $\Norm_{k}^{K}$
the quasi-abelian $k$-linear $\otimes$-category of pairs $(V,\alpha)$
where $V$ is a finite dimensional $k$-vector space and $\alpha$
is a splittable $K$-norm on $V_{K}:=V\otimes_{k}K$. A \emph{morphism}
$f:(V_{1},\alpha_{1})\rightarrow(V_{2},\alpha_{2})$ is a $k$-linear
morphism $f:V_{1}\rightarrow V_{2}$ inducing a morphism $f_{K}:(V_{1,K},\alpha_{1})\rightarrow(V_{2,K},\alpha_{2})$
in $\Norm_{K}$. Its kernel and cokernel are given by the obvious
formulas, the morphism is strict if and only if $f_{K}$ is so, it
is a mono-epi if and only if $f:V_{1}\rightarrow V_{2}$ is an isomorphism,
in which case $\nu(f_{K,\ast}(\alpha_{1}),\alpha_{2})\geq0$ with
equality if and only if $f$ is an isomorphism in $\Norm_{k}^{K}$.
The tensor product in $\Norm_{k}^{K}$ is given by $(V_{1},\alpha_{1})\otimes(V_{2},\alpha_{2}):=(V_{1}\otimes V_{2},\alpha_{1}\otimes\alpha_{2})$
and the forgetful functor $\omega:\Norm_{k}^{K}\rightarrow\Vect_{k}$
is a faithful exact $k$-linear $\otimes$-functor which identifies
the poset $\Sub(V,\alpha)$ of strict subobjects of $(V,\alpha)$
in $\Norm_{k}^{K}$ with the poset $\Sub(V)$ of $k$-subspaces of
$V=\omega(V,\alpha)$. In addition, there are two exact $\otimes$-functors
\[
\Norm_{k}^{K}\rightarrow\Norm_{K},\quad(V,\alpha)\mapsto(V_{K},\alpha)\mbox{ or }(V_{K},\alpha_{V\otimes\mathcal{O}})
\]
where $V\otimes\mathcal{O}=V\otimes_{k}\mathcal{O}$ is the standard
$\mathcal{O}$-lattice in $V_{K}=V\otimes_{k}K$. We set 
\[
\rank(V,\alpha)\eqd\dim_{k}V\quad\mbox{and}\quad\deg(V,\alpha)\eqd\nu(\alpha_{V\otimes\mathcal{O}},\alpha).
\]
These functions are both plainly additive on short exact sequences
and respectively constant and non-decreasing on mono-epis. More precisely,
if $f:(V_{1},\alpha_{1})\rightarrow(V_{2},\alpha_{2})$ is a mono-epi,
then $f:V_{1}\rightarrow V_{2}$ is an isomorphism, $f_{K,\ast}(\alpha_{V_{1}\otimes\mathcal{O}})=\alpha_{V_{2}\otimes\mathcal{O}}$
and
\[
\begin{array}{rclcl}
\deg(V_{1},\alpha_{1}) & = & \nu(\alpha_{V_{1}\otimes\mathcal{O}},\alpha_{1})\\
 & = & \nu(\alpha_{V_{2}\otimes\mathcal{O}},f_{K,\ast}(\alpha_{1}))\\
 & = & \nu(\alpha_{V_{2}\otimes\mathcal{O}},\alpha_{2})-\nu(f_{K,\ast}(\alpha_{1}),\alpha_{2}) & \leq & \deg(V_{2},\alpha_{2})
\end{array}
\]
with equality if and only if $f$ is an isomorphism in $\Norm_{k}^{K}$. 

\subsubsection{~\label{subsec:setup4Norms}}

We may thus consider the following set-up
\[
\A=\Vect_{k}\quad\mbox{and}\quad\C=\Norm_{k}^{K}\quad\mbox{with}\quad\left\{ \begin{array}{rcl}
\omega(V,\alpha) & = & V,\\
\rank(V,\alpha) & = & \dim_{k}V,\\
\deg(V,\alpha) & = & \nu(\alpha_{V\otimes\mathcal{O}},\alpha),
\end{array}\right.
\]
giving rise to a HN-formalism on $\Norm_{k}^{K}$, with HN-filtration
\[
\mathcal{F}_{HN}:\Norm_{k}^{K}\rightarrow\Fil_{k}.
\]
We will show that if $\ell$ is a separable extension of $k$, then
for any reductive group $G$ over $k$, sufficiently many $\alpha$'s
in $(\Norm_{k}^{K})^{\otimes}(\omega_{G,k})$ are good for the pair
$(\Norm_{k}^{K},\deg)$ itself to be good. In particular, $\mathcal{F}_{HN}$
is then a $\otimes$-functor.

\subsubsection{~}

\emph{A variant}. Let $\Bun_{k}^{K}$ be the category of pairs $(V,L)$
where $V$ is a finite dimensional $k$-vector space and $L$ is an
$\mathcal{O}$-lattice in $V_{K}$. With the obvious morphisms and
tensor products, this is yet another quasi-abelian $k$-linear $\otimes$-category,
and the $k$-linear exact $\otimes$-functor $(V,L)\mapsto(V,\alpha_{L})$
identifies $\Bun_{k}^{K}$ with a full subcategory of $\Norm_{k}^{K}$,
made of those $(V,\alpha)$ such that $\alpha(V_{K})\subset\left|K\right|$,
which is stable under strict subobjects and quotients. The above rank
and degree functions on $\Norm_{k}^{K}$ therefore induce a HN-formalism
on $\Bun_{k}^{K}$ whose corresponding HN-filtration 
\[
\mathcal{F}_{HN}:\Bun_{k}^{K}\rightarrow\Fil_{k}
\]
is a $\otimes$-functor if $\ell$ is a separable extension of $k$.
Note that
\[
\deg(V,L)=\sum_{i=1}^{r}\log\left|\lambda_{i}\right|\quad\mbox{if}\quad V\otimes_{k}\mathcal{O}=\oplus_{i=1}^{r}\mathcal{O}e_{i}\mbox{ and }L=\oplus_{i=1}^{r}\mathcal{O}\lambda_{i}e_{i}.
\]
If $K$ is discretely valued, it is convenient to either normalize
its valuation so that $\log\left|K^{\times}\right|=\mathbb{Z}$, or
to renormalize the degree function on $\Norm_{k}^{K}$, so that its
restriction to $\Bun_{k}^{K}$ takes values in $\mathbb{Z}$. The
HN-filtration on $\Bun_{k}^{K}$ is then a $\mathbb{Q}$-filtration.

\subsubsection{~}

For a reductive group $G$ over $\mathcal{O}$, let $\mathbf{B}^{e}(G_{K})$
be the extended Bruhat-Tits building of $G_{K}$. There is a canonical
injective and functorial map \cite[Theorem 132]{Co14} 
\[
\boldsymbol{\alpha}:\mathbf{B}^{e}(G_{K})\hookrightarrow\Norm_{K}^{\otimes}(\omega_{G,K})
\]
from the building $\mathbf{B}^{e}(G_{K})$ to the set $\Norm_{K}^{\otimes}(\omega_{G,K})$
of all factorizations 
\[
\omega_{G,K}:\Rep(G)\stackrel{\alpha}{\longrightarrow}\Norm_{K}\stackrel{\omega}{\longrightarrow}\Vect_{K}
\]
of the standard fiber functor $\omega_{G,K}:\Rep(G)\rightarrow\Vect_{K}$
through an exact $\otimes$-functor 
\[
\alpha:\Rep(G)\longrightarrow\Norm_{K}.
\]
Here $\Rep(G)$ is the quasi-abelian $\otimes$-category of algebraic
representations of $G$ on finite free $\mathcal{O}$-modules. We
shall refer to $\alpha$ as a \emph{$K$-norm on} $\omega_{G,K}$. 

\subsubsection{~}

For a reductive group $G$ over $k$, we set $\mathbf{B}^{e}(G,K)=\mathbf{B}^{e}(G_{K})$.
Pre-composition with the base change functor $\Rep(G)\rightarrow\Rep(G_{\mathcal{O}})$
then yields a map 
\[
\Norm_{K}^{\otimes}(\omega_{G_{\mathcal{O}},K})\rightarrow\Norm_{K}^{\otimes}(\omega_{G,K})
\]
which is injective: a $K$-norm on $\omega_{G_{\mathcal{O}},K}$ is
uniquely determined by its values on arbitrary large finite free subrepresentations
of the representation of $G_{\mathcal{O}}$ on its ring of regular
functions $\mathcal{A}(G_{\mathcal{O}})=\mathcal{A}(G)\otimes_{k}\mathcal{O}$
\cite[6.4.17]{Co14}, and those coming from finite dimensional subrepresentations
of $\mathcal{A}(G)$ form a cofinal system. Note that
\[
\Norm_{K}^{\otimes}(\omega_{G,K})=(\Norm_{k}^{K})^{\otimes}(\omega_{G,k}).
\]
We thus obtain a canonical, functorial injective map
\[
\boldsymbol{\alpha}:\mathbf{B}^{e}(G,K)\hookrightarrow(\Norm_{k}^{K})^{\otimes}(\omega_{G,k}).
\]
We will show that if $\ell$ is a separable extension of $k$, then
any $\alpha$ in 
\[
\mathbf{B}(\omega_{G},K)=\boldsymbol{\alpha}(\mathbf{B}^{e}(G,K))\subset(\Norm_{k}^{K})^{\otimes}(\omega_{G,k})
\]
is good in the sense of section~\ref{subsec:CompatibilityTensorAxioGood}. 

\subsubsection{~}

For a reductive group $G$ over $k$, the extended Bruhat-Tits building
$\mathbf{B}^{e}(G,K)$ of $G_{K}$ is equipped with an an action of
$G(K)$, a $G(K)$-equivariant addition map 
\[
+:\mathbf{B}^{e}(G,K)\times\mathbf{F}(G,K)\twoheadrightarrow\mathbf{B}^{e}(G,K),
\]
a distinguished point $\circ$ fixed by $G(\mathcal{O})$, and the
corresponding localization map 
\[
\loc:\mathbf{B}^{e}(G,K)\twoheadrightarrow\mathbf{F}(G,\ell).
\]
For $f\in\mathbf{F}(G,k)\subset\mathbf{F}(G,K)$, $\loc(\circ+f)=f$
in $\mathbf{F}(G,k)\subset\mathbf{F}(G,\ell)$, i.e.~
\[
\xymatrix{\mathbf{F}(G,k)\ar[r]^{\circ+} & \mathbf{B}^{e}(G,K)\ar[r]^{\loc} & \mathbf{F}(G,\ell)}
\]
is the base change map $\mathbf{F}(G,k)\hookrightarrow\mathbf{F}(G,\ell)$.
For $G=GL(V)$, the composition 
\[
\mathbf{B}^{e}(G,K)\stackrel{\boldsymbol{\alpha}}{\longrightarrow}\mathbf{B}(\omega_{G},K)\stackrel{\mathrm{ev}}{\longrightarrow}\Norm_{k}^{K}(V)=\mathbf{B}(V_{K})
\]
of the isomorphism $\boldsymbol{\alpha}$ with evaluation at the tautological
representation of $G$ on $V$ is a bijection from $\mathbf{B}^{e}(G,K)$
to the set $\mathbf{B}(V_{K})$ of all splittable $K$-norms on $V_{K}$.
The distinguished point is the gauge norm of $V\otimes\mathcal{O}$,
the addition map is given by 
\[
(\alpha+\mathcal{F})(v)\eqd\min\left\{ \max\left\{ e^{-\gamma}\alpha(v_{\gamma}):\gamma\in\mathbb{R}\right\} :v=\sum v_{\gamma},\,v_{\gamma}\in\mathcal{F}^{\gamma}\right\} ,
\]
and the localization map $\loc:\mathbf{B}(V_{K})\rightarrow\mathbf{F}(V_{\ell})$
sends $\alpha$ to the $\mathbb{R}$-filtration 
\[
\loc(\alpha)^{\gamma}\eqd\frac{\left\{ v\in V\otimes\mathcal{O}:\alpha(v)\leq e^{-\gamma}\right\} +V\otimes\mathfrak{m}}{V\otimes\mathfrak{m}}\subseteq V_{\ell}=\frac{V\otimes\mathcal{O}}{V\otimes\mathfrak{m}}
\]
where $\mathfrak{m}=\left\{ \lambda\in K:\left|\lambda\right|<1\right\} $
is the maximal ideal of $\mathcal{O}$. For a general reductive group
$G$ over $k$, the corresponding addition map, distinguished point
and localization map on $\mathbf{B}(\omega_{G},K)$ are given by the
following formulas: for $\tau\in\Rep(G)$, 
\[
(\alpha+\mathcal{F})(\tau)\eqd\alpha(\tau)+\mathcal{F}(\tau),\quad\boldsymbol{\alpha}(\circ)(\tau)\eqd\alpha_{\omega_{G}(\tau)\otimes\mathcal{O}}\quad\mbox{and}\quad\loc(\alpha)(\tau)\eqd\loc(\alpha(\tau)).
\]

\begin{lem}
\label{lem:B(omega)isBig}If $\mathcal{O}$ is strictly Henselian,
then $\mathbf{B}(\omega_{G},K)$ contains the image of 
\[
\Bun_{\mathcal{O}}^{\otimes}(\omega_{G,K})\hookrightarrow\Norm_{K}^{\otimes}(\omega_{G,K}).
\]
If moreover $\left|K\right|=\mathbb{R}_{+}$, then $\boldsymbol{\alpha}:\mathbf{B}^{e}(G,K)\rightarrow\Norm_{K}^{\otimes}(\omega_{G,K})$
is a bijection.
\end{lem}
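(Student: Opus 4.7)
The plan is to translate the statement into the language of $G_{\mathcal{O}}$-torsors via Tannakian duality. An element $\beta$ of $\Bun_{\mathcal{O}}^{\otimes}(\omega_{G,K})$ is an exact $k$-linear $\otimes$-factorization of $\omega_{G,K}\colon \Rep(G) \to \Vect_{K}$ through $\Bun_{\mathcal{O}}$; after the base change $\Rep(G) \to \Rep(G_{\mathcal{O}})$, Tannakian reconstruction (e.g.\ the integral version invoked in \cite{Co14}) identifies such a $\beta$ with a pair $(P,\phi)$, where $P$ is a $G_{\mathcal{O}}$-torsor on $\Spec(\mathcal{O})$ and $\phi\colon P_{K} \simeq G_{K}$ trivializes its generic fiber. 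Concretely, $\beta(\tau) = P \times^{G_{\mathcal{O}}} (V(\tau) \otimes_{k} \mathcal{O})$ sits as an $\mathcal{O}$-lattice in $V(\tau) \otimes_{k} K$ via $\phi$, for every $\tau$ in $\Rep(G)$.

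For part (1), I would exploit strict Henselianity to trivialize $P$. The $\mathcal{O}$-scheme $P$ is smooth affine (as a torsor for the smooth affine group $G_{\mathcal{O}}$), its special fiber $P_{\ell}$ is a $G_{\ell}$-torsor over the separably closed residue field $\ell$, hence trivial since $H^{1}(\ell, G_{\ell})$ vanishes, and any $\ell$-section lifts to an $\mathcal{O}$-section by smoothness combined with Hensel's lemma. Fixing such an $\mathcal{O}$-trivialization of $P$ and comparing it with $\phi^{-1}$ produces an element $g \in G(K)$, well defined modulo the stabilizer $G(\mathcal{O})$ of the standard integral functor $\omega_{G,\mathcal{O}}$, such that $\beta(\tau) = \tau(g)\bigl(V(\tau) \otimes_{k} \mathcal{O}\bigr)$ for every $\tau$. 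Taking gauge norms shows that $\alpha_{\beta} = g \cdot \boldsymbol{\alpha}(\circ) = \boldsymbol{\alpha}(g \cdot \circ)$, where the second equality invokes the $G(K)$-equivariance of $\boldsymbol{\alpha}$ and the fact that $\circ$ is fixed by $G(\mathcal{O})$. Hence $\alpha_{\beta}$ lies in $\boldsymbol{\alpha}(\mathbf{B}^{e}(G,K)) = \mathbf{B}(\omega_{G},K)$, as required.

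For part (2), when $|K| = \mathbb{R}_{+}$ the functor $\alpha_{-}\colon \Bun_{\mathcal{O}} \to \Norm_{K}$ is an equivalence of categories, as already recorded in the excerpt, since every splittable $K$-norm is then the gauge norm of its unit ball. Post-composition therefore yields a bijection $\Bun_{\mathcal{O}}^{\otimes}(\omega_{G,K}) \simeq \Norm_{K}^{\otimes}(\omega_{G,K})$, and part (1) then places the whole of $\Norm_{K}^{\otimes}(\omega_{G,K})$ inside $\mathbf{B}(\omega_{G},K)$. Since $\boldsymbol{\alpha}$ is already known to be injective and its image is by definition $\mathbf{B}(\omega_{G},K)$, it is bijective.

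The main obstacle I anticipate is the Tannakian dictionary underlying the first paragraph. The category $\Bun_{\mathcal{O}}$ is only quasi-abelian, the base $\mathcal{O}$ need not be Noetherian, and $\Rep(G)$ is defined over $k$ rather than over $\mathcal{O}$; one must therefore verify carefully that every $\beta$ does extend uniquely through $\Rep(G) \to \Rep(G_{\mathcal{O}})$, and that the resulting $G_{\mathcal{O}}$-torsor is indeed the one whose generic fiber carries the trivialization prescribed by $\beta$. Once that dictionary is in place, the only remaining geometric input is the vanishing of $H^{1}$ of smooth affine groups over strictly Henselian local rings, and the rest is formal.
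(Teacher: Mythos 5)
Your proposal is correct and follows essentially the same route as the paper: both identify an element of $\Bun_{\mathcal{O}}^{\otimes}(\omega_{G,K})$ with a $G_{\mathcal{O}}$-torsor via Tannakian duality, trivialize it using strict Henselianity (you via lifting a special-fiber section by smoothness and Hensel's lemma, the paper via the isomorphism $H_{et}^{1}(\Spec(\mathcal{O}),G)\simeq H_{et}^{1}(\Spec(\ell),G)$ from SGA3 -- the same fact), and conclude by $G(K)$-equivariance of $\boldsymbol{\alpha}$ and the fixity of $\circ$ under $G(\mathcal{O})$. The second part, using the equivalence $\Bun_{\mathcal{O}}\simeq\Norm_{K}$ when $\left|K\right|=\mathbb{R}_{+}$, is identical to the paper's.
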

\begin{proof}
Plainly $\omega_{G,\mathcal{O}}\in\Bun_{\mathcal{O}}^{\otimes}(\omega_{G,K})$
maps to $\boldsymbol{\alpha}(\circ)\in\mathbf{B}(\omega_{G},K)$,
and since all of our maps are equivariant under $G(K)=\Aut^{\otimes}(\omega_{G,K})$,
it is sufficient to establish that $G(K)$ acts transitively on $\Bun_{\mathcal{O}}^{\otimes}(\omega_{G,K})$.
Any $L\in\Bun_{\mathcal{O}}^{\otimes}(\omega_{G,K})$ is a faithful
exact $\otimes$-functor $L:\Rep(G)\rightarrow\Bun_{\mathcal{O}}$.
The groupoid of all such functors is equivalent to the groupoid of
all $G$-bundles over $\Spec(\mathcal{O})$, and the latter are classified
by the étale cohomology group $H_{et}^{1}(\Spec(\mathcal{O}),G)$,
which is isomorphic to $H_{et}^{1}(\Spec(\ell),G)$ by~\cite[XXIV 8.1]{SGA3.3r},
which is trivial since $\ell$ is separably closed. It follows that
all $L$'s are isomorphic, i.e.~indeed conjugated under $G(K)=\Aut^{\otimes}(\omega_{G,K})$.
If also $\left|K\right|=\mathbb{R}_{+}$, then $\Bun_{\mathcal{O}}\rightarrow\Norm_{K}$
is an equivalence of categories, $\Bun_{\mathcal{O}}^{\otimes}(\omega_{G,K})\rightarrow\Norm_{K}^{\otimes}(\omega_{G,K})$
is a bijection, and thus $\mathbf{B}(\omega_{G},K)=\Norm_{K}^{\otimes}(\omega_{G,K})$. 
\end{proof}

\subsubsection{~}

The choice of a faithful representation $\tau$ of $G$ yields a distance
$d_{\tau}$ on $\mathbf{B}^{e}(G,K)$ \cite[5.2.9]{Co14}, defined
by $d_{\tau}(x,y):=\left\Vert \mathcal{F}\right\Vert _{\tau}$ if
$y=x+\mathcal{F}$ in $\mathbf{B}^{e}(G,K)$, where $\left\Vert -\right\Vert _{\tau}:\mathbf{F}(G,K)\rightarrow\mathbb{R}_{+}$
is the length function attached to $\tau$. The resulting metric space
is CAT(0) \cite[Lemma 112]{Co14}, complete when $(K,\left|-\right|)$
is discrete~\cite[Lemma 114]{Co14}, the addition map is non-expanding
in both variables \cite[5.2.8]{Co14}, the localization map is non-expanding~\cite[6.4.13 \& 5.5.9]{Co14},
and the induced topology on $\mathbf{B}^{e}(G,K)$ does not depend
upon the chosen $\tau$. These constructions are covariantly functorial
in $G$, compatible with products and embeddings, and covariantly
functorial in $(K,\left|-\right|)$. In particular, we thus obtain
a (strictly) commutative diagram of functors \xyC{4pc}
\[
\xymatrix{\Red(k)\times\HV(k)\ar[r]\sp(0,6){\mathbf{B}^{e}(-,-)} & \Top\\
\Red(G)\times\HV(k)\ar[r]\sp(0,6){\left(\mathbf{B}^{e}(-,-),d_{\tau}\right)}\ar@{^{(}->}[u] & \CAT\ar@{^{(}->}[u]
}
\]
where $\HV(k)$ is the category of Henselian valued extensions $(K,\left|-\right|)$
of $k$ and $\CAT$ is the category of CAT(0) metric spaces with distance
preserving maps. 

\subsubsection{~}

For a closed subgroup $H$ of $G$, the commutative diagram of CAT(0)-spaces\xyC{3pc}
\[
\xymatrix{\left(\mathbf{F}(H,k),d_{\tau}\right)\ar@{^{(}->}[r]\ar@{^{(}->}[d] & \left(\mathbf{F}(G,k),d_{\tau}\right)\ar@{^{(}->}[d]\\
\left(\mathbf{B}^{e}(H,K),d_{\tau}\right)\ar@{^{(}->}[r] & \left(\mathbf{B}^{e}(G,K),d_{\tau}\right)
}
\]
is cartesian: for $\mathcal{F}\in\mathbf{F}(G,k)$ such that $\circ+\mathcal{F}\in\mathbf{B}^{e}(H,K)$,
$\loc(\circ+\mathcal{F})=\mathcal{F}$ belongs to $\mathbf{F}(H,\ell)$,
thus $\mathcal{F}$ belongs to $\mathbf{F}(H,k)=\mathbf{F}(G,k)\cap\mathbf{F}(H,\ell)$.
The corresponding (a priori non-commutative) diagram of non-expanding
retractions
\[
\xymatrix{\left(\mathbf{F}(H,k),d_{\tau}\right) & \left(\mathbf{F}(G,k),d_{\tau}\right)\ar@{->>}[l]_{p_{k}}\\
\left(\mathbf{B}^{e}(H,K),d_{\tau}\right)\ar@{->>}[u]^{\varpi_{H}} & \left(\mathbf{B}^{e}(G,K),d_{\tau}\right)\ar@{->>}[u]^{\varpi_{G}}\ar@{.>>}[l]_{p_{K}}
}
\]
has a caveat: since $\left(\mathbf{B}^{e}(H,K),d_{\tau}\right)$ may
not be complete (and $\mathbf{B}^{e}(H,K)$ perhaps not even closed
in $\mathbf{B}^{e}(G,K)$), we can not directly appeal to \cite[II.2.4]{BrHa99},
but its proof shows that a non-expanding retraction $p_{K}$ is at
least well-defined on the subset 
\[
\mathbf{B}^{e}(G,K)'\eqd\left\{ x\in\mathbf{B}^{e}(G,K)\left|\begin{array}{c}
\exists y\in\mathbf{B}^{e}(H,K)\mbox{ such that }\\
d_{\tau}(x,y)=\inf\left\{ d_{\tau}(x,y'):y'\in\mathbf{B}^{e}(H,K)\right\} 
\end{array}\right.\right\} .
\]
Of course $\mathbf{B}^{e}(H,K)\subset\mathbf{B}^{e}(G,K)'$ and $\mathbf{B}^{e}(G,K)'=\mathbf{B}^{e}(G,K)$
if $\mathbf{B}^{e}(H,K)$ is complete, for instance if $H$ is a torus
or if $(K,\left|-\right|)$ is discrete \cite[5.3.2]{Co14}.
\begin{thm}
\label{thm:FunctFakeHN}If $\ell$ is a separable extension of $k$,
then 
\[
\mathbf{B}^{e}(G,K)'\mbox{ contains }\circ+\mathbf{F}(G,k).
\]
Moreover, the diagrams \xyC{3pc}
\[
\xymatrix{\mathbf{F}(H,k)\ar@{^{(}->}[d] & \mathbf{F}(G,k)\ar@{^{(}->}[d]\ar@{->>}[l]_{p_{k}}\\
\mathbf{B}^{e}(H,K) & \mathbf{B}^{e}(G,K)'\ar@{->>}[l]_{p_{K}}
}
\quad\mbox{and}\quad\xymatrix{\mathbf{F}(H,k)\ar@{^{(}->}[r] & \mathbf{F}(G,k)\\
\mathbf{B}^{e}(H,K)\ar@{^{(}->}[r]\ar@{->>}[u]^{\varpi_{H}} & \mathbf{B}^{e}(G,K)\ar@{->>}[u]^{\varpi_{G}}
}
\]
are commutative, $\varpi_{G}$ does not depend upon $\tau$ and defines
a retraction
\[
\varpi:\mathbf{B}^{e}(-,K)\twoheadrightarrow\mathbf{F}(-,k)
\]
of the embedding $\mathbf{F}(-,k)\hookrightarrow\mathbf{B}^{e}(-,K)$
of functors from $\Red(k)$ to $\Top$. 
\end{thm}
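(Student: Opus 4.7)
My plan is to adapt the architecture of the proof of Theorem~\ref{thm:FunctFil}, transferring its conclusions from the vectorial setting $\mathbf{F}(-,\ell)$ up to the affine building $\mathbf{B}^{e}(-,K)$ through the non-expanding localization $\loc:\mathbf{B}^{e}(G,K)\twoheadrightarrow\mathbf{F}(G,\ell)$ and the embedding $\mathcal{F}\mapsto\circ+\mathcal{F}$ of $\mathbf{F}(G,k)$ into $\mathbf{B}^{e}(G,K)$. The separability hypothesis on $\ell/k$ enters only through the invocation of Theorem~\ref{thm:FunctFil}, which supplies the equality $p_{\ell}=p_{k}$ on $\mathbf{F}(G,k)$. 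Observe that the embedding $\mathcal{F}\mapsto\circ+\mathcal{F}$ is automatically isometric: non-expansion of the addition map gives $d_{\tau}(\circ+\mathcal{F}_{1},\circ+\mathcal{F}_{2})\leq d_{\tau}(\mathcal{F}_{1},\mathcal{F}_{2})$, while non-expansion of $\loc$ combined with $\loc(\circ+\mathcal{F})=\mathcal{F}$ yields the reverse inequality.

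First I will prove simultaneously the containment $\circ+\mathbf{F}(G,k)\subset\mathbf{B}^{e}(G,K)'$ and the commutativity of the first diagram. Given $\mathcal{F}\in\mathbf{F}(G,k)$, set $\mathcal{G}=p_{k}(\mathcal{F})\in\mathbf{F}(H,k)$, so that $\circ+\mathcal{G}\in\mathbf{B}^{e}(H,K)$. For any $y'\in\mathbf{B}^{e}(H,K)$, one has $\loc(y')\in\mathbf{F}(H,\ell)$ and
\[
d_{\tau}(\circ+\mathcal{F},y')\geq d_{\tau}(\mathcal{F},\loc(y'))\geq d_{\tau}(\mathcal{F},p_{\ell}(\mathcal{F}))=d_{\tau}(\mathcal{F},\mathcal{G})=d_{\tau}(\circ+\mathcal{F},\circ+\mathcal{G}),
\]
by non-expansion of $\loc$, minimality of $p_{\ell}(\mathcal{F})$, Theorem~\ref{thm:FunctFil} (which gives $p_{\ell}(\mathcal{F})=p_{k}(\mathcal{F})=\mathcal{G}$), and the isometry above. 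Hence $\circ+\mathcal{G}$ realizes the distance from $\circ+\mathcal{F}$ to $\mathbf{B}^{e}(H,K)$, proving both claims and identifying $p_{K}(\circ+\mathcal{F})=\circ+p_{k}(\mathcal{F})$.

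For the second diagram, I fix $x\in\mathbf{B}^{e}(H,K)$ and set $y=\varpi_{G}(x)\in\mathbf{F}(G,k)$, i.e.\ $\circ+y$ is the unique closest point to $x$ in $\circ+\mathbf{F}(G,k)$. By the first diagram just proven, $p_{K}(\circ+y)=\circ+p_{k}(y)\in\mathbf{B}^{e}(H,K)$; since $p_{K}$ is non-expanding and $p_{K}(x)=x$,
\[
d_{\tau}(x,\circ+p_{k}(y))=d_{\tau}(p_{K}(x),p_{K}(\circ+y))\leq d_{\tau}(x,\circ+y).
\]
As $\circ+p_{k}(y)\in\circ+\mathbf{F}(H,k)\subset\circ+\mathbf{F}(G,k)$, the uniqueness of the closest point in $\circ+\mathbf{F}(G,k)$ forces $p_{k}(y)=y$, i.e.\ $y\in\mathbf{F}(H,k)$, hence $y=\varpi_{H}(x)$.

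The independence of $\varpi_{G}$ from $\tau$ and its functoriality in $G$ then follow verbatim from the last paragraph of the proof of Theorem~\ref{thm:FunctFil}: the product decomposition $\mathbf{B}^{e}(G_{1}\times G_{2},K)=\mathbf{B}^{e}(G_{1},K)\times\mathbf{B}^{e}(G_{2},K)$ with Pythagorean distances, combined with the commutativity of the second diagram, is applied to the diagonal $\Delta:G\hookrightarrow G\times G$ and to graph embeddings $\Delta_{f}:G_{1}\hookrightarrow G_{1}\times G_{2}$. The principal obstacle, compared with Theorem~\ref{thm:FunctFil}, is that $\mathbf{B}^{e}(G,K)$ is typically not acted on by a rich-enough group of $k$-automorphisms of $K$, so a direct Galois descent is unavailable; the plan sidesteps this by routing through $\loc$ and deferring the Galois-theoretic content entirely to Theorem~\ref{thm:FunctFil}.
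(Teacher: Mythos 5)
Your proposal is correct and follows essentially the same route as the paper: the first claim and first diagram via the chain $d_\tau(\circ+\mathcal{F},y)\geq d_\tau(\mathcal{F},\loc(y))\geq d_\tau(\mathcal{F},p_\ell(\mathcal{F}))=d_\tau(\mathcal{F},p_k(\mathcal{F}))$ (deferring the Galois content to Theorem~\ref{thm:FunctFil}), the second diagram from the first together with non-expansion of $p_K$, and $\tau$-independence and functoriality by the same product/diagonal argument used for Theorem~\ref{thm:FunctFil}. The only difference is that you make explicit the short verification that $\mathcal{F}\mapsto\circ+\mathcal{F}$ is isometric on $\mathbf{F}(G,k)$, which the paper takes for granted.
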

\begin{proof}
This is again essentially formal.

\emph{First claim and commutativity of the first diagram. }For $\mathcal{F}\in\mathbf{F}(G,k)$
and any element $y\in\mathbf{B}^{e}(H,K)$, 
\[
d_{\tau}(\circ+\mathcal{F},y)\geq d_{\tau}\left(\mathcal{F},\loc(y)\right)\geq d_{\tau}\left(\mathcal{F},p_{\ell}(\mathcal{F})\right)=d_{\tau}\left(\mathcal{F},p_{k}(\mathcal{F})\right)
\]
since $\loc$ is non-expanding and $p_{\ell}=p_{k}$ on $\mathbf{F}(G,k)$
by theorem~\ref{thm:FunctFil}, therefore 
\[
d_{\tau}\left(\mathcal{F},p_{k}(\mathcal{F})\right)=d_{\tau}\left(\circ+\mathcal{F},\circ+p_{k}(\mathcal{F})\right)=\inf\left\{ d_{\tau}(\circ+\mathcal{F},y):y\in\mathbf{B}^{e}(H,K)\right\} .
\]
This says that $\circ+\mathcal{F}\in\mathbf{B}^{e}(G,K)'$ with $p_{K}(\circ+\mathcal{F})=\circ+p_{k}(\mathcal{F})$. 

\emph{Commutativity of the second diagram. }For $x\in\mathbf{B}^{e}(H,K)$
and $\mathcal{F}:=\varpi_{G}(x)$ in $\mathbf{F}(G,k)$, $x$ and
$\circ+\mathcal{F}$ belong to $\mathbf{B}^{e}(G,K)'$, moreover 
\[
d_{\tau}(x,\circ+\mathcal{F})\geq d_{\tau}\left(p_{K}(x),p_{K}(\circ+\mathcal{F})\right)=d_{\tau}\left(x,\circ+p_{k}(\mathcal{F})\right)
\]
by commutativity of the first diagram, thus $\mathcal{F}=p_{k}(\mathcal{F})$
by definition of $\mathcal{F}=\omega_{G}(x)$, in particular $\mathcal{F}$
belongs to $\mathbf{F}(H,k)$, from which easily follows that also
$\mathcal{F}=\varpi_{H}(x)$.

\emph{Independence of $\tau$ and functoriality. }Let $G_{1}$ and
$G_{2}$ be reductive groups over $k$ with faithful representations
$\tau_{1}$ and $\tau_{2}$. Set $\tau_{3}:=\tau_{1}\boxplus\tau_{2}$,
a faithful representation of $G_{3}:=G_{1}\times G_{2}$. Then 
\[
\left(\mathbf{B}^{e}(G_{3},K),d_{\tau_{3}}\right)=\left(\mathbf{B}^{e}(G_{1},K),d_{\tau_{1}}\right)\times\left(\mathbf{B}^{e}(G_{2},K),d_{\tau_{2}}\right)
\]
in $\CAT$. This actually means that for $x_{3}=(x_{1},x_{2})$ and
$y_{3}=(y_{1},y_{2})$ in 
\[
\mathbf{B}^{e}(G_{3},K)=\mathbf{B}^{e}(G_{1},K)\times\mathbf{B}^{e}(G_{2},K)
\]
we have the usual Pythagorean formula 
\[
d_{\tau_{3}}(x_{3},y_{3})=\sqrt{d_{\tau_{1}}(x_{1},y_{1})^{2}+d_{\tau_{2}}(x_{2},y_{2})^{2}}.
\]
It immediately follows that {\small{}
\[
\left(\mathbf{B}^{e}(G_{3},K)\stackrel{\varpi_{3}}{\twoheadrightarrow}\mathbf{F}(G_{3},k)\right)=\left(\mathbf{B}^{e}(G_{1},K)\times\mathbf{B}^{e}(G_{2},K)\stackrel{(\varpi_{1},\varpi_{2})}{\twoheadrightarrow}\mathbf{F}(G_{1},k)\times\mathbf{F}(G_{2},k)\right)
\]
}where $\varpi_{i}:=\varpi_{G_{i}}$ is the retraction attached to
$\tau_{i}$. Applying this to $G_{1}=G_{2}=G$ and using the commutativity
of our second diagram for the diagonal embedding $\Delta:G\hookrightarrow G\times G$,
we obtain $\Delta\circ\varpi_{3}=(\varpi_{1},\varpi_{2})\circ\Delta$,
where $\varpi_{3}$ is now the retraction $\varpi_{G}$ attached to
the faithful representation $\tau_{1}\oplus\tau_{2}=\Delta^{\ast}(\tau_{3})$
of $G$. Thus $\varpi_{1}=\varpi_{3}=\varpi_{2}$, i.e.~$\varpi_{G}$
does not depend upon the choice of $\tau$. Using the commutativity
of our second diagram for the graph embedding $\Delta_{f}:G_{1}\hookrightarrow G_{1}\times G_{2}$
of a morphism $f:G_{1}\rightarrow G_{2}$, we similarly obtain the
functoriality of $G\mapsto\varpi_{G}$. 
\end{proof}

\subsubsection{~}

With notations as above, the \emph{Busemann scalar product} is the
function 
\[
\left\langle -,-\right\rangle _{\tau}:\mathbf{B}^{e}(G,K)^{2}\times\mathbf{F}(G,K)\rightarrow\mathbb{R}
\]
which maps $(x,y,\mathcal{F})$ to 
\[
\left\langle \overrightarrow{xy},\mathcal{F}\right\rangle _{\tau}\eqd\left\Vert \mathcal{F}\right\Vert _{\tau}\cdot\lim_{t\rightarrow\infty}\left(d_{\tau}(x,z+t\mathcal{F})-d_{\tau}(y,z+t\mathcal{F})\right).
\]
Here $z$ is any fixed point in $\mathbf{B}^{e}(G,K)$: the limit
exists and does not depend upon the chosen $z$~\cite[5.5.8]{Co14}.
For every $x,y,z\in\mathbf{B}^{e}(G,K)$, $\mathcal{F}\in\mathbf{F}(G,K)$
and $t\geq0$, 
\[
\left\langle \overrightarrow{xz},\mathcal{F}\right\rangle _{\tau}=\left\langle \overrightarrow{xy},\mathcal{F}\right\rangle _{\tau}+\left\langle \overrightarrow{yz},\mathcal{F}\right\rangle _{\tau}\quad\mbox{and}\quad\left\langle \overrightarrow{xy},t\mathcal{F}\right\rangle _{\tau}=t\left\langle \overrightarrow{xy},\mathcal{F}\right\rangle _{\tau}.
\]
As a function of $x$, $\left\langle \overrightarrow{xy},\mathcal{F}\right\rangle _{\tau}$
is convex and $\left\Vert \mathcal{F}\right\Vert _{\tau}$-Lipschitzian;
as a function of $y$, it is concave and $\left\Vert \mathcal{F}\right\Vert _{\tau}$-Lipschitzian;
as a function of $\mathcal{F}$, it is usually neither convex nor
concave, but it is $d_{\tau}(x,y)$-Lipschitzian~\cite[5.5.11]{Co14};
as a function of $\tau$, it is additive: if $\tau'$ is another faithful
representation of $G$, then 
\[
\left\langle \overrightarrow{xy},\mathcal{F}\right\rangle _{\tau\oplus\tau'}=\left\langle \overrightarrow{xy},\mathcal{F}\right\rangle _{\tau}+\left\langle \overrightarrow{xy},\mathcal{F}\right\rangle _{\tau'}.
\]
For any $x\in\mathbf{B}^{e}(G,K)$ and $\mathcal{F}\in\mathbf{F}(G,k)$,
we have the following inequality \cite[5.5.9]{Co14}:
\[
\left\langle \overrightarrow{\circ x},\mathcal{F}\right\rangle _{\tau}\leq\left\langle \loc(x),\mathcal{F}\right\rangle _{\tau}.
\]
This is an equality when $x$ belongs to $\mathbf{F}(G,k)\simeq\circ+\mathbf{F}(G,k)$. 
\begin{prop}
\label{prop:CompBusemProj}Suppose that $\ell$ is a separable extension
of $k$. Let $H$ be a reductive subgroup of $G$. Then for every
$x\in\mathbf{B}^{e}(H,K)$ and $\mathcal{F}\in\mathbf{F}(G,k)$, 
\[
\left\langle \overrightarrow{\circ x},\mathcal{F}\right\rangle _{\tau}\leq\left\langle \overrightarrow{\circ x},p_{k}(\mathcal{F})\right\rangle _{\tau}
\]
where $p_{k}:\mathbf{F}(G,k)\twoheadrightarrow\mathbf{F}(H,k)$ is
the convex projection attached to $d_{\tau}$.
\end{prop}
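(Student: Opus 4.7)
The plan is to factor the desired inequality through the chain
\[
\left\langle \overrightarrow{\circ x}, \mathcal{F}\right\rangle_\tau
\;\stackrel{(1)}{\leq}\; \left\langle \loc(x), \mathcal{F}\right\rangle_\tau
\;\stackrel{(2)}{\leq}\; \left\langle \loc(x), p_k(\mathcal{F})\right\rangle_\tau
\;\stackrel{(3)}{\leq}\; \left\langle \overrightarrow{\circ x}, p_k(\mathcal{F})\right\rangle_\tau,
\]
noting that $\loc(x)\in\mathbf{F}(H,\ell)$ since $x\in\mathbf{B}^{e}(H,K)$.

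Step~(1) is an instance of the general Busemann-vs-localization bound $\langle\overrightarrow{\circ y},\mathcal{G}\rangle_\tau\leq\langle\loc(y),\mathcal{G}\rangle_\tau$ recalled just above the statement (from \cite[5.5.9]{Co14}), applied to $y=x$ and $\mathcal{G}=\mathcal{F}$. Step~(2) is the final assertion of theorem~\ref{thm:FunctFil} applied to $(\loc(x),\mathcal{F})\in\mathbf{F}(H,\ell)\times\mathbf{F}(G,k)$; this is the only place where the separability of $\ell/k$ intervenes in the argument.

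The crux is step~(3). Setting $\mathcal{F}':=p_k(\mathcal{F})\in\mathbf{F}(H,k)$, one must prove $\langle\loc(x),\mathcal{F}'\rangle_\tau\leq\langle\overrightarrow{\circ x},\mathcal{F}'\rangle_\tau$. Since step~(1) reapplied with $\mathcal{G}=\mathcal{F}'$ yields the reverse inequality, step~(3) is in fact the \emph{equality} asserting that the Busemann pairing and the localization pairing coincide whenever the direction lies in the distinguished subspace $\mathbf{F}(H,k)\subset\mathbf{F}(G,k)$. To establish it, I would fix a large $s>0$ and pick an apartment $A$ of $\mathbf{B}^{e}(H,K)$ containing $\circ$ and the whole ray $s\mapsto\circ+s\mathcal{F}'$ (which is possible because $\mathcal{F}'$ is $k$-rational, so the ray lies in $\circ+\mathbf{F}(H,k)$), then expand the limit defining the Busemann product inside $A$, where $d_\tau$ is Euclidean. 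The $k$-rationality of $\mathcal{F}'$ forces the only contribution to $d_\tau(\circ,\circ+s\mathcal{F}')-d_\tau(x,\circ+s\mathcal{F}')$ which survives as $s\to\infty$ to depend on $x$ only through its reduction modulo $\mathfrak{m}$, that is, through $\loc(x)$, whence the equality.

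The main obstacle is step~(3): the general inequality of \cite[5.5.9]{Co14} is not sharp, and extracting sharpness under the rationality hypothesis on the direction requires an explicit apartment-level analysis --- exactly the content of \cite[lemma~3.6.6]{LeWE16} singled out in the acknowledgments. Once step~(3) is secured, steps~(1) and~(2) are formal invocations of tools already in hand, and concatenating the three inequalities proves the proposition.
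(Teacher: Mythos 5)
Steps (1) and (2) of your chain are fine, but step (3) is false, and with it the whole decomposition collapses. Step (3) asserts $\left\langle \loc(x),\mathcal{G}\right\rangle _{\tau}\leq\left\langle \overrightarrow{\circ x},\mathcal{G}\right\rangle _{\tau}$ for every $x\in\mathbf{B}^{e}(H,K)$ and every $\mathcal{G}=p_{k}(\mathcal{F})\in\mathbf{F}(H,k)$; since $p_{k}$ is a retraction onto $\mathbf{F}(H,k)$, this would have to hold for all $\mathcal{G}\in\mathbf{F}(H,k)$, and combined with step (1) it would force the inequality of \cite[5.5.9]{Co14} to be an \emph{equality} whenever the direction is $k$-rational. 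That is not true: the stated equality case requires $x\in\circ+\mathbf{F}(G,k)$, a condition on $x$, not on the direction. Concretely, take $H=G=GL_{2}$, $K=k((t))$, let $\alpha$ be the gauge norm of the lattice $\mathcal{O}(e_{1}+te_{2})\oplus t^{10}\mathcal{O}e_{2}$ and let $f\in\mathbf{F}(V)$ put $ke_{1}$ in weight $1$ and $V/ke_{1}$ in weight $0$. Then $\alpha(e_{1})=q^{9}$, so $\left\langle \overrightarrow{\circ\alpha},f\right\rangle =\deg\left(ke_{1},\alpha_{ke_{1}}\right)=-9\log q$, whereas $e_{1}\equiv e_{1}+te_{2}$ modulo $V\otimes\mathfrak{m}$ with $\alpha(e_{1}+te_{2})=1$, so $\bar{e}_{1}$ sits in weight $0$ of $\loc(\alpha)$ and $\left\langle \loc(\alpha),f\right\rangle =0>-9\log q$. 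Your heuristic that the Busemann limit "depends on $x$ only through its reduction" is exactly what this example refutes: the limit depends on the position of $x$ relative to the parabolic $P_{\mathcal{G}}$, not just on $\loc(x)$.

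This is why the paper's proof cannot avoid the parabolic/Levi machinery. It first retracts $x$ and $\mathcal{F}$ through the unipotent radicals attached to $\mathcal{G}$ and an opposed $\mathcal{G}'$, obtaining $x'=r(x)$ in the building of the Levi $H'$ and $\mathcal{F}'=r(\mathcal{F})\in\mathbf{F}(G',k)$; the non-expansiveness of $r$ gives $\left\langle \overrightarrow{\circ x},\mathcal{F}\right\rangle _{\tau}\leq\left\langle \overrightarrow{\circ x'},\mathcal{F}'\right\rangle _{\tau}$, then \cite[5.5.9]{Co14} and theorem~\ref{thm:FunctFil} are applied to the \emph{retracted} point, and the passage back from $\left\langle \loc(x'),\mathcal{G}\right\rangle _{\tau}$ to $\left\langle \overrightarrow{\circ x},\mathcal{G}\right\rangle _{\tau}$ is legitimate only because $x'$ lies in $\mathbf{B}^{e}(H',K)$ (where Busemann and localization pairings in direction $\mathcal{G}$ agree by \cite[5.5.3]{Co14}) and because the unipotent element carrying $x$ to $x'$ fixes $\circ+t\mathcal{G}$ for $t\gg0$. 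The step singled out in the acknowledgments (the analogue of \cite[Lemma 3.6.6]{LeWE16}) is not your step (3) but the identity $p'_{k}(\mathcal{F}')=\mathcal{G}$ for the projection onto $\mathbf{F}(H',k)$, proved via the translations $\mathcal{H}\mapsto\mathcal{H}+t\mathcal{G}$, $\mathcal{H}\mapsto\mathcal{H}+t\mathcal{G}'$. None of this structure is present in your sketch, so the argument as written does not go through.
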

\begin{proof}
Set $\mathcal{G}=p_{k}(\mathcal{F})\in\mathbf{F}(H,k)$ and pick a
splitting of $\mathcal{G}$~\cite[Cor. 63]{Co14}, corresponding
to an $\mathbb{R}$-filtration $\mathcal{G}'\in\mathbf{F}(H,k)$ opposed
to $\mathcal{G}$: for any representation $\sigma$ of $H$, 
\[
\omega_{H,k}(\sigma)=\oplus_{\gamma\in\mathbb{R}}\mathcal{G}(\sigma)^{\gamma}\cap\mathcal{G}'(\sigma)^{-\gamma}.
\]
Let $Q_{\mathcal{G}}\subset P_{\mathcal{G}}$ and $Q_{\mathcal{G}'}\subset P_{\mathcal{G}'}$
be the stabilizers of $\mathcal{G}$ and $\mathcal{G}'$ in $H$ and
$G$, so that $(Q_{\mathcal{G}},Q_{\mathcal{G}'})$ and $(P_{\mathcal{G}},P_{\mathcal{G}'})$
are pairs of opposed parabolic subgroups of $H$ and $G$, with Levi
subgroups $H':=Q_{\mathcal{G}}\cap Q_{\mathcal{G}'}$ and $G':=P_{\mathcal{G}}\cap P_{\mathcal{G}'}$.
Let $R^{u}(-)$ denote the unipotent radical. Then for $\star\in\{k,\ell,K\}$,
$\mathbf{B}^{e}(H',K)$, $\mathbf{B}^{e}(G',K)$, $\mathbf{F}(H',\star)$
and $\mathbf{F}(G',\star)$ are fundamental domains for the actions
of $R^{u}Q_{\mathcal{G}}(K)$, $R^{u}P_{\mathcal{G}}(K)$, $R^{u}Q_{\mathcal{G}}(\star)$
and $R^{u}P_{\mathcal{G}}(\star)$ on respectively $\mathbf{B}^{e}(H,K)$,
$\mathbf{B}^{e}(G,K)$, $\mathbf{F}(H,\star)$ and $\mathbf{F}(G,\star)$
\cite[5.2.10]{Co14}. We denote by the same letter $r$ the corresponding
retractions. They are all non-expanding, and the following diagrams
are commutative:
\[
\xymatrix{\mathbf{B}^{e}(H,K)\ar@{^{(}->}[r]\ar@{->>}[d]_{r} & \mathbf{B}^{e}(G,K)\ar@{->>}[d]^{r} & \mathbf{F}(H,\star)\ar@{^{(}->}[r]\ar@{->>}[d]_{r} & \mathbf{F}(G,\star)\ar@{->>}[d]^{r}\\
\mathbf{B}^{e}(H',K)\ar@{^{(}->}[r] & \mathbf{B}^{e}(G',K) & \mathbf{F}(H',\star)\ar@{^{(}->}[r] & \mathbf{F}(G',\star)
}
\]
Let $x':=r(x)$ and $\mathcal{F}':=r(\mathcal{F})$, so that $x'\in\mathbf{B}^{e}(H',K)$,
$\mathcal{F}'\in\mathbf{F}(G',k)$. Note that already $\mathcal{G},\mathcal{G}'\in\mathbf{F}(H',k)$.
We will establish the following inequalities:
\[
\left\langle \overrightarrow{\circ x},\mathcal{F}\right\rangle _{\tau}\stackrel{(1)}{\leq}\left\langle \overrightarrow{\circ x}',\mathcal{F}'\right\rangle _{\tau}\stackrel{(2)}{\leq}\left\langle \loc(x'),\mathcal{F}'\right\rangle _{\tau}\stackrel{(3)}{\leq}\left\langle \loc(x'),\mathcal{G}\right\rangle _{\tau}\stackrel{(4)}{=}\left\langle \overrightarrow{\circ x}',\mathcal{G}\right\rangle _{\tau}\stackrel{(5)}{=}\left\langle \overrightarrow{\circ x},\mathcal{G}\right\rangle _{\tau}.
\]
The second inequality was already mentioned just before the proposition.

\emph{Proof of $(1)$}. Since $\mathcal{F}'=r(\mathcal{F})$, there
is a $u\in R^{u}P_{\mathcal{G}}(k)$ such that $\mathcal{F}'=u\mathcal{F}$.
Since $u\in G(k)$ and all of our distances, norms etc... are $G(k)$-invariant,
it follows that $\left\Vert \mathcal{F}\right\Vert _{\tau}=\left\Vert \mathcal{F}'\right\Vert _{\tau}$.
Since $u\in G(\mathcal{O})$ fixes $\circ$, $u(\circ+t\mathcal{F})=\circ+t\mathcal{F}'$
belongs to $\mathbf{B}^{e}(G',K)$. Since $u\in R^{u}P_{\mathcal{G}}(K)$,
$r(\circ+t\mathcal{F})=\circ+t\mathcal{F}'$ for all $t\geq0$. Thus
\begin{eqnarray*}
\left\langle \overrightarrow{\circ x},\mathcal{F}\right\rangle _{\tau} & = & \left\Vert \mathcal{F}\right\Vert _{\tau}\lim_{t\rightarrow\infty}\left(t\left\Vert \mathcal{F}\right\Vert _{\tau}-d_{\tau}(x,\circ+t\mathcal{F})\right)\\
 & \leq & \left\Vert \mathcal{F}'\right\Vert _{\tau}\lim_{t\rightarrow\infty}\left(t\left\Vert \mathcal{F}'\right\Vert _{\tau}-d_{\tau}(x',\circ+t\mathcal{F}')\right)\\
 & = & \left\langle \overrightarrow{\circ x}',\mathcal{F}'\right\rangle _{\tau}
\end{eqnarray*}
since $r:\mathbf{B}^{e}(G,K)\twoheadrightarrow\mathbf{B}^{e}(G',K)$
is non-expanding.

\emph{Proof of }$(3)$. Note that $\loc(x')\in\mathbf{F}(H',\ell)$
and $\mathcal{F}'\in\mathbf{F}(G',k)$. By the last assertion of theorem~\ref{thm:FunctFil},
it is sufficient to establish that $p'_{k}(\mathcal{F}')=\mathcal{G}$
for the convex projection $p'_{k}:\mathbf{F}(G',k)\twoheadrightarrow\mathbf{F}(H',k)$
\textendash{} which is usually not equal to the restriction of $p_{k}:\mathbf{F}(G,k)\twoheadrightarrow\mathbf{F}(H,k)$
to $\mathbf{F}(G',k)$. For $t\gg0$, $\mathcal{F}+t\mathcal{G}=\mathcal{F}'+t\mathcal{G}$
by \cite[5.6.2]{Co14}. In particular $\mathcal{F}+t\mathcal{G}$
belongs to $\mathbf{F}(G',k)$ since $\mathcal{F}'$ and $\mathcal{G}$
do. On the other hand, 
\[
p_{k}(\mathcal{F}+t\mathcal{G})=(1+t)p_{k}\left({\textstyle \frac{1}{1+t}}\mathcal{F}+{\textstyle \frac{t}{1+t}}\mathcal{G}\right)=(1+t)\mathcal{G}
\]
using~\cite[II.2.4]{BrHa99} for the second equality. Since this
belongs to $\mathbf{F}(H',k)$, actually 
\[
p'_{k}(\mathcal{F}'+t\mathcal{G})=p'_{k}(\mathcal{F}+t\mathcal{G})=p_{k}(\mathcal{F}+t\mathcal{G})=(1+t)\mathcal{G}.
\]
Now observe that $\mathcal{H}\mapsto\mathcal{H}+t\mathcal{G}$ and
$\mathcal{H}\mapsto\mathcal{H}+t\mathcal{G}'$ are mutually inverse
isometries of $\mathbf{F}(G',k)$ and $\mathbf{F}(H',k)$, thus $p'_{k}$
commutes with both of them and 
\[
p'_{k}(\mathcal{F}')=p'_{k}(\mathcal{F}'+t\mathcal{G})+t\mathcal{G}'=(1+t)\mathcal{G}+t\mathcal{G}'=\mathcal{G}.
\]

\emph{Proof of $(4)$. }This follows from~\cite[5.5.3]{Co14}.

\emph{Proof of $(5)$. }Since $x'=r(x)$, there is a $u\in R^{u}Q_{\mathcal{G}}(K)$
such that $ux=x'$. For $t\gg0$, $u$ fixes $\circ+t\mathcal{G}$
by \cite[5.4.6]{Co14}. Then $d_{\tau}\left(x',\circ+t\mathcal{G}\right)=d_{\tau}\left(x,\circ+t\mathcal{G}\right)$
and
\begin{eqnarray*}
\left\langle \overrightarrow{\circ x}',\mathcal{G}\right\rangle _{\tau} & = & \left\Vert \mathcal{G}\right\Vert _{\tau}\lim_{t\rightarrow\infty}\left(t\left\Vert \mathcal{G}\right\Vert _{\tau}-d_{\tau}(x',\circ+t\mathcal{G})\right)\\
 & = & \left\Vert \mathcal{G}\right\Vert _{\tau}\lim_{t\rightarrow\infty}\left(t\left\Vert \mathcal{G}\right\Vert _{\tau}-d_{\tau}(x,\circ+t\mathcal{G})\right)\\
 & = & \left\langle \overrightarrow{\circ x},\mathcal{G}\right\rangle _{\tau}.
\end{eqnarray*}
This finishes the proof of the proposition.
\end{proof}
\begin{cor}
\label{cor:ConcaveBusemann}For every $x\in\mathbf{B}^{e}(G,K)$,
$\mathcal{F}\mapsto\left\langle \overrightarrow{\circ x},\mathcal{F}\right\rangle _{\tau}$
is concave on $\mathbf{F}(G,k)$.
\end{cor}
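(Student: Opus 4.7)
Proof plan. Fix $x\in\mathbf{B}^{e}(G,K)$ and $\mathcal{F}_{0},\mathcal{F}_{1}\in\mathbf{F}(G,k)$; for $s\in[0,1]$ put $\mathcal{F}_{s}=(1-s)\mathcal{F}_{0}+s\mathcal{F}_{1}\in\mathbf{F}(G,k)$. The goal is
\[
\langle\overrightarrow{\circ x},\mathcal{F}_{s}\rangle_{\tau}\geq(1-s)\langle\overrightarrow{\circ x},\mathcal{F}_{0}\rangle_{\tau}+s\langle\overrightarrow{\circ x},\mathcal{F}_{1}\rangle_{\tau}.
\]
My plan is to re-run the retraction argument of Proposition~\ref{prop:CompBusemProj}, and in particular its equality step~$(5)$, so as to replace $x\in\mathbf{B}^{e}(G,K)$ by a retracted point $x'\in\mathbf{B}^{e}(L,K)$ in the Bruhat-Tits building of a Levi $k$-subgroup $L$ in whose centre the entire segment $\{\mathcal{F}_{s}\}_{s\in[0,1]}$ lies. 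On such a central cocharacter cone the Busemann scalar product reduces to an ordinary linear functional of~$\mathcal{F}_{s}$, so affinity (a fortiori, concavity) is automatic.

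First I would choose a maximal $k$-split torus $S\subset G$ whose vectorial apartment $\mathbf{F}(S,k)\simeq X_{*}(S)_{\mathbb{R}}$ in $\mathbf{F}(G,k)$ contains both $\mathcal{F}_{0}$ and $\mathcal{F}_{1}$. Set $L=Z_{G}(S)$, a Levi $k$-subgroup with $S\subset Z(L)^{\circ}$; then each $\mathcal{F}_{s}$ lies in $X_{*}(Z(L)^{\circ})_{\mathbb{R}}\subset\mathbf{F}(L,k)$, i.e.~is a central cocharacter of~$L$. Pick a parabolic $k$-subgroup $P$ of $G$ with Levi $L$ and let $r:\mathbf{B}^{e}(G,K)\twoheadrightarrow\mathbf{B}^{e}(L,K)$ be the attached non-expanding retraction from the proof of Proposition~\ref{prop:CompBusemProj}, so that $x'=r(x)=ux$ for some $u\in R^{u}P(K)$. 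Because $\mathcal{F}_{s}$ is central in $L$, the parabolic $P_{\mathcal{F}_{s}}\subset G$ contains $P$, hence $u\in R^{u}P(K)\subset R^{u}P_{\mathcal{F}_{s}}(K)$ fixes $\circ+t\mathcal{F}_{s}$ for $t\gg0$ by~\cite[5.4.6]{Co14}. Running the computation that gives equality~$(5)$ of Proposition~\ref{prop:CompBusemProj} uniformly in $s\in[0,1]$ then yields
\[
\langle\overrightarrow{\circ x},\mathcal{F}_{s}\rangle_{\tau}=\langle\overrightarrow{\circ x'},\mathcal{F}_{s}\rangle_{\tau}.
\]

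The second step is to evaluate the right hand side. The extended Bruhat-Tits building of $L$ decomposes isometrically as a CAT(0)-product $\mathbf{B}^{e}(L,K)\simeq\mathbf{B}(L^{\mathrm{der}},K)\times X_{*}(Z(L)^{\circ})_{\mathbb{R}}$, and the addition by a central $\mathbb{R}$-filtration acts by translation on the second factor. Writing $x'=(x_{d}',x_{c}')$ and $\circ=(\circ_{d},\circ_{c})$, the expansion
\[
d_{\tau}(x',\circ+t\mathcal{F}_{s})=\sqrt{d_{\tau}(x_{d}',\circ_{d})^{2}+\|x_{c}'-\circ_{c}-t\mathcal{F}_{s}\|_{\tau}^{2}}
\]
gives, as $t\to\infty$,
\[
\lim_{t\to\infty}\bigl(t\|\mathcal{F}_{s}\|_{\tau}-d_{\tau}(x',\circ+t\mathcal{F}_{s})\bigr)=\frac{\langle x_{c}'-\circ_{c},\mathcal{F}_{s}\rangle_{\tau}}{\|\mathcal{F}_{s}\|_{\tau}},
\]
hence $\langle\overrightarrow{\circ x'},\mathcal{F}_{s}\rangle_{\tau}=\langle x_{c}'-\circ_{c},\mathcal{F}_{s}\rangle_{\tau}$. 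This is $\mathbb{R}$-linear, thus affine, in $\mathcal{F}_{s}$, so the common value $\langle\overrightarrow{\circ x},\mathcal{F}_{s}\rangle_{\tau}$ is affine in $s$ and the desired concavity inequality holds (as an equality).

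The main obstacle is the choice of torus in the first step: over the possibly non-algebraically-closed field~$k$, the existence of a common vectorial $k$-apartment of $\mathbf{F}(G,k)$ containing two prescribed $k$-rational points is a genuine building-theoretic input and is not a formal consequence of the modular lattice axiom~\cite[Theorem 363]{Gr11} applied to $\mathrm{Sub}(\omega_{G,k}(\tau))$, whose apartments need not come from reductive subgroups of $G$. Granted this input, Steps~2 and~3 are direct adaptations of the technology already developed in the proof of Proposition~\ref{prop:CompBusemProj} and in~\cite[Chapter 5]{Co14}.
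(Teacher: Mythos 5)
Your proposal aims too high: it would show that $\mathcal{F}\mapsto\left\langle \overrightarrow{\circ x},\mathcal{F}\right\rangle _{\tau}$ is \emph{affine} along the segment $\{\mathcal{F}_{s}\}$, whereas the corollary only asserts concavity, and in fact affinity is false. By the formula of section~\ref{sub:Formula4Busemann}, for $G=GL(V)$ one has $\left\langle \overrightarrow{\circ x},f\right\rangle _{\tau}=\left\langle \alpha,f\right\rangle =\sum_{\gamma}\gamma\deg\Gr_{f}^{\gamma}(\alpha)$, which is a degree function on $\mathbf{F}(V)$ and is linear only on closed chambers, not on whole apartments. So any proof that deduces affinity on an apartment must be wrong somewhere.

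The error sits precisely where you write ``Because $\mathcal{F}_{s}$ is central in $L$, the parabolic $P_{\mathcal{F}_{s}}$ contains $P$, hence $u\in R^{u}P(K)\subset R^{u}P_{\mathcal{F}_{s}}(K)$.'' There are two independent problems. First, centrality of $\mathcal{F}_{s}$ in $L=Z_{G}(S)$ only gives $L\subset P_{\mathcal{F}_{s}}$; to get $P\subset P_{\mathcal{F}_{s}}$ you need $\left\langle \alpha,\mathcal{F}_{s}\right\rangle \geq0$ for every root $\alpha$ of $R^{u}P$, i.e.\ $\mathcal{F}_{s}$ must lie in the closed chamber attached to $P$. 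The segment from $\mathcal{F}_{0}$ to $\mathcal{F}_{1}$ in $X_{*}(S)_{\mathbb{R}}$ crosses chamber walls in general, so no single $P$ (hence no single retraction $r$, no single $u$) works uniformly in $s$. Second, even granting $P\subset P_{\mathcal{F}_{s}}$, the implication $R^{u}P\subset R^{u}P_{\mathcal{F}_{s}}$ has the inclusion the wrong way round: for parabolics $P\subset P'$ one has $R^{u}P'\subset R^{u}P$. What you actually need is $\mathcal{F}_{s}$ in the \emph{open} chamber so that $P_{\mathcal{F}_{s}}=P$ with $L_{\mathcal{F}_{s}}=L$; on chamber walls (and in particular at any crossing of the segment) this fails. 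Within a single closed chamber the computation does reduce the Busemann product to a linear functional of $\mathcal{F}_{s}$, so what your argument really yields is piecewise linearity along the segment; the concavity one wants is exactly the information about what happens at the breakpoints, which the argument as written suppresses.

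The paper proves the corollary in a completely different and considerably shorter way: apply proposition~\ref{prop:CompBusemProj} to the diagonal embedding $\Delta:G\hookrightarrow G\times G$ with $\mathcal{H}=(\mathcal{F},\mathcal{G})$. The convex projection $p_{k}(\mathcal{H})$ onto the diagonal is the midpoint $\frac{1}{2}(\mathcal{F}+\mathcal{G})$, and the proposition's inequality, combined with additivity of the Busemann product under $\tau\boxplus\tau$ and its homogeneity, gives $\left\langle \overrightarrow{\circ x},\mathcal{F}\right\rangle _{\tau}+\left\langle \overrightarrow{\circ x},\mathcal{G}\right\rangle _{\tau}\leq\left\langle \overrightarrow{\circ x},\mathcal{F}+\mathcal{G}\right\rangle _{\tau}$ directly. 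This is the sharpest form of concavity (superadditivity), it avoids any choice of torus or parabolic, and it sidesteps the chamber decomposition entirely. You may want to internalize this ``diagonal embedding plus convex projection onto the diagonal is the midpoint'' trick: it is a robust way to turn a projection inequality into a midpoint-convexity or midpoint-concavity statement.
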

\begin{proof}
We have to show that for any $x\in\mathbf{B}^{e}(G,K)$ and $\mathcal{F},\mathcal{G}\in\mathbf{F}(G,k)$,
\[
\left\langle \overrightarrow{\circ x},\mathcal{F}\right\rangle _{\tau}+\left\langle \overrightarrow{\circ x},\mathcal{G}\right\rangle _{\tau}\leq\left\langle \overrightarrow{\circ x},\mathcal{F}+\mathcal{G}\right\rangle _{\tau}.
\]
For the diagonal embedding $\Delta:G\hookrightarrow G\times G$, the
proposition gives
\[
\left\langle \overrightarrow{\circ x},\mathcal{H}\right\rangle _{\tau\boxplus\tau}\leq\left\langle \overrightarrow{\circ x},p_{k}(\mathcal{H})\right\rangle _{\tau\oplus\tau}=2\left\langle \overrightarrow{\circ x},p_{k}(\mathcal{H})\right\rangle _{\tau}
\]
for every $\mathcal{H}$ in $\mathbf{F}(G\times G,k)=\mathbf{F}(G,k)\times\mathbf{F}(G,k)$.
For $\mathcal{H}=(\mathcal{F},\mathcal{G})$, we have 
\[
\left\langle \overrightarrow{\circ x},\mathcal{H}\right\rangle _{\tau\boxplus\tau}=\left\langle \overrightarrow{\circ x},\mathcal{F}\right\rangle _{\tau}+\left\langle \overrightarrow{\circ x},\mathcal{G}\right\rangle _{\tau}
\]
and $p_{k}(\mathcal{H})$ is the point closest to $(\mathcal{F},\mathcal{G})$
in the diagonally embedded $\mathbf{F}(G,k)$: the middle point $\frac{1}{2}(\mathcal{F}+\mathcal{G})=\frac{1}{2}\mathcal{F}+\frac{1}{2}\mathcal{G}$
of the geodesic segment $[\mathcal{F},\mathcal{G}]$ of $\mathbf{F}(G,k)$.
Thus
\[
\left\langle \overrightarrow{\circ x},\mathcal{F}\right\rangle _{\tau}+\left\langle \overrightarrow{\circ x},\mathcal{G}\right\rangle _{\tau}\leq2\left\langle \overrightarrow{\circ x},{\textstyle \frac{1}{2}}(\mathcal{F}+\mathcal{G})\right\rangle _{\tau}=\left\langle \overrightarrow{\circ x},\mathcal{F}+\mathcal{G}\right\rangle _{\tau},
\]
which proves the corollary. 
\end{proof}

\subsubsection{~\label{subsec:Formula4Busemann}}

For $\mathcal{V}\in\Vect_{K}$ and for the canonical metric on $\mathbf{F}(\mathcal{V})$,
there is an explicit formula for the corresponding Busemann scalar
product
\[
\left\langle -,-\right\rangle :\mathbf{B}(\mathcal{V})^{2}\times\mathbf{F}(\mathcal{V})\rightarrow\mathbb{R}.
\]
which maps $(\alpha,\beta,\mathcal{F})$ to 
\[
\left\langle \overrightarrow{\alpha\beta},\mathcal{F}\right\rangle =\left\Vert \mathcal{F}\right\Vert \cdot\lim_{t\rightarrow\infty}\left(d(\alpha,\gamma+t\mathcal{F})-d(\beta,\gamma+t\mathcal{F})\right).
\]
By~\cite[6.4.15]{Co14}, the latter may indeed be computed as 
\[
\left\langle \overrightarrow{\alpha\beta},\mathcal{F}\right\rangle =\sum_{\gamma}\gamma\,\nu\left(\Gr_{\mathcal{F}}^{\gamma}(\alpha),\Gr_{\mathcal{F}}^{\gamma}(\beta)\right)
\]
where $\Gr_{\mathcal{F}}^{\gamma}(\alpha)$ and $\Gr_{\mathcal{F}}^{\gamma}(\beta)$
are the splittable $K$-norms on $\Gr_{\mathcal{F}}^{\gamma}(\mathcal{V})$
induced by $\alpha$ and $\beta$. If $\mathcal{V}=V_{K}$ and $\mathcal{F}=f_{K}$
for some $V\in\Vect_{k}$ and $f\in\mathbf{F}(V)$, then $\Gr_{\mathcal{F}}^{\gamma}(\mathcal{V})$
equals $\Gr_{f}^{\gamma}(V)\otimes_{k}K$; if moreover $\alpha$ is
the gauge norm of $V\otimes_{k}\mathcal{O}$, then $\Gr_{\mathcal{F}}^{\gamma}(\alpha)$
is the gauge norm of $\Gr_{f}^{\gamma}(V)\otimes_{k}\mathcal{O}$.
In particular, the pairing of section~\ref{subsec:CarHNQuAb}, 
\[
\left\langle -,-\right\rangle :\Norm_{k}^{K}(V)\times\mathbf{F}(V)\rightarrow\mathbb{R},\quad\left\langle \alpha,f\right\rangle =\sum\gamma\,\deg\Gr_{f}^{\gamma}(\alpha)
\]
is related to the Busemann scalar product by the formula
\[
\left\langle \alpha,f\right\rangle =\left\langle \overrightarrow{\alpha_{V\otimes\mathcal{O}}\alpha},f_{K}\right\rangle .
\]

\subsubsection{~}

The previous formula yields another proof of corollary~\ref{cor:ConcaveBusemann},
which now works without any assumption on the extension $\ell$ of
$k$: for every $x\in\mathbf{B}^{e}(G,K)$, the function $\mathcal{F}\mapsto\left\langle \overrightarrow{\circ x},\mathcal{F}\right\rangle _{\tau}$
is concave on $\mathbf{F}(G,k)$ since for $\alpha:=\boldsymbol{\alpha}(x)\in\mathbf{B}(\omega_{G},K)$,
\[
\left\langle \overrightarrow{\circ x},\mathcal{F}\right\rangle _{\tau}=\left\langle \overrightarrow{\circ(\tau)x(\tau)},\mathcal{F}(\tau)\right\rangle =\left\langle \alpha(\tau),\mathcal{F}(\tau)\right\rangle 
\]
and $f\mapsto\left\langle \alpha(\tau),f\right\rangle $ is a degree
function on $\mathbf{F}(\omega_{G,k}(\tau))$. If $\ell$ is a separable
extension of $k$, proposition~\ref{prop:CompBusemProj} implies
that every $\alpha\in\mathbf{B}(\omega_{G},K)$ is good. On the other
hand for every pair of objects $(V_{1},\alpha_{1})$ and $(V_{2},\alpha_{2})$
in $\Norm_{k}^{K}$ and $G:=GL(V_{1})\times GL(V_{2})$, 
\[
\mathbf{B}^{e}(G,K)\simeq\mathbf{B}(\omega_{G},K)\simeq\mathbf{B}(V_{1,K})\times\mathbf{B}(V_{2,K})
\]
contains $(\alpha_{1},\alpha_{2})$, therefore $(\Norm_{k}^{K},\deg)$
is then also good. We obtain:
\begin{thm}
Suppose that $\ell$ is a separable extension of $k$. Then
\[
\mathcal{F}_{HN}:\Norm_{k}^{K}\rightarrow\Fil_{k}\mbox{ is a \ensuremath{\otimes}-functor}.
\]
For every $\alpha\in\mathbf{B}(\omega_{G},K)$, $\mathcal{F}_{HN}(\alpha):=\mathcal{F}_{HN}\circ\alpha$
belongs to $\mathbf{F}(G,k)$, i.e.
\[
\mathcal{F}_{HN}(\alpha):\Rep(G)\rightarrow\Fil_{k}\mbox{\,\ is an exact \ensuremath{\otimes}-functor.}
\]
For any faithful representation $\tau$ of $G$ and $x\in\mathbf{B}^{e}(G,K)$,
$\pi_{G}(x):=\mathcal{F}_{HN}(\boldsymbol{\alpha}(x))$ is the unique
element $\mathcal{F}$ of $\mathbf{F}(G,k)$ which satisfies the following
equivalent conditions:

\begin{enumerate}
\item For every $f\in\mathbf{F}(G,k)$, $\left\Vert \mathcal{F}\right\Vert _{\tau}^{2}-2\left\langle \overrightarrow{\circ x},\mathcal{F}\right\rangle _{\tau}\leq\left\Vert f\right\Vert _{\tau}^{2}-2\left\langle \overrightarrow{\circ x},f\right\rangle _{\tau}$.
\item For every $f\in\mathbf{F}(G,k)$, \textup{$\left\langle \overrightarrow{\circ x},f\right\rangle _{\tau}\leq\left\langle \mathcal{F},f\right\rangle _{\tau}$}\textup{\emph{
with equality for $f=\mathcal{F}$. }}
\item For every $\gamma\in\mathbb{R}$, $\Gr_{\mathcal{F}}^{\gamma}(\boldsymbol{\alpha}(x))(\tau)$
is semi-stable of slope $\gamma$.
\end{enumerate}
The function $x\mapsto\pi_{G}(x)$ is non-expanding for $d_{\tau}$
and defines a retraction 
\[
\pi:\mathbf{B}^{e}(-,K)\twoheadrightarrow\mathbf{F}(-,k)
\]
of the embedding $\mathbf{F}(-,k)\hookrightarrow\mathbf{B}^{e}(-,K)$
of functors from $\Red(k)$ to $\Top$.
\end{thm}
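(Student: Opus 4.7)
The plan is to apply Proposition~\ref{prop:CaractGood} to each $\alpha=\boldsymbol{\alpha}(x)$ with $x\in\mathbf{B}^{e}(G,K)$, thereby setting $\pi_{G}(x):=\mathcal{F}_{HN}(\boldsymbol{\alpha}(x))$ and reading off the three equivalent conditions of the theorem. The only nontrivial input is the \emph{goodness} hypothesis of that proposition: for every faithful representation $\tau\colon G\hookrightarrow GL(V)$ and every $f\in\mathbf{F}(V)=\mathbf{F}(GL(V),k)$ with convex projection $p_{k}(f)\in\mathbf{F}(G,k)$, I must show
\[
\left\langle \alpha(\tau),f\right\rangle \leq \left\langle \alpha(\tau),p_{k}(f)\right\rangle .
\]
The formula of~\ref{sub:Formula4Busemann} identifies the left-hand side with the Busemann scalar product $\langle\overrightarrow{\circ x},f\rangle$ on $\mathbf{B}^{e}(GL(V),K)\simeq\mathbf{B}(V_{K})$, where $x$ is viewed inside this larger building through $\tau$. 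Proposition~\ref{prop:CompBusemProj}, applied with ambient group $GL(V)$ and reductive subgroup $\tau(G)$, delivers exactly this inequality; this is the one step resting on the separability of $\ell/k$, and it is the main obstacle of the whole argument.

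Proposition~\ref{prop:CaractGood} then yields that $\mathcal{F}_{HN}(\alpha)=\mathcal{F}_{HN}\circ\alpha\colon\Rep(G)\to\F(\Vect_{k})=\Fil_{k}$ is an exact $\otimes$-functor, so $\pi_{G}(x)\in\mathbf{F}(G,k)$, and it is uniquely characterized by conditions $(1)$--$(3)$ of the theorem (where $(3)$ is the translation of the lattice-theoretic semi-stability of~\ref{sub:CarHNQuAb} back into $\Norm_{k}^{K}$). For the $\otimes$-functoriality of $\mathcal{F}_{HN}\colon\Norm_{k}^{K}\to\Fil_{k}$ I invoke Corollary~\ref{cor:GoodIpliesTensor}: given $(V_{1},\alpha_{1})$ and $(V_{2},\alpha_{2})$ in $\Norm_{k}^{K}$, the point $(\alpha_{1},\alpha_{2})$ of $\mathbf{B}(V_{1,K})\times\mathbf{B}(V_{2,K})\simeq\mathbf{B}^{e}(G',K)$ with $G'=GL(V_{1})\times GL(V_{2})$ produces via $\boldsymbol{\alpha}$ a good exact $\otimes$-functor $\Rep(G')\to\Norm_{k}^{K}$ whose evaluations at $\tau_{1}\boxtimes 1_{2}$ and $1_{1}\boxtimes\tau_{2}$ recover $(V_{1},\alpha_{1})$ and $(V_{2},\alpha_{2})$, so this pair is good.

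It remains to establish the retraction, non-expansion, and functoriality of $\pi$. For the retraction property, given $\mathcal{G}\in\mathbf{F}(G,k)$ the equality case of the inequality $\langle\overrightarrow{\circ x},\mathcal{F}\rangle_{\tau}\leq\langle\loc(x),\mathcal{F}\rangle_{\tau}$ noted just before Proposition~\ref{prop:CompBusemProj} gives $\langle\overrightarrow{\circ(\circ+\mathcal{G})},f\rangle_{\tau}=\langle\mathcal{G},f\rangle_{\tau}$ for every $f\in\mathbf{F}(G,k)$, so $\mathcal{F}=\mathcal{G}$ trivially satisfies $(2)$, hence $\pi_{G}(\circ+\mathcal{G})=\mathcal{G}$. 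For non-expansion, write $\mathcal{F}_{x}=\pi_{G}(x)$ and $\mathcal{F}_{y}=\pi_{G}(y)$; condition~$(2)$ applied to $x$ and to $y$ gives
\[
\langle\overrightarrow{\circ x},\mathcal{F}_{y}\rangle_{\tau}\leq\langle\mathcal{F}_{x},\mathcal{F}_{y}\rangle_{\tau},\quad\langle\overrightarrow{\circ y},\mathcal{F}_{x}\rangle_{\tau}\leq\langle\mathcal{F}_{y},\mathcal{F}_{x}\rangle_{\tau},
\]
together with $\langle\overrightarrow{\circ x},\mathcal{F}_{x}\rangle_{\tau}=\|\mathcal{F}_{x}\|_{\tau}^{2}$ and its analogue at $y$. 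Combining these with $d_{\tau}(\mathcal{F}_{x},\mathcal{F}_{y})^{2}=\|\mathcal{F}_{x}\|_{\tau}^{2}+\|\mathcal{F}_{y}\|_{\tau}^{2}-2\langle\mathcal{F}_{x},\mathcal{F}_{y}\rangle_{\tau}$ and the additivity of Busemann in the base variables leads to
\[
d_{\tau}(\mathcal{F}_{x},\mathcal{F}_{y})^{2}\leq\langle\overrightarrow{xy},\mathcal{F}_{y}\rangle_{\tau}-\langle\overrightarrow{xy},\mathcal{F}_{x}\rangle_{\tau},
\]
and the $d_{\tau}(x,y)$-Lipschitz property of Busemann in its third variable closes the estimate. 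Functoriality of $G\mapsto\pi_{G}$ is then automatic from $\pi_{G}=\mathcal{F}_{HN}\circ\boldsymbol{\alpha}$ and the functoriality of $\boldsymbol{\alpha}$: for $\phi\colon G_{1}\to G_{2}$ and $\sigma\in\Rep(G_{2})$, both $\pi_{G_{2}}(\phi_{\ast}x)(\sigma)$ and $\phi_{\ast}(\pi_{G_{1}}(x))(\sigma)$ evaluate to $\mathcal{F}_{HN}(\alpha(\phi^{\ast}\sigma))$.
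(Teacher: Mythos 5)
Your proposal is correct and follows essentially the same route as the paper: goodness of each $\boldsymbol{\alpha}(x)$ via the Busemann formula of~\ref{sub:Formula4Busemann} together with proposition~\ref{prop:CompBusemProj} applied to $\tau(G)\subset GL(V)$, the $\otimes$-functoriality via corollary~\ref{cor:GoodIpliesTensor} with $GL(V_{1})\times GL(V_{2})$, the characterization via proposition~\ref{prop:CaractGood}, and the identical chain of inequalities (concavity at $\mathcal{F}_{x},\mathcal{F}_{y}$ plus additivity and the Lipschitz property of the Busemann product) for non-expansion. Your explicit verification of functoriality from $\pi_{G}=\mathcal{F}_{HN}\circ\boldsymbol{\alpha}$ is a slight elaboration of what the paper dismisses as "plainly functorial", but it is the same argument.
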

\begin{proof}
Everything follows from proposition~\ref{prop:CaractGood} except
the last sentence, which still requires a proof. For $x,y\in\mathbf{B}^{e}(G,K)$,
set $\mathcal{F}:=\pi_{G}(x)$ and $\mathcal{G}:=\pi_{G}(y)$. Then
\begin{eqnarray*}
d_{\tau}(\mathcal{F},\mathcal{G})^{2} & = & \left\Vert \mathcal{F}\right\Vert _{\tau}^{2}+\left\Vert \mathcal{G}\right\Vert _{\tau}^{2}-\left\langle \mathcal{F},\mathcal{G}\right\rangle _{\tau}-\left\langle \mathcal{G},\mathcal{F}\right\rangle _{\tau}\\
 & \leq & \left\langle \overrightarrow{\circ x},\mathcal{F}\right\rangle _{\tau}+\left\langle \overrightarrow{\circ y},\mathcal{G}\right\rangle _{\tau}-\left\langle \overrightarrow{\circ x},\mathcal{G}\right\rangle _{\tau}-\left\langle \overrightarrow{\circ y},\mathcal{F}\right\rangle _{\tau}\\
 & = & \left\langle \overrightarrow{xy},\mathcal{G}\right\rangle _{\tau}-\left\langle \overrightarrow{xy},\mathcal{F}\right\rangle _{\tau}\\
 & \leq & d_{\tau}(x,y)\cdot d_{\tau}(\mathcal{F},\mathcal{G})
\end{eqnarray*}
thus $d_{\tau}(\mathcal{F},\mathcal{G})\leq d_{\tau}(x,y)$, i.e.~$\pi_{G}:\mathbf{B}^{e}(G,K)\rightarrow\mathbf{F}(G,k)$
is indeed non-expanding for $d_{\tau}$. It is plainly functorial
in $G$. For $\mathcal{F},f\in\mathbf{F}(G,k)$ and $x:=\circ+\mathcal{F}$,
we have 
\[
\left\langle \overrightarrow{\circ x},f\right\rangle _{\tau}=\left\langle \mathcal{F},f\right\rangle _{\tau}
\]
thus $\pi_{G}(x)=\mathcal{F}$, i.e.~$\pi$ is indeed a retraction
of $\mathbf{F}(-,k)\hookrightarrow\mathbf{B}^{e}(-,K)$.
\end{proof}
\noindent Once we know that the projection $\pi_{G}:\mathbf{B}^{e}(G,K)\twoheadrightarrow\mathbf{F}(G,k)$
computes the Harder-Narasimhan filtrations, the compatibility of the
latter with tensor product constructions again directly follows from
the functoriality of $G\mapsto\pi_{G}$:
\begin{prop}
The Harder-Narasimhan functor $\mathcal{F}_{HN}:\Norm_{k}^{K}\rightarrow\Fil_{k}$
is compatible with tensor products, symmetric and exterior powers,
and duals.
\end{prop}
\begin{proof}
Apply the functoriality of $G\mapsto\pi_{G}$ to $GL(V_{1})\times GL(V_{2})\rightarrow GL(V_{1}\otimes V_{2})$,
$GL(V)\rightarrow GL(\Sym^{r}V)$, $GL(V)\rightarrow GL(\Lambda^{r}V)$
and $GL(V)\rightarrow GL(V^{\ast})$.
\end{proof}
\begin{rem}
We now have three non-expanding retractions of $\mathbf{F}(-,k)\hookrightarrow\mathbf{B}^{e}(-,K)$:
$(1)$ the composition $\pi\circ\loc$ where $\pi:\mathbf{F}(-,\ell)\twoheadrightarrow\mathbf{F}(-,k)$
is the convex projection from~theorem~\ref{thm:FunctFil}, which
computes the Harder-Narasimhan filtration on $\Fil_{k}^{\ell}$\emph{;
$(2)$ }the convex projection $\omega:\mathbf{B}^{e}(-,K)\twoheadrightarrow\mathbf{F}(-,k)$
from theorem~\ref{thm:FunctFakeHN}; $(3)$ the retraction $\pi:\mathbf{B}^{e}(-,K)\twoheadrightarrow\mathbf{F}(-,k)$
that we have just defined, which computes the Harder-Narasimhan filtration
on $\Norm_{k}^{K}$. We leave it to the reader to verify that already
for $G=PGL(2)$, these three retractions are pairwise distinct.
\end{rem}

\subsection{Normed $\varphi$-modules\label{subsec:NormedPhiModules}}

\subsubsection{~}

Let $k=\mathbb{F}_{q}$ be a finite field, $K$ an extension of $k$,
$\left|-\right|:K\rightarrow\mathbb{R}_{+}$ a non-archimedean absolute
value such that the local $k$-algebra $\mathcal{O}=\left\{ x\in K:\left|x\right|\leq1\right\} $
is Henselian with residue field $\ell$, $K^{s}$ a fixed separable
closure of $K$ with Galois group $\Gal_{K}=\Gal(K^{s}/K)$. The category
$\Rep_{k}(\Gal_{K})$ of continuous (i.e.~with open kernels) representations
$(V,\rho)$ of $\Gal_{K}$ on finite dimensional $k$-vector spaces
is a $k$-linear neutral tannakian category which is equivalent to
the category $\Vect_{K}^{\varphi}$ of étale $\varphi$-modules $(\mathcal{V},\varphi_{\mathcal{V}})$
over $K$. Here $\varphi(x)=x^{q}$ is the Frobenius of $K$, $\mathcal{V}$
is a finite dimensional $K$-vector space and $\varphi_{\mathcal{V}}:\varphi^{\ast}\mathcal{V}\rightarrow\mathcal{V}$
is a $K$-linear isomorphism where $\varphi^{\ast}\mathcal{V}=\mathcal{V}\otimes_{K,\varphi}K$.
The equivalence of categories is given by 
\begin{eqnarray*}
(V,\rho) & \rightarrow & \left((V\otimes_{k}K^{s})^{\Gal_{K}},\mathrm{Id}_{V}\otimes\varphi\right)\\
\left(\left(\mathcal{V}\otimes_{K}K^{s}\right)^{\varphi_{\mathcal{V}}\otimes\varphi=\mathrm{Id}},\gamma\mapsto\mathrm{Id}\otimes\gamma\right) & \leftarrow & (\mathcal{V},\varphi_{V})
\end{eqnarray*}

\subsubsection{~\label{subsec:CatNorm^phi_K}}

We denote by $\Norm_{K}^{\varphi}$ the quasi-abelian $k$-linear
$\otimes$-category of all triples $(\mathcal{V},\varphi_{\mathcal{V}},\alpha)$
where $(\mathcal{V},\varphi_{\mathcal{V}})$ is an étale $\varphi$-module
and $\alpha$ is a splittable $K$-norm on $\mathcal{V}$, with the
obvious morphisms and $\otimes$-products. It comes with two exact
$\otimes$-functors 
\[
\Norm_{K}^{\varphi}\rightarrow\Norm_{K},\quad(\mathcal{V},\varphi_{\mathcal{V}},\alpha)\mapsto(\mathcal{V},\alpha)\mbox{ or }(\mathcal{V},\varphi_{\mathcal{V}}(\alpha))
\]
where $\varphi_{\mathcal{V}}(\alpha)$ is the splittable $K$-norm
on $\mathcal{V}$ defined by 
\begin{eqnarray*}
(\varphi_{\mathcal{V}}(\alpha))(v) & \eqd & (\varphi^{\ast}\alpha)(\varphi_{\mathcal{V}}^{-1}(v))\\
\mbox{with}\quad(\varphi^{\ast}\alpha)(v') & \eqd & \min\left\{ \max\left\{ \left|\lambda_{i}\right|\alpha(v_{i})^{q}\right\} :\begin{array}{l}
v'=\sum v_{i}\otimes\lambda_{i}\\
\lambda_{i}\in K,\,v_{i}\in V
\end{array}\right\} 
\end{eqnarray*}
for $v\in\mathcal{V}$ and $v'\in\varphi^{\ast}\mathcal{V}:=\mathcal{V}\otimes_{K,\varphi}K$.
Note that for $\alpha,\beta\in\mathbf{B}(\mathcal{V})$,
\[
\mathbf{d}(\varphi_{\mathcal{V}}(\alpha),\varphi_{\mathcal{V}}(\beta))=q\cdot\mathbf{d}(\alpha,\beta)\in\mathbb{R}_{\geq}^{r}\quad\mbox{and}\quad\nu(\varphi_{\mathcal{V}}(\alpha),\varphi_{\mathcal{V}}(\beta))=q\cdot\nu(\alpha,\beta)\in\mathbb{R}.
\]

\subsubsection{~}

We may then consider the following setup:
\[
\begin{array}{rcl}
\A & = & \Rep_{k}(\Gal_{K})\\
\C & = & \Norm_{K}^{\varphi}
\end{array}\quad\mbox{with}\quad\left\{ \begin{array}{rcl}
\omega(\mathcal{V},\varphi_{\mathcal{V}},\alpha) & = & \left(\mathcal{V}\otimes_{K}K^{s}\right)^{\varphi_{\mathcal{V}}\otimes\varphi=\mathrm{Id}},\\
\rank(V,\rho) & = & \dim_{k}V,\\
\deg(\mathcal{V},\varphi_{\mathcal{V}},\alpha) & = & \nu(\alpha,\varphi_{\mathcal{V}}(\alpha)).
\end{array}\right.
\]
These data again satisfy the assumptions of sections~\ref{subsec:HypOnA}-\ref{subsec:HypOnC}.
For instance, if 
\[
f:(\mathcal{V}_{1},\varphi_{1},\alpha_{1})\rightarrow(\mathcal{V}_{2},\varphi_{2},\alpha_{2})
\]
is a mono-epi in $\Norm_{K}^{\varphi}$, then $f:(\mathcal{V}_{1},\varphi_{1})\rightarrow(\mathcal{V}_{2},\varphi_{2})$
is an isomorphism and 
\begin{eqnarray*}
\nu\left(\alpha_{1},\varphi_{1}(\alpha_{1})\right) & = & \nu\left(f_{\ast}(\alpha_{1}),f_{\ast}(\varphi_{1}(\alpha_{1}))\right)\\
 & = & \nu(f_{\ast}(\alpha_{1}),\alpha_{2})+\nu(\alpha_{2},\varphi_{2}(\alpha_{2}))+\nu(\varphi_{2}(\alpha_{2}),\varphi_{2}(f_{\ast}(\alpha_{1})))\\
 & = & \nu(\alpha_{2},\varphi_{2}(\alpha_{2}))-(q-1)\nu(f_{\ast}(\alpha_{1}),\alpha_{2})
\end{eqnarray*}
where $f_{\ast}(\alpha)(x)=\alpha\circ f^{-1}(x)$, so that $f_{\ast}(\varphi_{1}(\alpha_{1}))=\varphi_{2}(f_{\ast}(\alpha_{1}))$,
thus 
\[
\deg(\mathcal{V}_{1},\varphi_{1},\alpha_{1})\leq\deg(\mathcal{V}_{1},\varphi_{1},\alpha_{1})
\]
with equality if and only if $f_{\ast}(\alpha_{1})=\alpha_{2}$. We
thus obtain a HN-formalism on $\Norm_{K}^{\varphi}$. 

We will show that for any reductive group $G$ over $k$, any faithful
exact $\otimes$-functor $\Rep(G)\rightarrow\Norm_{K}^{\varphi}$
is good, and the pair $(\Norm_{K}^{\varphi},\deg)$ itself is good.
In particular, the corresponding HN-filtration on $\Norm_{K}^{\varphi}$
is a $\otimes$-functor 
\[
\mathcal{F}_{HN}:\Norm_{K}^{\varphi}\rightarrow\F\left(\Rep_{k}\left(\Gal_{K}\right)\right).
\]

\subsubsection{~}

Since $\mathcal{O}$ is Henselian, the absolute value of $K$ has
a unique extension to $K^{s}$, which we also denote by $\left|-\right|:K^{s}\rightarrow\mathbb{R}_{+}$.
The corresponding valuation ring $\mathcal{O}^{s}:=\left\{ x\in K^{s}:\left|x\right|\leq1\right\} $
is the integral closure of $\mathcal{O}$ in $K^{s}$, and it is a
strictly Henselian local ring. There is a commutative diagram of $\otimes$-functors
\[
\xymatrix{\A=\Rep_{k}(\Gal_{K})\ar[d]^{\mathrm{forget}\,\rho}\ar@{<->}[r] & \Vect_{K}^{\varphi}\ar[d]^{-\otimes_{K}K^{s}} & \Norm_{K}^{\varphi}=\C\ar[d]^{-\otimes_{K}K^{s}}\ar[l]\ar[r] & \Norm_{K}\ar[d]^{-\otimes_{K}K^{s}}\\
\A^{s}=\Vect_{k}\ar@{<->}[r] & \Vect_{K^{s}}^{\varphi} & \Norm_{K^{s}}^{\varphi}=\C^{s}\ar[l]\ar[r] & \Norm_{K^{s}}
}
\]
in which the horizontal functors are equivalence of categories in
the first square, forget the norms in the second square, and map $(\mathcal{V},\varphi_{\mathcal{V}},\alpha)$
to either $(\mathcal{V},\alpha)$ or $(\mathcal{V},\varphi_{\mathcal{V}}(\alpha))$
in the third square. The last vertical functor maps $(\mathcal{V},\alpha)$
to $(\mathcal{V}^{s},\alpha^{s})$ with 
\[
\mathcal{V}^{s}\eqd\mathcal{V}\otimes_{K}K^{s}\quad\mbox{and}\quad\alpha^{s}(v)\eqd\min\left\{ \max\left\{ \left|\lambda_{i}\right|\alpha(v_{i}):i\right\} \left|\begin{array}{l}
v=\sum v_{i}\otimes\lambda_{i}\\
v_{i}\in\mathcal{V},\,\lambda_{i}\in K^{s}
\end{array}\right.\right\} .
\]
By~\cite[Lemma 132]{Co14}, there is an extension $(K',\left|-\right|)$
of $(K^{s},\left|-\right|)$ with $K'$ algebraically closed (in which
case $\mathcal{O}':=\left\{ x\in K':\left|x\right|\leq1\right\} $
is strictly Henselian) and $\left|K'\right|=\mathbb{R}$. We may then
add a third row to our commutative diagram,
\[
\xymatrix{\A^{s}=\Vect_{k}\ar@{=}[d]\ar@{<->}[r] & \Vect_{K^{s}}^{\varphi}\ar[d]^{-\otimes_{K^{s}}K^{\prime}} & \Norm_{K^{s}}^{\varphi}=\C^{s}\ar[d]^{-\otimes_{K^{s}}K^{\prime}}\ar[l]\ar[r] & \Norm_{K^{s}}\ar[d]^{-\otimes_{K^{s}}K^{\prime}}\\
\A^{\prime}=\Vect_{k}\ar@{<->}[r] & \Vect_{K^{\prime}}^{\varphi} & \Norm_{K^{\prime}}^{\varphi}=\C^{\prime}\ar[l]\ar[r] & \Norm_{K^{\prime}}
}
\]

\subsubsection{~}

Let now $G$ be a reductive group over $k$ and let $x:\Rep(G)\rightarrow\Norm_{K}^{\varphi}$
be a faithful exact $k$-linear $\otimes$-functor, with base change
\[
x^{s}:\Rep(G)\rightarrow\Norm_{K^{s}}^{\varphi}\quad\mbox{and}\quad x^{\prime}:\Rep(G)\rightarrow\Norm_{K'}^{\varphi}
\]
and Galois representation $\omega_{G,\A}:\Rep(G)\rightarrow\Rep_{k}(\Gal_{K})$.
We denote by
\[
\omega_{G,\A}=:(V,\rho),\quad x=:(\mathcal{V},\varphi_{\mathcal{V}},\alpha),\quad x^{s}=:(\mathcal{V}^{s},\varphi_{\mathcal{V}^{s}},\alpha^{s})\quad\mbox{and}\quad x'=:(\mathcal{V}^{\prime},\varphi_{\mathcal{V}^{\prime}},\alpha^{\prime})
\]
the components of $\omega_{G,\A}$, $x$, $x^{s}$ and $x'$. Let
$\tau$ be a faithful representation of $G$ and 
\[
p:\mathbf{F}(\omega_{G,\A}(\tau))\twoheadrightarrow\mathbf{F}(\omega_{G,\A})(\tau)
\]
the projection to the image of $\mathbf{F}(\omega_{G,\A})\hookrightarrow\mathbf{F}(\omega_{G,\A}(\tau))$.
We want to show that 
\[
\left\langle x(\tau),f\right\rangle \leq\left\langle x(\tau),p(f)\right\rangle 
\]
for every $f\in\mathbf{F}(\omega_{G,\A}(\tau))$. As in~\ref{subsec:Formula4Busemann},
this amounts to 
\[
\left\langle \overrightarrow{\alpha(\tau)\varphi_{\mathcal{V}(\tau)}(\alpha(\tau))},\mathcal{F}\right\rangle \leq\left\langle \overrightarrow{\alpha(\tau)\varphi_{\mathcal{V}(\tau)}(\alpha(\tau))},\mathcal{G}\right\rangle 
\]
for the Busemann scalar product on $\mathbf{B}(\mathcal{V}(\tau))$,
where $\mathcal{F}$ and $\mathcal{G}$ are the $\varphi_{\mathcal{V}(\tau)}$-stable
filtrations on $\mathcal{V}(\tau)$ corresponding to the $\Gal_{K}$-stable
filtrations $f$ and $p(f)$ on $V(\tau)$. Since the CAT(0)-spaces
$\mathbf{B}(\mathcal{V}(\tau)\otimes-)$ are functorial on $\HV(k)$,
this amounts to
\begin{eqnarray*}
\left\langle \overrightarrow{\alpha^{s}(\tau)\varphi_{\mathcal{V}^{s}(\tau)}(\alpha^{s}(\tau))},\mathcal{F}^{s}\right\rangle  & \leq & \left\langle \overrightarrow{\alpha^{s}(\tau)\varphi_{\mathcal{V}^{s}(\tau)}(\alpha^{s}(\tau))},\mathcal{G}^{s}\right\rangle \\
\mbox{or}\quad\left\langle \overrightarrow{\alpha^{\prime}(\tau)\varphi_{\mathcal{V}^{\prime}(\tau)}(\alpha^{\prime}(\tau))},\mathcal{F}^{\prime}\right\rangle  & \leq & \left\langle \overrightarrow{\alpha^{\prime}(\tau)\varphi_{\mathcal{V}^{\prime}(\tau)}(\alpha^{\prime}(\tau))},\mathcal{G}^{\prime}\right\rangle 
\end{eqnarray*}
for the Busemann scalar products on $\mathbf{B}(\mathcal{V}^{s}(\tau))$
or $\mathbf{B}(\mathcal{V}^{\prime}(\tau))$, where $\mathcal{F}^{\star}$
and $\mathcal{G}^{\star}$ are the $\varphi_{\mathcal{V}^{\star}(\tau)}$-stable
filtrations on $\mathcal{V}^{\star}(\tau):=\mathcal{V}(\tau)\otimes_{K}K^{\star}=V(\tau)\otimes_{k}K^{\star}$
base changed from $\mathcal{F}$ and $\mathcal{G}$ on $\mathcal{V}(\tau)$
or equivalently, from $f$ and $p(f)$ on $V(\tau)$ (for $\star\in\{s,\prime\}$).

\subsubsection{~}

Since $k$ is finite, it follows from Lang's theorem and Deligne's
work on tannakian categories that the fiber functor $V:\Rep(G)\rightarrow\Vect_{k}$
underlying $\omega_{G,\A}$ is isomorphic to the standard fiber functor
$\omega_{G,k}:\Rep(G)\rightarrow\Vect_{k}$. Without loss of generality,
we may thus assume that $V=\omega_{G,k}$, in which case 
\[
\omega_{G,\A}:\Rep(G)\rightarrow\Rep_{k}(\Gal_{K})
\]
is induced by a morphism $\rho:\Gal_{K}\rightarrow G(k)$ with open
kernel. Then 
\[
\omega_{G,\A^{s}}=\omega_{G,\A^{\prime}}=\omega_{G,k},\quad\mathcal{V}=(\omega_{G,\A}\otimes K^{s})^{\Gal_{K}}\quad\mbox{and}\quad\mathcal{V}^{\star}=\omega_{G,K^{\star}}
\]
for $\star\in\{s,\prime\}$. Moreover, the following commutative diagram
in $\CCAT$ 
\[
\xymatrix{\mathbf{F}(\omega_{G,\A})\ar@{^{(}->}[r]\ar@{^{(}->}[d] & \mathbf{F}(\omega_{G,\A}(\tau))\ar@{^{(}->}[d]\\
\mathbf{F}(\omega_{G,k})\ar@{^{(}->}[r] & \mathbf{F}(\omega_{G,k}(\tau))
}
\]
is $G(k)$-equivariant, thus also $\Gal_{K}$-equivariant, and identifies
its first row with the $\Gal_{K}$-invariants of its second row. It
follows that the corresponding diagram of convex projections is commutative:
\[
\xymatrix{\mathbf{F}(\omega_{G,\A})(\tau)\ar@{^{(}->}[d] & \mathbf{F}(\omega_{G,\A}(\tau))\ar@{->>}[l]_{p}\ar@{^{(}->}[d]\\
\mathbf{F}(\omega_{G,k})(\tau) & \mathbf{F}(\omega_{G,k}(\tau))\ar@{->>}[l]_{p}
}
\]
It is therefore sufficient to show that for every $f\in\mathbf{F}(V(\tau))$,
\[
\left\langle \overrightarrow{\alpha^{\star}(\tau)\,\varphi_{\mathcal{V}^{\star}}(\alpha^{\star})(\tau)},f\right\rangle \leq\left\langle \overrightarrow{\alpha^{\star}(\tau)\,\varphi_{\mathcal{V}^{\star}}(\alpha^{\star})(\tau)},p(f)\right\rangle 
\]
for the Busemann scalar product on $\mathbf{B}(V(\tau)\otimes K^{\star}).$
Note that since
\[
\varphi_{\mathcal{V}^{\star}}=\mathrm{Id}\otimes\varphi\quad\mbox{on}\quad\mathcal{V}^{\star}=V\otimes_{k}K^{\star},
\]
the standard $\mathcal{O}^{\star}$-lattice $V\otimes_{k}\mathcal{O}^{\star}$
is $\varphi_{\mathcal{V}^{\ast}}$-stable, and so is the corresponding
gauge norm $\alpha_{V\otimes\mathcal{O}^{\star}}=\boldsymbol{\alpha}(\circ)$.
The additivity of the Busemann scalar product gives 
\begin{eqnarray*}
\left\langle \overrightarrow{\alpha^{\star}(\tau)\,\varphi_{\mathcal{V}^{\star}}(\alpha^{\star})(\tau))},f\right\rangle  & = & \left\langle \overrightarrow{\alpha^{\star}(\tau)\,\boldsymbol{\alpha}(\circ)(\tau)},f\right\rangle +\left\langle \overrightarrow{\boldsymbol{\alpha}(\circ)(\tau)\,\varphi_{\mathcal{V}^{\star}}(\alpha^{\star})(\tau)},f\right\rangle \\
 & = & -\left\langle \overrightarrow{\boldsymbol{\alpha}(\circ)(\tau)\,\alpha^{\star}(\tau)},f\right\rangle +\left\langle \overrightarrow{\varphi_{\mathcal{V}^{\star}}(\boldsymbol{\alpha}(\circ))(\tau)\,\varphi_{\mathcal{V}^{\star}}(\alpha^{\star})(\tau)},f\right\rangle \\
 & = & (q-1)\cdot\left\langle \overrightarrow{\boldsymbol{\alpha}(\circ)(\tau)\,\alpha^{\star}(\tau)},f\right\rangle 
\end{eqnarray*}
and similarly for $p(f)$ \textendash{} using the formulas of section~\ref{subsec:Formula4Busemann}
and~\ref{subsec:CatNorm^phi_K}. For $\star=\prime$, we also know
that $\alpha^{\prime}\in\Norm_{K^{\prime}}^{\otimes}(\omega_{G,K'})$
belongs to $\mathbf{B}(\omega_{G},K')$ by lemma~\ref{lem:B(omega)isBig},
thus 
\[
\left\langle \overrightarrow{\boldsymbol{\alpha}(\circ)(\tau)\,\alpha^{\prime}(\tau)},f\right\rangle \leq\left\langle \overrightarrow{\boldsymbol{\alpha}(\circ)(\tau)\,\alpha^{\prime}(\tau)},p(f)\right\rangle 
\]
by proposition~\ref{prop:CompBusemProj}, which indeed applies since
$k=\mathbb{F}_{q}$ is perfect.

\subsubsection{~}

We have shown that any faithful exact $\otimes$-functor $x:\Rep(G)\rightarrow\Norm_{K}^{\varphi}$
is good. Starting with a pair of objects $(\mathcal{V}_{i},\varphi_{i},\alpha_{i})$
in $\Norm_{K}^{\varphi}$ (for $i\in\{1,2\}$), with Galois representations
$\rho_{i}:\Gal_{K}\rightarrow GL(V_{i})$, set $G:=GL(V_{1})\times GL(V_{2})$
and $\rho:=(\rho_{1},\rho_{2})$. Then $\rho:\Gal_{K}\rightarrow G(k)$
induces an exact and faithful $\otimes$-functor 
\[
\Rep(G)\rightarrow\Rep_{k}(\Gal_{K})
\]
 with corresponding étale $\varphi$-module $(\mathcal{V},\varphi_{\mathcal{V}}):\Rep(G)\rightarrow\Vect_{K}^{\varphi}$
given by 
\[
\mathcal{V}(\tau)=(\omega_{G,k}(\tau)\otimes K^{s})^{\Gal_{K}}\quad\mbox{and}\quad\varphi_{\mathcal{V}(\tau)}=\mathrm{Id}\otimes\varphi\vert_{\mathcal{V}(\tau)}.
\]
In particular, $(\mathcal{V},\varphi_{\mathcal{V}})(\tau_{i}^{\prime})=(\mathcal{V}_{i},\varphi_{i})$
where $\tau_{1}^{\prime}:=\tau_{1}\boxtimes1$ and $\tau_{2}^{\prime}:=1\boxtimes\tau_{2}$
for the tautological representation $\tau_{i}$ of $GL(V_{i})$ on
$V_{i}$. We have to show that the splittable $K$-norms $\alpha_{1}$
and $\alpha_{2}$ also extend to $\alpha\in\Norm_{K}^{\otimes}(\mathcal{V})$.
Since $\mathcal{V}^{s}=\mathcal{V}\otimes_{K}K^{s}\simeq\omega_{G,K^{s}}$,
the base changed norms $\alpha_{i}^{s}$ on $\mathcal{V}_{i}^{s}=\mathcal{V}_{i}\otimes_{K}K^{s}$
plainly extend to a unique $K^{s}$-norm 
\[
\alpha^{s}=(\alpha_{1}^{s},\alpha_{2}^{s})\quad\mbox{in}\quad\begin{array}{rcccc}
\mathbf{B}(\mathcal{V}_{1}^{s})\times\mathbf{B}(\mathcal{V}_{2}^{s}) & \simeq & \mathbf{B}^{e}(G,K^{s}) & \simeq & \mathbf{B}(\omega_{G},K^{s})\\
 & \subset & \Norm_{K^{s}}^{\otimes}(\omega_{G,K^{s}}) & \simeq & \Norm_{K^{s}}^{\otimes}(\mathcal{V}^{s})
\end{array}
\]
on $\mathcal{V}^{s}:\Rep(G)\rightarrow\Vect_{K^{s}}$. For every $\tau\in\Rep(G)$,
we may then define 
\[
\alpha(\tau):\mathcal{V}(\tau)\rightarrow\mathbb{R}_{+},\quad\alpha(\tau)\eqd\alpha^{s}(\tau)\vert_{\mathcal{V}(\tau)}.
\]
Plainly, $\alpha(\tau)$ is a $K$-norm on $\mathcal{V}(\tau)$ and
$\alpha(\tau_{i}^{\prime})=\alpha_{i}^{s}\vert_{\mathcal{V}_{i}}=\alpha_{i}$
on $\mathcal{V}(\tau_{i}^{\prime})=\mathcal{V}_{i}$, which is a splittable
$K$-norm on $\mathcal{V}_{i}$. Since $\tau_{1}^{\prime}$ and $\tau_{2}^{\prime}$
are $\otimes$-generators of the tannakian category $\Rep(G)$, it
follows that $\alpha(\tau)$ is a splittable $K$-norm for every $\tau\in\Rep(G)$.
Then $\alpha:\Rep(G)\rightarrow\Norm_{K}$ indeed belongs to $\Norm_{K}^{\otimes}(\mathcal{V})$,
thus 
\[
(\mathcal{V},\varphi_{\mathcal{V}},\alpha):\Rep(G)\rightarrow\Norm_{K}^{\varphi}
\]
is a faithful exact $\otimes$-functor with $(\mathcal{V},\varphi_{\mathcal{V}},\alpha)(\tau_{i}^{\prime})=(\mathcal{V}_{i},\varphi_{i},\alpha_{i})$
for $i\in\{1,2\}$. Since it is good, the pair $(\Norm_{K}^{\varphi},\deg)$
is indeed itself good.

\subsubsection{A variant}

We may also consider the quasi-abelian $k$-linear $\otimes$-category
$\Bun_{\mathcal{O}}^{\varphi}$ of pairs $(L,\varphi_{\mathcal{V}})$
where $L$ is a finite free $\mathcal{O}$-module and $\varphi_{\mathcal{V}}:\varphi^{\ast}\mathcal{V}\rightarrow\mathcal{V}$
is a Frobenius on $\mathcal{V}:=L\otimes K$, with the obvious morphisms
and tensor products. The functor 
\[
\Bun_{\mathcal{O}}^{\varphi}\rightarrow\Norm_{K}^{\varphi},\qquad(L,\varphi_{\mathcal{V}})\mapsto(\mathcal{V},\varphi_{\mathcal{V}},\alpha_{L})
\]
is a fully faithful exact $k$-linear $\otimes$-functor, whose essential
image is stable under strict subobjects and quotients. It is thus
also compatible with the corresponding HN-formalism. In particular,
the HN-filtration is a $\otimes$-functor 
\[
\mathcal{F}_{HN}:\Bun_{\mathcal{O}}^{\varphi}\rightarrow\F(\Rep_{k}(\Gal_{K})).
\]

\bibliographystyle{plain}
\bibliography{MyBib}

\end{document}